\newcommand{\revisionone}[1]{{#1}}
\newcommand{\revisiontwo}[1]{{#1}}
\newcommand{\revisionfinal}[1]{{#1}}
\newcommand{\ignore}[1]{}
\numberwithin{equation}{section}
\theoremstyle{plain}
\newtheorem{theorem}{Theorem}[section]
\newtheorem*{theorem*}{Theorem}
\newtheorem{lemma}[theorem]{Lemma}
\newtheorem{corollary}[theorem]{Corollary}
\newtheorem{proposition}[theorem]{Proposition}
\theoremstyle{definition}
\newtheorem{example}[theorem]{Example}
\newtheorem{remark}[theorem]{Remark}
\theoremstyle{remark}
\DeclareMathOperator{\pv}{pv}
\newcommand{\D}{d}
\newcommand{\pr}{\mathbf{P}}
\newcommand{\ex}{\mathbf{E}}
\newcommand{\R}{\mathbf{R}}
\newcommand{\fourier}{\mathcal{F}}
\newcommand{\ind}{\mathbf{1}}
\newcommand{\sub}{\subseteq}
\newcommand{\ph}{\varphi}
\newcommand{\eps}{\varepsilon}
\newcommand{\tnorm}[1]{\| #1 \|}
\newcommand{\scalar}[1]{\left\langle #1 \right\rangle}
\newcommand{\tscalar}[1]{\langle #1 \rangle}
\newcommand{\abs}[1]{\left| #1 \right|}
\newcommand{\tabs}[1]{\lvert #1 \rvert}
\newcommand{\expr}[1]{\left( #1 \right)}
\newcommand{\hl}{{(0, \infty)}}
\newcommand{\thet}{\vartheta}
\newcommand{\pvint}{\pv\!\!\int}
\newcommand{\gammau}{\revisionone{\Gamma}}
\newcommand{\gammal}{\revisionone{\gamma}}
\newcommand{\formula}[2][nolabel]%
{%
 \ifthenelse{\equal{#1}{nolabel}}%
 {\begin{align*} #2 \end{align*}}%
 {%
  \ifthenelse{\equal{#1}{}}%
  {\begin{align} #2 \end{align}}%
  {\begin{align} \label{#1} \begin{aligned} #2 \end{aligned} \end{align}}%
 }%
}
\begin{document}

%
%

\title[Hitting times of points]{Hitting times of points for symmetric L\'evy processes with completely monotone jumps}
\author{Tomasz Juszczyszyn, Mateusz Kwa{\'s}nicki}
\thanks{Work supported by NCN grant no. 2011/03/D/ST1/00311}
\thanks{Mateusz Kwa{\'s}nicki was supported by fellowship co-financed by European Union within European Social Fund}
\address{Tomasz Juszczyszyn, Mateusz Kwa{\'s}nicki \\ Institute of Mathematics and Computer Science \\ Wroc{\l}aw University of Technology \\ ul. Wybrze{\.z}e Wyspia{\'n}\-skiego 27 \\ 50-370 Wroc{\l}aw, Poland}
\email{mateusz.kwasnicki@pwr.edu.pl, tomek.juszczyszyn@gmail.com}
\keywords{L\'evy process; hitting time of points; completely monotone jumps; complete Bernstein function; subordinate Brownian motion}

\begin{abstract}
Small-space and large-time estimates and asymptotic expansion of the distribution function and (the derivatives of) the density function of hitting times of points for symmetric L\'evy processes are studied. The L\'evy measure is assumed to have completely monotone density function, and a scaling-type condition $\inf \xi \Psi''(\xi) / \Psi'(\xi) > 0$ is imposed on the L\'evy--Khintchine exponent $\Psi$. Proofs are based on generalised eigenfunction expansion for processes killed upon hitting the origin.
\end{abstract}

\maketitle

%
%

\section{Introduction and statement of the results}
\label{sec:intro}

Let $X$ be a one-dimensional L\'evy process, that is, a real-valued stochastic process with stationary and independent increments, c\`adl\`ag paths, and initial value $X_0 = 0$. The process $X$ is completely characterised by its \revisiontwo{L\'evy--Khintchine exponent $\Psi$, which is given by the L\'evy--Khintchine formula:
\formula{
 \Psi(\xi) & = \revisionfinal{-\log(\ex e^{i \xi X_1})} 
 = a \xi^2 - i b \xi + \int_{\R \setminus \{0\}} (1 - e^{i \xi z} + i \xi z \ind_{(-1, 1)}(z)) \nu(\D z)
}
for $\xi \in \R$, where $a \ge 0$ is the Gaussian component, $b \in \R$ is the drift coefficient and $\nu$ is a non-negative measure such that $\int_{\R \setminus \{0\}} \min(1, z^2) \nu(dz) < \infty$, called L\'evy measure.}
 The \emph{first hitting time} of a point $x \in \R$ is defined by the formula
\formula{
 \tau_x = \inf \{ t \ge 0 : X_t = x \} .
}
In this article estimates and asymptotic formulae, in terms of the L\'evy--Khintchine exponent $\Psi$, for the tail and the density function of $\tau_x$ are derived, under a number of conditions on the process $X$.


The distribution of $\tau_x$ plays an important role in various contexts: local times and excursion theory (\cite{bib:bg64,bib:c10,bib:m76}), potential theory (\cite{bib:bg68}), penalisation problems (\cite{bib:yyy09a, bib:yyy09b}). The estimates of $\tau_x$ may also prove useful in the study of one-dimensional unimodal L\'evy processes, developed recently in~\cite{bib:bgr14a,bib:bgr14,bib:g14}. More precisely, description of $\tau_x$ is the limiting case of a more general problem of finding the time and place the process $X$ first hits a (small) ball, see~\cite{bib:bgr14}.

Surprisingly little is known about the properties of $\tau_x$ for general L\'evy processes. By~\cite[Theorem~43.3 and Remark~43.6]{bib:s99}, if $1 / \tabs{\Psi}$ is integrable at infinity, then
\formula[eq:general]{
 \int_{\R} e^{i \xi x} \ex e^{-\lambda \tau_x} \D x & = \frac{c_\lambda}{\lambda + \Psi(\xi)} \, , && \text{with} & c_\lambda & = \expr{\int_\R \frac{1}{\lambda + \Psi(\xi)} \, \D \xi}^{-1} .
}
The inversion of the Laplace and Fourier transforms in~\eqref{eq:general} is often problematic. An application of the inverse Fourier transform to both sides of~\eqref{eq:general} leads to an expression for $\ex e^{-\lambda \tau_x}$ in terms of an oscillatory integral. In fact,
\formula[eq:potential]{
 u_\lambda(x) & = c_\lambda^{-1} \ex e^{-\lambda \tau_x}
}
is a well-studied object, the $\lambda$-potential density of $X$. Nevertheless, a closed-form expression for $u_\lambda$ is known only in some special cases, e.g. when $X$ is stable and $\lambda = 0$, or when $X$ is relativistic with $\beta = 2$ (\revisiontwo{with the notation of Example~\ref{ex:tauxest:relativistic} below}
) and $\lambda = 1$. Therefore, in order to \revisiontwo{invert} 
the Laplace transform in~\eqref{eq:potential}, one needs additional regularity of $\Psi$. This is the rough idea of the proof of the main result of~\cite{bib:k12}, which is recalled as Theorem~\ref{th:taux} below, and which is the starting point for our development.

There are essentially two classes of L\'evy processes for which the description of $\tau_x$ simplifies dramatically and has been studied. When $X$ is an $\alpha$-stable process, $\tau_x$ is equal in distribution to $x^\alpha \tau_1$ (\emph{scaling}), so the originally two-dimensional problem becomes one-dimensional. Numerous results are available in this case. In particular, a complete series expansion of the distribution function of $\tau_x$ is known (see~\cite{bib:p08} for processes with one-sided jumps, \cite{bib:bgr61,bib:c10,bib:p67,bib:yyy09a} for the symmetric case, and~\cite{bib:kkpw14} for the general result). \revisionfinal{Other} 
closely related \revisiontwo{results} 
for the stable case (unimodality, distributional identities, applications) can be found in~\cite{bib:ls14,bib:s11,bib:yyy09b}.


The distribution of $\tau_x$ for $x > 0$ is rather well-studied also for L\'evy processes with negative jumps only (also known as spectrally negative processes). Then $\tau_x$ is equal to the first passage time through the level $x$, $\tau_x = \inf \{ t \ge 0 : X_t \ge x \}$, and fluctuation theory for L\'evy processes can be used to study the properties of $\tau_x$. We refer to~\cite[Chapter~9]{bib:s99} for more information.

For non-stable L\'evy processes with two-sided jumps, we are aware of no \revisiontwo{estimates or asymptotic formulae similar to the main results of this article}.


Throughout the article, $X$ is assumed to be symmetric, that is, $b = 0$ and $\nu(E) = \nu(-E)$ for all Borel $E \sub \R$. In this case $\Psi$ \revisiontwo{is a real function with non-negative values}
. We impose two additional restrictions: we require $X$ to have completely monotone jumps and satisfy a certain scaling-type condition. These notions are briefly discussed below.

Recall that a function \revisiontwo{$f : \hl \to \R$} 
is said to be \emph{completely monotone} if it is infinitely \revisiontwo{differentiable} 
and \revisionfinal{$(-1)^n f^{(n)}(\xi) \ge 0$} 
for all $\xi > 0$ and $n = 0, 1, 2, \dots$ By Bernstein's theorem, this is equivalent to \revisionfinal{$f$} 
being the Laplace transform of a non-negative Radon measure on $[0, \infty)$. \revisiontwo{Similarly, we say that a process} 
$X$ has \emph{completely monotone jumps} \revisiontwo{if its} 
L\'evy measure $\nu$ is absolutely continuous with respect to the Lebesgue measure, and its density is a completely monotone function on $(0, \infty)$. Note that due to symmetry, the density of $\nu$ on $(-\infty, 0)$ is \emph{absolutely monotone}: its derivatives of all orders are non-negative.

L\'evy processes with completely monotone jumps (without the symmetry condition) were introduced by Rogers in~\cite{bib:r83}, see also~\cite{bib:k15}. In the symmetric case, an equivalent condition can be given in terms of $\Psi$. Recall that $\psi$ is a complete Bernstein function if and only if
\formula{
 \psi(\xi) & = c_1 + c_2 \xi + \int_{(0, \infty)} \frac{\xi}{s + \xi} \, \frac{\mu(\D s)}{s}
}
for $\xi \ge 0$, where $c_1, c_2 \ge 0$ and $\mu$ is a non-negative measure for which the above integral converges (see~\cite{bib:ssv10}). A symmetric L\'evy process $X$ has completely monotone jumps if and only if $\Psi(\xi) = \psi(\xi^2)$ for a complete Bernstein function $\psi$ (see~\cite{bib:k11,bib:r83}). The most prominent examples of symmetric processes with completely monotone jumps are stable processes, with $\Psi(\xi) = c \tabs{\xi}^\alpha$ for some $c > 0$ and $\alpha \in (0, 2]$. This class includes also mixtures of stable processes and relativistic L\'evy processes (discussed later in this section), as well as variance gamma process and geometric stable processes (which with probability one do not hit single points and thus are not considered here\revisiontwo{; see~\cite{bib:ssv06} for definitions and properties of these processes}).

The aforementioned \emph{scaling-type condition} of order $\alpha$ requires that
\formula[eq:reg:1]{
 \frac{\xi \Psi''(\xi)}{\Psi'(\xi)} & \ge \alpha - 1
}
for all $\xi > 0$. Here $\alpha$ is an arbitrary real number, although in our main theorems we assume that $\alpha \in (1, 2]$. The scaling-type condition plays a crucial role in our development. By integration, \eqref{eq:reg:1} implies that (and in fact, it is equivalent to)
\formula{
 \frac{\Psi'(\xi_2)}{\Psi'(\xi_1)} & \ge \expr{\frac{\xi_2}{\xi_1}}^{\alpha - 1}
}
for all $\xi_2 > \xi_1 > 0$. In Lemma~\ref{l:ratioest} we will see that~\eqref{eq:reg:1} also gives (but it is essentially stronger than)
\formula[eq:reg:3]{
 \frac{\Psi(\xi_2)}{\Psi(\xi_1)} & \ge \expr{\frac{\xi_2}{\xi_1}}^\alpha
}
for all $\xi_2 > \xi_1 > 0$. This explains why we call~\eqref{eq:reg:1} a scaling-type condition.

We note that the scaling-type condition of order $\alpha > 1$ implies that $\pr(\tau_x < \infty) = 1$ for all $x \in \R$. Indeed, by~\eqref{eq:reg:3}, $1 / \tabs{\Psi}$ is not integrable near $0$, so $X$ is recurrent by Chung--Fuchs criterion (\cite[Theorem~37.5]{bib:s99}). Furthermore, again by~\eqref{eq:reg:3}, $1 / \tabs{\Psi}$ is integrable at infinity, so $\pr(\tau_x < \infty) > 0$ by~\cite[Remark~43.6]{bib:s99}. Now $\pr(\tau_x < \infty) = 1$ follows by~\cite[Remark~43.12]{bib:s99}.

The scaling-type condition~\eqref{eq:reg:1} with $\alpha \in (1, 2]$ is satisfied by the typical examples of symmetric L\'evy processes with completely monotone jumps which hit single points with probability $1$: stable, mixed stable (see Example~\ref{ex:tauxest:mixed}) and relativistic (see Example~\ref{ex:tauxest:relativistic}). An equivalent form of~\eqref{eq:reg:1}, as well as a sufficient condition in terms of the L\'evy measure, are given in Remark~\ref{rem:levy}. Nevertheless, \eqref{eq:reg:1} is rather restrictive, see Example~\ref{ex:counter}. We conjecture that the estimates of $\pr(\tau_x > t)$ hold in greater generality, for example, with~\eqref{eq:reg:1} replaced by $\Psi(\xi_2) / \Psi(\xi_1) \ge C (\xi_2 / \xi_1)^\alpha$ for some $C > 0$ and $\alpha > 1$ (a more general version of~\eqref{eq:reg:3}, see~\cite{bib:bgr14a,bib:bgr14,bib:g14}). However, with the present methods, we were unable to significantly relax the assumption that~\eqref{eq:reg:1} holds with $\alpha > 1$.

For symmetric processes with completely monotone jumps, $\Psi$ is an increasing function on $(0, \infty)$. Let $\Psi^{-1}$ denote the inverse function of the restriction of $\Psi$ to $(0, \infty)$. Our first main result provides large $t$ and small $x$ estimates of $\pr(\tau_x > t)$ and its time derivatives. A corollary that follows extends the estimate of $\pr(\tau_x > t)$ (with no time derivative) to the full range of $t > 0$ and $x \in \R \setminus \{0\}$. The constants in these estimates are given explicitly, see Remark~\ref{rem:const}.

\revisiontwo{Below we state the main results of the paper.}

\begin{theorem}
\label{th:tauxest:reg}
Suppose that $X$ is a symmetric L\'evy process with completely monotone jumps, which satisfies the scaling-type condition~\eqref{eq:reg:1} for some $\alpha \in (1, 2]$. Then there are positive constants $C_1(\alpha, n)$, $C_2(\alpha, n)$, $C_3(\alpha, n)$ such that
\formula[eq:tauxest:reg:2]{
 \frac{C_1(\alpha, n)}{t^{n+1} \tabs{x} \Psi(1/\tabs{x}) \Psi^{-1}(1/t)} & \le (-\tfrac{\D}{\D t})^n \pr(\tau_x > t) \le \frac{C_2(\alpha, n)}{t^{n+1} \tabs{x} \Psi(1/\tabs{x}) \Psi^{-1}(1/t)}
}
for all $n \ge 0$, $t > 0$ and $x \in \R \setminus \{0\}$ such that $t \Psi(1/\tabs{x}) \ge C_3(\alpha, n)$.
\end{theorem}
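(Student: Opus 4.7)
The plan is to start from the generalised eigenfunction expansion of $\pr(\tau_x > t)$ recalled as Theorem~\ref{th:taux}, differentiate $n$ times in $t$, rescale the integration variable, and reduce the problem to obtaining two-sided pointwise estimates on the eigenfunction kernel in the regime where $\xi \tabs{x}$ is bounded.

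First, I would write $\pr(\tau_x > t)$ in the spectral form provided by Theorem~\ref{th:taux}, namely as an integral
\formula{
 \pr(\tau_x > t) & = \int_0^\infty e^{-t \Psi(\xi)} F(\xi, \tabs{x}) \, \D \xi,
}
where the kernel $F(\xi, \tabs{x}) \ge 0$ is built from the generalised eigenfunctions of the process killed on hitting the origin. Differentiation under the integral sign, justified by the fast decay of $e^{-t \Psi(\xi)}$ in $\xi$, yields
\formula{
 \expr{-\tfrac{\D}{\D t}}^n \pr(\tau_x > t) & = \int_0^\infty \Psi(\xi)^n e^{-t \Psi(\xi)} F(\xi, \tabs{x}) \, \D \xi.
}

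Next, I would change variables via $\eta = t \Psi(\xi)$, so that $\xi = \Psi^{-1}(\eta / t)$ and $\D \xi = \D \eta / (t \Psi'(\xi))$, extracting a prefactor $t^{-(n+1)}$ in front of an integral of the form $\int_0^\infty \eta^n e^{-\eta} [\cdots] \D \eta$. The weight $\eta^n e^{-\eta}$ localises the mass near $\eta \asymp 1$, that is, $\xi \asymp \Psi^{-1}(1/t)$. The assumption $t \Psi(1/\tabs{x}) \ge C_3(\alpha, n)$ is equivalent (for $C_3$ large enough, using~\eqref{eq:reg:3}) to $\Psi^{-1}(1/t) \tabs{x}$ being small, which confines the effective range of integration to the regime where $\xi \tabs{x}$ is bounded by an $\alpha$-dependent constant.

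The core of the argument is then a two-sided estimate for the kernel in this regime: I expect to show
\formula{
 F(\xi, \tabs{x}) & \asymp \frac{\xi}{\tabs{x} \, \Psi(1/\tabs{x}) \, \Psi'(\xi)}
}
uniformly for $\xi \tabs{x}$ bounded, with constants depending only on $\alpha$. This relies on the scaling-type condition~\eqref{eq:reg:1}, the polynomial ratio estimate~\eqref{eq:reg:3} from Lemma~\ref{l:ratioest}, and the complete Bernstein representation of $\psi$ (with $\Psi(\xi) = \psi(\xi^2)$). Plugging this bound into the rescaled integral and using $\Psi(\xi) = \eta/t$ in the concentrated range yields
\formula{
 \expr{-\tfrac{\D}{\D t}}^n \pr(\tau_x > t) & \asymp \frac{1}{t^{n+1} \tabs{x} \, \Psi(1/\tabs{x}) \, \Psi^{-1}(1/t)},
}
which is~\eqref{eq:tauxest:reg:2}.

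The main obstacle will be the two-sided estimate on $F(\xi, \tabs{x})$ with explicit $\alpha$-dependent constants, as this requires delicate control of generalised eigenfunctions of the killed process; the scaling-type hypothesis~\eqref{eq:reg:1} is what rules out pathological oscillations and permits a clean power-type asymptotic of the eigenfunctions in the range $\xi \tabs{x}$ small. The remaining work amounts to showing that contributions from $\xi \gg \Psi^{-1}(1/t)$ (suppressed by the exponential) and from $\xi$ near the origin (suppressed by the factor $\Psi(\xi)^n$ together with the behaviour of $F$) are negligible compared to the main term; these tail bounds follow from the polynomial estimates on ratios of $\Psi$ provided by~\eqref{eq:reg:1} and~\eqref{eq:reg:3}.
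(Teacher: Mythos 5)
Your overall strategy -- start from the eigenfunction expansion of Theorem~\ref{th:taux}, differentiate $n$ times, and reduce to a two-sided estimate of the kernel in the regime $\xi\tabs{x}$ small -- is precisely the paper's strategy (the change of variable $\eta = t\Psi(\xi)$ is just the substitution $s = t\psi(\lambda^2)$ used in Lemma~\ref{lem:jest:reg}). However, your claimed kernel estimate is incorrect. Writing $\pr(\tau_x>t) = \int_0^\infty e^{-t\Psi(\xi)} F(\xi,\tabs{x})\,\D\xi$ as you do, the expansion~\eqref{eq:taux} (with $n=0$) gives $F(\xi,\tabs{x}) = \tfrac{1}{\pi}\cos\thet_\xi\,\Psi'(\xi)\,\Psi(\xi)^{-1} F_\xi(x)$. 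The two-sided estimate proved in the paper (Lemma~\ref{lem:flambdaest:reg}, together with Lemma~\ref{lem:thetalambdaest:reg} for $\cos\thet_\xi\asymp 1$) gives $F_\xi(x)\asymp \Psi'(\xi)/(\tabs{x}\Psi(1/\tabs{x}))$ in the range $\xi\tabs{x}$ small, hence
\formula{
 F(\xi,\tabs{x}) & \asymp \frac{(\Psi'(\xi))^2}{\Psi(\xi)\,\tabs{x}\,\Psi(1/\tabs{x})} \, .
}
You instead claim $F(\xi,\tabs{x})\asymp \xi/(\tabs{x}\Psi(1/\tabs{x})\Psi'(\xi))$. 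The ratio of the two expressions is $\asymp (\Psi(\xi))^2/\xi^4$ (using $\xi\Psi'(\xi)\asymp\Psi(\xi)$ from Lemma~\ref{l:ratioest}), which is unbounded unless $\Psi(\xi)=\xi^2$. You can see the discrepancy directly in the stable case $\Psi(\xi)=\tabs{\xi}^\alpha$: the correct kernel gives $(-\tfrac{\D}{\D t})^n\pr(\tau_x>t)\asymp \tabs{x}^{\alpha-1}/t^{\,n+1-1/\alpha}$ after the substitution, whereas your kernel produces $\tabs{x}^{\alpha-1}/t^{\,n-1+3/\alpha}$, which matches only for $\alpha=2$. So the displayed ``conclusion'' does not follow from the stated bound; some sign was flipped in the heuristic.

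There are two further gaps. First, $F(\xi,\tabs{x})\ge 0$ is false: $F_\xi(x) = \sin(\xi\tabs{x}+\thet_\xi) - G_\xi(x)$ oscillates, and it is positive only when $\xi\tabs{x}$ is small (this is exactly why the paper truncates at $a=(\pi-\tfrac{\pi}{\alpha})/\tabs{x}$). Because the integrand changes sign, it is not enough to show the tail is ``negligible''; for the lower bound one must prove that the tail is genuinely dominated by the main part. The paper shows $\tabs{I_n(t,x,a)}\le \tfrac12 J_n(t,x,a)$ using only $\tabs{F_\lambda(x)}\le 2$ and the exponential decay, under the threshold $t\Psi(1/\tabs{x})\ge C_3$. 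Second, the two-sided estimate on $F_\xi(x)$ is itself the entire technical content of the theorem; it rests on the decomposition $F_\lambda(x) = (\sin(\lambda\tabs{x}+\thet_\lambda)-\sin\thet_\lambda) + (G_\lambda(0)-G_\lambda(x))$, Fourier-transform estimates for $G_\lambda$ (Lemmas~\ref{lem:glambdaest}--\ref{lem:glambdaest:reg}), and the bound $\thet_\lambda\le \tfrac{\pi}{\alpha}-\tfrac{\pi}{2}$ from Lemma~\ref{lem:thetalambdaest:reg}. Your proposal identifies this as the main obstacle but gives no indication of how to carry it out, so the hard part of the proof is left undone and the stated intermediate bound is wrong.
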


\begin{corollary}
\label{cor:nzero}
For $n = 0$, the conclusion of Theorem~\ref{th:tauxest:reg} can be rewritten as follows: there are positive constants $\tilde{C}_1(\alpha)$ and $\tilde{C}_2(\alpha)$ such that
\formula[eq:tauxest:reg:5]{
 \frac{\tilde{C}_1(\alpha)}{1 + t \tabs{x} \Psi(1/\tabs{x}) \Psi^{-1}(1/t)} & \le \pr(\tau_x > t) \le \frac{\tilde{C}_2(\alpha)}{1 + t \tabs{x} \Psi(1/\tabs{x}) \Psi^{-1}(1/t)}
}
for all $t > 0$ and $x \in \R \setminus \{0\}$.
\end{corollary}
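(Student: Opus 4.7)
The plan is to deduce the corollary from Theorem~\ref{th:tauxest:reg} (at $n = 0$) by a routine case split governed by the threshold $C_3 := C_3(\alpha, 0)$, using only the scaling property~\eqref{eq:reg:3} and the monotonicity of $t \mapsto \pr(\tau_x > t)$. It is convenient to write $F(t, x) := t \tabs{x} \Psi(1/\tabs{x}) \Psi^{-1}(1/t)$ for the denominator appearing in both estimates.

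First I would treat the regime $t \Psi(1/\tabs{x}) \ge C_3$, where Theorem~\ref{th:tauxest:reg} applies directly and gives the estimate with $F(t, x)$ in the denominator. To rewrite it with $1 + F(t, x)$, I need $F(t, x)$ to be bounded below by a positive constant depending only on $\alpha$ and $C_3$. Applying~\eqref{eq:reg:3} to the pair $(1/\tabs{x}, \Psi^{-1}(1/t))$, and splitting according to which argument is larger, the scaling property translates into $\tabs{x} \Psi^{-1}(1/t) \ge (t \Psi(1/\tabs{x}))^{-1/\alpha}$ when $t \Psi(1/\tabs{x}) \ge 1$, and into the trivial inequality $\tabs{x} \Psi^{-1}(1/t) \ge 1$ when $t \Psi(1/\tabs{x}) < 1$. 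Either way $F(t, x) \ge \min(C_3, C_3^{1 - 1/\alpha}) > 0$, so $1 + F(t, x)$ and $F(t, x)$ are comparable, and~\eqref{eq:tauxest:reg:2} for $n = 0$ passes to~\eqref{eq:tauxest:reg:5} with adjusted multiplicative constants.

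Next I would handle the complementary regime $t \Psi(1/\tabs{x}) < C_3$, where I need both $\pr(\tau_x > t)$ and $1/(1 + F(t, x))$ to lie between positive constants depending only on $\alpha$. A symmetric application of~\eqref{eq:reg:3} bounds $F(t, x)$ above by $\max(1, C_3, C_3^{1 - 1/\alpha})$, so $1/(1 + F(t, x))$ stays in a compact subinterval of $(0, 1]$. The estimate $\pr(\tau_x > t) \le 1$ is automatic, so it remains to establish a matching lower bound. For this I would exploit monotonicity: setting $t_0 := C_3 / \Psi(1/\tabs{x})$, so that $t_0 \Psi(1/\tabs{x}) = C_3$ and $t \le t_0$, Theorem~\ref{th:tauxest:reg} applies at the boundary point $(t_0, x)$ and yields $\pr(\tau_x > t) \ge \pr(\tau_x > t_0) \ge C_1(\alpha, 0) / F(t_0, x)$; one further invocation of~\eqref{eq:reg:3} bounds $F(t_0, x)$ above by a constant depending only on $\alpha$ and $C_3$, producing the desired positive lower bound.

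There is no serious obstacle: the whole argument is bookkeeping, and the final constants $\tilde C_1(\alpha), \tilde C_2(\alpha)$ arise by taking the smaller and larger, respectively, of the constants produced in the two cases. The only mildly delicate point is keeping track of the two sub-cases $\Psi^{-1}(1/t) \gtrless 1/\tabs{x}$ when invoking~\eqref{eq:reg:3}, but in each sub-case the resulting algebra is elementary.
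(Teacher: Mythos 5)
Your argument is correct and follows essentially the same path as the paper's own proof: the same case split on $t\Psi(1/\tabs{x}) \gtrless C_3$, the same scaling computation showing the denominator $F(t,x)$ is bounded below by $C_3^{1-1/\alpha}$ in the first regime and above by $C_3$ in the second, and the same monotonicity trick of evaluating Theorem~\ref{th:tauxest:reg} at $t_0 = C_3/\Psi(1/\tabs{x})$ to get the lower bound in the second regime. The only cosmetic difference is that you phrase the scaling estimates directly via~\eqref{eq:reg:3} in terms of $\Psi$, whereas the paper passes to $\psi$ and invokes~\eqref{eq:invest:2}; these are equivalent.
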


Under an additional regularity condition, the above two-sided estimates can be turned into asymptotic formulae for $\pr(\tau_x > t)$ as $t \to \infty$ or $x \to 0$. Recall that a function $\psi : \hl \to \R$ is \emph{regularly varying at infinity} with index $\alpha$ if $\lim_{\xi \to \infty} \psi(k \xi) / \psi(\xi) = k^\alpha$ for all $k > 0$. If the same equation holds with the limit as $\xi \to 0^+$ instead of $\xi \to \infty$, $\psi$ is said to be \emph{regularly varying at zero} with index $\alpha$. \revisiontwo{Observe that if $\Psi$ satisfies the scaling-type condition~\eqref{eq:reg:1} and it is regularly varying with index $\gamma$ at infinity or at zero, then, by~\eqref{eq:reg:3}, we have $\gamma \ge \alpha$.}

\begin{theorem}
\label{th:tauxasymp:reg}
Suppose that $X$ is a symmetric L\'evy process with completely monotone jumps, which satisfies the scaling-type condition~\eqref{eq:reg:1} for some $\alpha \in (1, 2]$.
\begin{enumerate}[label={\rm (\alph*)}]
\item\label{th:tauxasymp:reg:a}
If $\Psi$ is regularly varying at infinity with index $\gamma \in (1, 2]$, then the limit
\formula{
 & \lim_{x \to 0} \expr{\tabs{x} \Psi(\tfrac{1}{\tabs{x}}) (-\tfrac{\D}{\D t})^n \pr(\tau_x > t)}
}
exists and belongs to $(0, \infty)$ for all $n \ge 0$ and $t > 0$.
\item\label{th:tauxasymp:reg:b}
If $\Psi$ is regularly varying at zero with index $\delta \in (1, 2]$, then the limit
\formula{
 \lim_{t \to \infty} \expr{t^{n + 1} \Psi^{-1}(\tfrac{1}{t}) (-\tfrac{\D}{\D t})^n \pr(\tau_x > t)}
}
exists and belongs to $(0, \infty)$ for all $n \ge 0$ and $x \in \R \setminus \{0\}$.
\end{enumerate}
\end{theorem}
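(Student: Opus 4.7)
The plan is to upgrade the two-sided estimates of Theorem~\ref{th:tauxest:reg} to genuine asymptotic formulae, with regular variation supplying the limiting constants. Following the framework announced in the abstract, I assume a spectral/integral representation
\[
 (-\tfrac{\D}{\D t})^n \pr(\tau_x > t) = \int_0^\infty \lambda^n e^{-\lambda t} F(\lambda, x)\, \D\lambda
\]
developed in the earlier sections via the generalised eigenfunction expansion for $X$ killed at $\{0\}$, where $F(\lambda, x)$ is an explicit functional of $\Psi$ on which Theorem~\ref{th:tauxest:reg} translates into the two-sided bound $F(\lambda, x) \asymp 1 / (\tabs{x} \Psi(1/\tabs{x}) \Psi^{-1}(\lambda))$ in the relevant range. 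The strategy is to apply dominated convergence to this integral representation after rescaling.

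For part~\ref{th:tauxasymp:reg:a}, fix $t > 0$ and $n \ge 0$, and write
\[
 \tabs{x} \Psi(\tfrac{1}{\tabs{x}})\, (-\tfrac{\D}{\D t})^n \pr(\tau_x > t) = \int_0^\infty \lambda^n e^{-\lambda t} \bigl[\tabs{x} \Psi(\tfrac{1}{\tabs{x}}) F(\lambda, x)\bigr]\, \D\lambda ,
\]
then perform the substitution $\xi = \eta/\tabs{x}$ inside the integral formula for $F$. The ratio $\Psi(\eta/\tabs{x}) / \Psi(1/\tabs{x})$ appears and, by regular variation at infinity, converges to $\eta^\gamma$ pointwise in $\eta > 0$ as $x \to 0$. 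This produces a pointwise limit $G_\gamma(\lambda)$ of the bracketed integrand, equal to the corresponding quantity for the symmetric $\gamma$-stable process with exponent $\tabs{\xi}^\gamma$. The upper bound from Theorem~\ref{th:tauxest:reg} yields $\tabs{x}\Psi(1/\tabs{x}) F(\lambda,x) \le C/\Psi^{-1}(\lambda)$, and the scaling condition~\eqref{eq:reg:1} supplies polynomial bounds on $1/\Psi^{-1}(\lambda)$ at zero and infinity, so the factor $\lambda^n e^{-\lambda t}$ provides an integrable envelope.

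Part~\ref{th:tauxasymp:reg:b} proceeds analogously. With $x$ fixed, the substitution $\mu = \lambda t$ gives
\[
 t^{n+1} \Psi^{-1}(\tfrac{1}{t}) (-\tfrac{\D}{\D t})^n \pr(\tau_x > t) = \Psi^{-1}(\tfrac{1}{t}) \int_0^\infty \mu^n e^{-\mu} F(\mu/t, x)\, \D\mu .
\]
The substitution $\xi = \Psi^{-1}(\mu/t)\, \eta$ inside $F(\mu/t, x)$ now produces ratios of the form $\Psi(\Psi^{-1}(\mu/t)\eta) / (\mu/t)$, which, by regular variation of $\Psi$ at zero with index $\delta$, tend to $\eta^\delta$ pointwise in $\eta > 0$ as $t \to \infty$. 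Once more, the two-sided bounds from Theorem~\ref{th:tauxest:reg} combined with~\eqref{eq:reg:1} supply an integrable dominator.

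The principal obstacle is justifying the passage to the limit inside the inner integrals defining $F(\lambda, x)$: the regular-variation hypothesis delivers uniform convergence only on compact subsets of $(0, \infty)$, so the tail behaviour at $\eta \to 0^+$ and $\eta \to \infty$ must be controlled by the polynomial two-sided bounds guaranteed by~\eqref{eq:reg:1} (of the form $\Psi(\eta/\tabs{x})/\Psi(1/\tabs{x}) \ge \eta^\alpha$ on $\eta \ge 1$ and a matching reverse bound on $\eta \le 1$ coming from the complete Bernstein structure). Once pointwise convergence and integrable domination are in place, the limits exist and coincide with the corresponding expressions for the stable process of index $\gamma$ (respectively $\delta$); positivity follows from the lower bound in Theorem~\ref{th:tauxest:reg}.
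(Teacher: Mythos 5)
Your overall template (integral representation, regular variation, dominated convergence) is broadly the one the paper uses, but two key steps fail. First, the pointwise limits you compute are wrong: they do \emph{not} reduce to stable-process quantities. For part (a), the paper's Lemma~\ref{lem:flambdareg} shows that $\lim_{x\to 0^+} x\psi(1/x^2)F_\lambda(x) = \lambda\psi'(\lambda^2)\cos\thet_\lambda / (\Gamma(\gamma)\tabs{\cos\tfrac{\gamma\pi}{2}})$, which still depends on the full $\Psi$ through $\psi'(\lambda^2)$ and $\thet_\lambda$; only the constant $1/(\Gamma(\gamma)\tabs{\cos\tfrac{\gamma\pi}{2}})$ is a stable remnant, and the resulting limit (Remark~\ref{rem:lim}) is an integral over $\lambda$ of $\Psi$-dependent quantities, not a stable hitting-time expression. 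In part (b), the limit contains the compensated potential kernel $v(x)$ of $X$ itself (Lemma~\ref{lem:fzero} and Remark~\ref{rem:lim}), which is certainly not determined by the index $\delta$ alone. Your substitution $\xi = \eta/\tabs{x}$ only rescales the inner spectral variable in the formula for $F_\lambda$; the factors $\Psi(\lambda)$, $\Psi'(\lambda)$, and above all the phase $\thet_\lambda$, do not transform, so there is no mechanism by which the answer could collapse to the stable case.

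Second, the domination is not established. Your bound $\tabs{x}\Psi(1/\tabs{x}) F(\lambda, x)\le C/\Psi^{-1}(\lambda)$ does not follow from Theorem~\ref{th:tauxest:reg}, which bounds the probability (a Laplace transform), not the density kernel; a pointwise bound on a Laplace transform does not yield a pointwise bound on its density. The bound the paper actually proves, Lemma~\ref{lem:flambdaest:reg}, holds only for $\lambda\tabs{x} < 2$; on the complementary range one only has $\tabs{F_\lambda(x)}\le 2$, and no dominator valid uniformly for small $\tabs{x}$ over all $\lambda > 0$ is available. The paper therefore splits the integral in~\eqref{eq:taux} at $\lambda = 2/\tabs{x}$, applies dominated convergence only to $J_n(t,x,2/\tabs{x})$, and separately shows the tail $I_n(t,x,2/\tabs{x})$ is $O(\tabs{x})$ via Lemma~\ref{lem:iest:reg}, hence negligible after multiplication by $\tabs{x}\Psi(1/\tabs{x})$. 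For part (b) the paper moreover replaces plain dominated convergence by a concentration argument (vague convergence of the measures $\tfrac{4}{\pi}t^{n+1}\sqrt{\psi^{-1}(1/t)}e^{-t\psi(\lambda^2)}\psi'(\lambda^2)(\psi(\lambda^2))^n\ind_{(0,a)}(\lambda)\D\lambda$ to a Dirac mass at $0$); your change of variables $\mu = \lambda t$ produces a pointwise limit proportional to $\mu^{-1/\delta}$, unbounded near $0$, so the required domination needs the quantitative estimates of Lemma~\ref{l:ratioest} and~\eqref{eq:invest:2}, which you do not supply.
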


The limits in the above theorem are given explicitly by rather complicated expressions, see Remark~\ref{rem:lim}. In the following examples, application of Theorems~\ref{th:tauxest:reg} and~\ref{th:tauxasymp:reg} to three types of symmetric L\'evy processes with completely monotone jumps is given. Technical details, such as verification of~\eqref{eq:reg:1}, are left to the reader.


Note that our main results for symmetric stable processes follow immediately from the full series expansion \revisiontwo{given in~\cite{bib:kkpw14}}: Theorem~\ref{th:tauxasymp:reg} gives the first term, and two-sided estimates of Theorem~\ref{th:tauxest:reg} follow easily by a scaling argument. On the other hand, Theorems~\ref{th:tauxest:reg} and~\ref{th:tauxasymp:reg} seem to be completely new for non-stable processes.


\begin{example}
\label{ex:tauxest:relativistic}
Suppose that $1 < \alpha < \beta \le 2$ and let $X$ be the L\'evy process with L\'evy--Khintchine exponent $\Psi(\xi) = (1 + \tabs{\xi}^\beta)^{\alpha/\beta} - 1$ (sometimes $X$ is called the \emph{relativistic L\'evy process}). Then
\formula{
 \frac{c_1(\alpha, n) \tabs{x}^{\alpha - 1} (1 + \tabs{x})^{\beta - \alpha}}{t^{n+1-1/\alpha} (1 + t)^{1/\alpha-1/\beta}} & \le \expr{-\frac{\D}{\D t}}^n \pr(\tau_x > t) \le \frac{c_2(\alpha, n) \tabs{x}^{\alpha - 1} (1 + \tabs{x})^{\beta - \alpha}}{t^{n+1-1/\alpha} (1 + t)^{1/\alpha-1/\beta}}
}
for all $n \ge 0$, $t > 0$ and $x \in \R \setminus \{0\}$ such that $t / \min(\tabs{x}^\alpha, \tabs{x}^\beta) \ge c_3(\alpha, n)$. Furthermore, finite and positive limits
\formula{
 & \lim_{x \to 0} \expr{\tabs{x}^{1 - \alpha} (-\tfrac{\D}{\D t})^n \pr(\tau_x > t)} , && \lim_{t \to \infty} \expr{t^{n + 1 - 1 / \beta} (-\tfrac{\D}{\D t})^n \pr(\tau_x > t)}
}
exist for all $n \ge 0$, $t > 0$ and $x \in \R \setminus \{0\}$. Note that the restriction $\alpha > 1$ is required by the scaling-type condition~\eqref{eq:reg:1}\revisiontwo{. Otherwise, if $\alpha \le 1$, we have that} 
$\pr(\tau_x < \infty) = 0$.
\end{example}

\begin{example}
\label{ex:tauxest:mixed}
Suppose that $1 < \alpha < \beta \le 2$, and let $X$ be the L\'evy process with L\'evy--Khintchine exponent $\Psi(\xi) = \tabs{\xi}^\alpha + \tabs{\xi}^\beta$ (that is, $X$ is the sum of independent stable L\'evy processes). Then
\formula{
 \frac{c_1(\alpha, n) \tabs{x}^{\beta - 1} (1 + t)^{1/\alpha - 1/\beta}}{t^{n+1-1/\beta} (1 + \tabs{x})^{\beta - \alpha}} & \le \expr{-\frac{\D}{\D t}}^n \pr(\tau_x > t) \le \frac{c_2(\alpha, n) \tabs{x}^{\beta - 1} (1 + t)^{1/\alpha - 1/\beta}}{t^{n+1-1/\beta} (1 + \tabs{x})^{\beta - \alpha}}
}
for all $n \ge 0$, $t > 0$ and $x \in \R \setminus \{0\}$ such that $t / \max(\tabs{x}^\alpha, \tabs{x}^\beta) \ge c_3(\alpha, n)$. Furthermore, finite and positive limits
\formula{
 & \lim_{x \to 0} \expr{\tabs{x}^{1 - \beta} (-\tfrac{\D}{\D t})^n \pr(\tau_x > t)} , && \lim_{t \to \infty} \expr{t^{n + 1 - 1 / \alpha} (-\tfrac{\D}{\D t})^n \pr(\tau_x > t)}
}
exist for all $n \ge 0$, $t > 0$ and $x \in \R \setminus \{0\}$. As in the previous example, the restriction $\alpha > 1$ is required by the scaling-type condition~\eqref{eq:reg:1}. If $\alpha \le 1 < \beta$, then $0 < \pr(\tau_x < \infty) < 1$ and the estimates of $\pr(\tau_x < t)$ are unknown. When $\beta \le 1$, then $\pr(\tau_x < \infty) = 0$.
\end{example}

\begin{example}
\label{ex:tauxest:other}
Let $X$ be the pure-jump L\'evy process with L\'evy--Khintchine exponent $\Psi(\xi) = \xi^2 (\log(1 + \xi^2))^{-1} - 1$ (see~\cite{bib:m13}). Since $\Psi$ is regularly varying with index $2$ both at $0$ and at infinity, it can be checked that both large-time and small-time scaling limits:
\formula{
 & (k^{-1/2} X_{k t} : t \ge 0) && \text{as $k \to \infty$}, \\
 & ((2 k)^{-1/2} X_{k \log(1/k) t} : t \ge 0) && \text{as $k \to 0^+$,}
}
are standard Wiener processes (cf.~\cite{bib:gk68}). Let $\ph(t) = 1$ for $t \ge \tfrac{1}{e}$ and $\ph(t) = (t e^{-W_{-1}(-t)})^{1/2}$ when $0 < t < \tfrac{1}{e}$ (where $W_{-1}$ is the lower branch of the Lambert $W$ function). We have
\formula{
 \frac{c_1(n) \tabs{x} \log(2 + \tfrac{1}{\tabs{x}})}{t^{n+1/2} \ph(t)} & \le \expr{-\frac{\D}{\D t}}^n \pr(\tau_x > t) \le \frac{c_2(n) \tabs{x} \log(2 + \tfrac{1}{\tabs{x}})}{t^{n+1/2} \ph(t)}
}
for all $n \ge 0$, $t > 0$ and $x \in \R \setminus \{0\}$ such that $t / (\tabs{x}^2 \log(2 + \tfrac{1}{\tabs{x}})) \ge c_3(n)$. Furthermore, finite and positive limits
\formula{
 & \lim_{x \to 0} \frac{(-\tfrac{\D}{\D t})^n \pr(\tau_x > t)}{\tabs{x} \log(2 + \tfrac{1}{\tabs{x}})} \, , && \lim_{t \to \infty} \expr{t^{n + 1/2} (-\tfrac{\D}{\D t})^n \pr(\tau_x > t)}
}
exist for all $n \ge 0$, $t > 0$ and $x \in \R \setminus \{0\}$.
\end{example}

\begin{example}\label{ex:counter}
Let $X$ be the sum of a standard Wiener process and a compound Poisson process with L\'evy measure $c e^{-|x|} \D x$. Then $X$ is symmetric, has completely monotone jumps and $\Psi(\xi) = \tfrac{1}{2} \xi^2 + c \xi^2 / (1 + \xi^2)$. By a direct calculation,
\formula{
 \frac{\xi \Psi''(\xi)}{\Psi'(\xi)} & = 1 - \frac{8 c \xi^2}{((1 + \xi^2)^2 + 2 c) (1 + \xi^2) } \, .
}
The right-hand side decreases with $c \ge 0$, and for $c = 2$ we have
\formula{
 \inf \left\{ \frac{\xi \Psi''(\xi)}{\Psi'(\xi)} : \xi \in (0, \infty) \right\} & = \frac{\Psi''(1)}{\Psi'(1)} = 0 .
}
It follows that $X$ satisfies the scaling-type condition~\eqref{eq:reg:1} with $\alpha \in (1, 2]$ if and only if $c \in [0, 2)$. We remark that the restriction $c < 2$ is apparently the limitation of our method, there is no reason to believe that for $c \ge 2$ the conclusions of Theorems~\ref{th:tauxest:reg} and~\ref{th:tauxasymp:reg} no longer hold.
\end{example}

\begin{remark}\label{rem:levy}
The scaling-type condition~\eqref{eq:reg:1} with $\alpha \in (1, 2]$ is easily shown to be equivalent to concavity of $\Psi(\xi^{1 - \eps})$ for some $\eps \in (0, \tfrac{1}{2}]$ (with $\alpha - 1 = \tfrac{\eps}{1 - \eps}$). A sufficient condition for~\eqref{eq:reg:1} with $\alpha \in (1, 2]$ in terms of the L\'evy measure of $X$ is described below.

Let $X$ be a symmetric L\'evy process with completely monotone jumps, and \revisiontwo{denote the density function of the L\'evy measure $\nu$ of $X$ by the same symbol $\nu$.} 
Then
\formula{
 \Psi(\xi) & = a \xi^2 + 2 \int_0^\infty (1 - \cos(\xi z)) \revisiontwo{\nu}(z) \D z = a \xi^2 + 2 \int_0^\infty (1 - \cos s) \tfrac{1}{\xi} \revisiontwo{\nu}(\tfrac{s}{\xi}) \D s .
}
Assuming that $\tfrac{\D}{\D \xi} (\tfrac{1}{\xi} \revisiontwo{\nu}(\tfrac{s}{\xi})) \ge 0$ and $\tfrac{\D^2}{\D \xi^2} (\tfrac{1}{\xi} \revisiontwo{\nu}(\tfrac{s}{\xi})) \ge 0$, differentiation in $\xi$ under the integral sign is permitted. It follows that
\formula{
 \xi^2 \Psi''(\xi) - (\alpha - 1) \xi \Psi'(\xi) & = 2 a (2 - \alpha) \xi^2 \\
 & \hspace*{-5em} + 2 \int_0^\infty (1 - \cos s) \tfrac{1}{\xi} ((\tfrac{s}{\xi})^2 \revisiontwo{\nu}''(\tfrac{s}{\xi}) + (3 + \alpha) \tfrac{s}{\xi} \revisiontwo{\nu}'(\tfrac{s}{\xi}) + (1 + \alpha) \revisiontwo{\nu}(\tfrac{s}{\xi})) \D s .
}
The right-hand side is non-negative if $z^2 \revisiontwo{\nu}''(z) + (3 + \alpha) z \revisiontwo{\nu}'(z) + (1 + \alpha) \revisiontwo{\nu}(z) \ge 0$ for all $z > 0$, which is equivalent to $\tfrac{\D^2}{\D z^2} (z^{-1/\alpha} \revisiontwo{\nu}(z^{-1/\alpha})) \ge 0$. This condition alone implies that $\tfrac{\D^2}{\D \xi^2} (\tfrac{1}{\xi} \revisiontwo{\nu}(\tfrac{s}{\xi})) \ge 0$, and if $z^{-1/\alpha} \revisiontwo{\nu}(z^{-1/\alpha})$ is increasing, then also $\tfrac{\D}{\D \xi} (\tfrac{1}{\xi} \revisiontwo{\nu}(\tfrac{s}{\xi})) \ge 0$.

The above argument shows that if $\alpha \in (1, 2]$ and $z^{-1/\alpha} \revisiontwo{\nu}(z^{-1/\alpha})$ is convex and nondecreasing in $z > 0$, then~\eqref{eq:reg:1} holds.
\end{remark}


Since the proofs of main theorems are rather technical, below we outline the main idea and briefly discuss the structure of the article. Our starting point is the following generalised eigenfunction expansion, proved in~\cite{bib:k12}. Note that in the original statement the condition $\xi \Psi''(\xi) \le \Psi'(\xi)$ was erroneously given as $2 \xi \Psi''(\xi) \le \Psi'(\xi)$ (the proof, however, used the correct condition). In the statement, as well as in the remaining part of the article, by $\fourier f(\xi) = \int_{-\infty}^\infty f(s) e^{-i s \xi} \D s$ we denote the Fourier transform of an integrable function~$f$. Occasionally, the distributional Fourier transform is used: if $f$ is a Schwartz distribution, then $\fourier f$ is again a Schwartz distribution, defined by $\tscalar{\fourier f, \ph} = \tscalar{f, \fourier \ph}$ for all \revisiontwo{$\ph$ in the} Schwartz class
.

\begin{theorem}[{\cite[Theorem~1.1 and Remark~1.2]{bib:k12}}]
\label{th:taux}
Suppose that $X$ is a symmetric L{\'e}vy process. If $1 / \Psi$ is integrable at infinity and
\formula[eq:reg:0]{
 \Psi'(\xi) & > 0 , & \frac{\xi \Psi''(\xi)}{\Psi'(\xi)} & \le 1
}
for all $\xi > 0$ (cf.~\eqref{eq:reg:1}), then
\formula[eq:taux]{
 (-\tfrac{\D}{\D t})^n \pr(t < \tau_x < \infty) & = \frac{1}{\pi} \int_0^\infty \cos \thet_\lambda e^{-t \Psi(\lambda)} \Psi'(\lambda) (\Psi(\lambda))^{n-1} F_\lambda(x) \D\lambda
}
for all $n \ge 0$ and $t > 0$, and almost all $x \in \R$. Here $F_\lambda$ is a bounded, continuous function, defined by
\formula{
 F_\lambda(x) & = \sin(\lambda \tabs{x} + \thet_\lambda) - G_\lambda(x)
}
for all $x \in \R$, where
\formula[eq:theta-def]{
 \thet_\lambda & = \arctan\expr{\frac{1}{\pi} \, \int_0^\infty \expr{\frac{\Psi'(\lambda)}{\Psi(\xi) - \Psi(\lambda)} - \frac{2 \lambda}{\xi^2 - \lambda^2}} \D \xi}
}
and $G_\lambda$ is an $L^2(\R) \cap C_0(\R)$ function with (integrable) Fourier transform
\formula{
 \fourier G_\lambda(\xi) & = \cos \thet_\lambda \expr{\frac{\Psi'(\lambda)}{\Psi(\xi) - \Psi(\lambda)} - \frac{2 \lambda}{\xi^2 - \lambda^2}}
}
for all $\xi \in \R \setminus \{-\lambda, \lambda\}$. The distributional Fourier transform of $F_\lambda$ is given by
\formula{
 \scalar{\fourier F_\lambda, \ph} & = \cos \thet_\lambda \pvint_{-\infty}^\infty \frac{\Psi'(\lambda) \ph(\xi)}{\Psi(\lambda) - \Psi(\xi)} \, \D \xi + \pi \sin \thet_\lambda (\ph(\lambda) + \ph(-\lambda))
}
for $\ph$ in the Schwartz class (here $\pvint$ stands for the Cauchy principal value integral).
\end{theorem}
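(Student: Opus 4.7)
The plan is to realise $(-\tfrac{\D}{\D t})^n \pr(t < \tau_x < \infty)$ via a generalised spectral resolution of the semigroup of $X$ killed upon hitting the origin, and to read off~\eqref{eq:taux} directly from this diagonalisation. The scaling-type bound $\xi \Psi''(\xi)/\Psi'(\xi) \le 1$ together with $\Psi'(\xi) > 0$ forces $\Psi$ to be strictly increasing on $\hl$, so the continuous spectrum is naturally parametrised by $\lambda > 0$ with eigenvalue $-\Psi(\lambda)$, and the putative eigenfunctions are bounded solutions $F_\lambda$ on $\R \setminus \{0\}$ vanishing at the origin. I would construct $F_\lambda$ from the scattering-type ansatz $F_\lambda(x) = \sin(\lambda \tabs{x} + \thet_\lambda) - G_\lambda(x)$ with $G_\lambda \in L^2(\R) \cap C_0(\R)$: the oscillatory part carries the $\tabs{x} \to \infty$ asymptotics, while $\thet_\lambda$ and $G_\lambda$ are fixed by the two constraints $F_\lambda(0) = 0$ (the Dirichlet condition at the killing point) and $(\Psi(\xi) - \Psi(\lambda)) \fourier F_\lambda(\xi) = 0$ in the distributional sense. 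Matching the distributional Fourier transform of the plane-wave piece against $\fourier G_\lambda$ yields the integral formula for $\fourier G_\lambda$, and the vanishing condition at $0$ then forces the arctangent formula for $\thet_\lambda$.

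With the family $\{F_\lambda\}$ in hand I would establish a Plancherel-type identity identifying Lebesgue measure on $\R \setminus \{0\}$ with the spectral measure $\pi^{-1} \Psi'(\lambda) \D \lambda$ under the transform $f \mapsto \int f(y) F_\lambda(y) \D y$, and then invoke standard functional calculus to diagonalise the killed semigroup. Applying this to $f = \ind_\R$ in an approximate sense (legitimate because $1/\Psi$ is integrable at infinity), and pairing the distributional formula for $\fourier F_\lambda$ with an approximation of the Dirac mass at $\xi = 0$, the transform of the constant function reduces to $\cos \thet_\lambda / \Psi(\lambda)$; after tidying the factors one recovers the $n = 0$ case of~\eqref{eq:taux}, modulo the constant $\pr_x(\tau_0 = \infty)$, which vanishes upon differentiation and is in any case absorbed by the restriction $\tau_x < \infty$. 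Differentiating $n$ times in $t$ under the integral brings down a factor $(\Psi(\lambda))^n$, producing the weight $(\Psi(\lambda))^{n-1}$ stated in~\eqref{eq:taux}.

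The main obstacle is making this expansion fully rigorous: since $F_\lambda \notin L^2(\R)$, the Plancherel identity must be interpreted as a generalised eigenfunction expansion in the sense of rigged Hilbert spaces, and its proof demands careful handling of the Cauchy principal values appearing in~\eqref{eq:theta-def} and in the formula for $\fourier G_\lambda$. The condition $\xi \Psi''(\xi) \le \Psi'(\xi)$ enters at several places: it ensures that the combination $\Psi'(\lambda)/(\Psi(\xi) - \Psi(\lambda)) - 2\lambda/(\xi^2 - \lambda^2)$ has the right sign and integrability near $\xi = \lambda$, so that $\thet_\lambda$ and $G_\lambda$ are well-defined bounded, continuous functions of the spectral parameter. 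Integrability of $1/\Psi$ at infinity then controls the tail of the $\lambda$-integral in~\eqref{eq:taux}, legitimising both the $n$-fold differentiation under the integral sign and the passage from an a.e. identity in $x$ to a pointwise statement away from the origin.
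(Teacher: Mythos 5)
There is nothing in the paper to compare against: Theorem~\ref{th:taux} is not proved here. It is imported verbatim from \cite[Theorem~1.1 and Remark~1.2]{bib:k12} and used as a black box; the only new contribution of the present article touching~\eqref{eq:taux} is the upgrade from ``almost all~$x$'' to ``all~$x\neq 0$'' in Proposition~\ref{prop:everywhere}, and that is a dominated-convergence continuity argument, not a re-derivation of the spectral theory. With that understood, your outline is a fair high-level reconstruction of the strategy of~\cite{bib:k12}, which the paper's abstract itself describes as a ``generalised eigenfunction expansion for processes killed upon hitting the origin'': one constructs generalised eigenfunctions $F_\lambda$ of the killed generator from a scattering ansatz $\sin(\lambda\tabs{x}+\thet_\lambda)-G_\lambda(x)$, pins down $\thet_\lambda$ and $G_\lambda$ via the Dirichlet condition at the origin (the paper records $G_\lambda(0)=\sin\thet_\lambda$, citing \cite[Theorem~1.9(a)]{bib:k12}, which is precisely $F_\lambda(0)=0$) together with the distributional eigenvalue equation, and then diagonalises the killed semigroup.

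Two caveats. First, your bookkeeping in the ``apply the transform to the constant function'' step should be checked against the stated formulas: evaluating the displayed expression for $\tscalar{\fourier F_\lambda,\ph}$ at a point mass at $\xi=0$ gives $\fourier F_\lambda(0)=\cos\thet_\lambda\,\Psi'(\lambda)/\Psi(\lambda)$, and it is then the plain measure $\pi^{-1}\D\lambda$ that reproduces~\eqref{eq:taux}; you instead write $\cos\thet_\lambda/\Psi(\lambda)$ with spectral measure $\pi^{-1}\Psi'(\lambda)\D\lambda$. These two factors of $\Psi'(\lambda)$ cancel, so your final formula is right, but unless you are implicitly renormalising the transform, each intermediate claim is individually off. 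Second, and more substantively, the real content of~\cite{bib:k12} lies exactly where your proposal stops: establishing the Plancherel/completeness identity for the non-$L^2$ family $\{F_\lambda\}$, making the principal-value integrals in~\eqref{eq:theta-def} and in $\fourier G_\lambda$ rigorous, and justifying the termwise $t$-differentiation. This is precisely where the hypotheses ($1/\Psi$ integrable at infinity, $\Psi'>0$, $\xi\Psi''/\Psi'\le 1$) are actually consumed, and it does not follow from a generic appeal to rigged Hilbert spaces. You acknowledge the obstacle honestly, but a sketch that defers the Plancherel identity has not yet met the theorem.
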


As it is explained right after formula~\eqref{eq:reg} below, symmetric L\'evy processes with completely monotone jumps automatically satisfy~\eqref{eq:reg:0}, so Theorem~\ref{th:taux} can be applied whenever $1 / \Psi$ is integrable at infinity. The latter condition holds, for example, if the scaling-type condition~\eqref{eq:reg:1} is satisfied with $\alpha \in (1, 2]$.

The main idea of the proof of Theorems~\ref{th:tauxest:reg} and~\ref{th:tauxasymp:reg} is taken from~\cite{bib:kmr12}, where a similar problem for first passage times was studied. The \emph{generalised eigenfunctions} $F_\lambda(x)$ are oscillatory due to the $\sin(\lambda \tabs{x} + \thet_\lambda)$ term, but $F_\lambda(x) > 0$ when $\lambda \tabs{x}$ is small enough, and two-sided estimates for $F_\lambda(x)$ can be given in this case. Thanks to the exponential term $e^{-t \Psi(\lambda)}$ in~\eqref{eq:taux}, the main contribution to the integral comes from \smash{$\lambda \in (0, \tfrac{c}{\tabs{x}})$}, provided that $t$ is large enough, or $\tabs{x}$ is small enough. This essentially gives Theorem~\ref{th:tauxest:reg}. The proof of Theorem~\ref{th:tauxasymp:reg} requires in addition an asymptotic expression for $F_\lambda(x)$ as $x \to 0$ or $\lambda \to 0$.

We collect some simple technical results in Section~\ref{sec:pre}, so that they do not distract attention of the reader at a later point. In Section~\ref{sec:theta} the properties of $\thet_\lambda$ are studied. In Lemma~\ref{lem:thetalambdaest:reg} it is proved that the scaling-type condition~\eqref{eq:reg:1} implies $\thet_\lambda \le \tfrac{\pi}{\alpha} - \tfrac{\pi}{2}$ for all $\lambda > 0$. The asymptotic behaviour of $\thet_\lambda$ as $\lambda \to 0^+$ or $\lambda \to \infty$ is given in Lemma~\ref{lem:thetalambda:reg}.

The estimates and asymptotic properties of $F_\lambda$ are given in Section~\ref{sec:flambda}. Lemma~\ref{lem:flambdaest} contains a rather general estimate, which is then simplified in Lemma~\ref{lem:flambdaest:reg} for processes satisfying the scaling-type condition~\eqref{eq:reg:1}. Asymptotic \revisiontwo{expansions} 
of $F_\lambda$ \revisiontwo{are} 
given in Lemmas~\ref{lem:flambdareg} and~\ref{lem:fzero}.

The final Section~\ref{sec:taux} contains proofs of main theorems, preceded by two propositions of more general nature and two technical lemmas. Proposition~\ref{prop:everywhere} extends~\eqref{eq:taux} to all $x \in \R \setminus \{0\}$. Lemmas~\ref{lem:jest:reg} and~\ref{lem:iest:reg} contain estimates of the main part (\smash{$\lambda < \tfrac{c}{\tabs{x}}$}) and the remainder part (\smash{$\lambda > \tfrac{c}{\tabs{x}}$}) of the integral in~\eqref{eq:taux}.

Instead of using the L\'evy--Khintchine exponent $\Psi$, it is convenient to work with $\psi(\xi) = \Psi(\sqrt{\xi})$. Recall that when $X$ has completely monotone jumps, then $\psi$ is a complete Bernstein function. In the remaining part of the article $\Psi$ is virtually dropped from the notation. For reader's convenience, we note that
\formula[eq:psipsi]{
 \Psi(\xi) & = \psi(\xi^2), & \frac{\xi \Psi'(\xi)}{\Psi(\xi)} & = 2 \, \frac{\xi^2 \psi'(\xi^2)}{\psi(\xi^2)} \, , & \frac{\xi \Psi''(\xi)}{\Psi'(\xi)} & = 1 + 2 \, \frac{\xi^2 \psi''(\xi^2)}{\psi'(\xi^2)} \, ,
}
so that the scaling-type condition~\eqref{eq:reg:1} translates to
\formula{
 \frac{-\xi \psi''(\xi)}{\psi'(\xi)} & \le \frac{2 - \alpha}{2} \, .
}
To facilitate extensions, all intermediate results are stated for rather general functions~$\psi$. For this reason, statements of the results often contain assumptions, such as differentiability or monotonicity of $\psi$, which are automatically satisfied when $\psi$ corresponds to a symmetric L\'evy process with completely monotone jumps (that is, $\psi$ is a complete Bernstein function). In particular, in this more general setting, a two-sided scaling-type condition
\formula[eq:reg]{
 \frac{2 - \beta}{2} \le \frac{-\xi \psi''(\xi)}{\psi'(\xi)} & \le \frac{2 - \alpha}{2}
}
is often imposed. When $\psi$ is a complete Bernstein function, the lower bound in~\eqref{eq:reg} always holds with $\beta = 2$ (see~\cite[Proposition~2.21]{bib:k11}).

It should be pointed out that although we follow closely the approach of~\cite{bib:kmr12}, there are essential differences between the present problem and the one considered therein. The overall form of the generalised eigenfunctions is similar (sine term plus completely monotone correction $G_\lambda$), but the expressions for $\thet_\lambda$ and $G_\lambda$ are different, and thus require different methods. For example, the estimates of $G_\lambda$ in~\cite{bib:kmr12} follow easily from the expression for the Laplace transform of $G_\lambda$. We were unable to follow the same approach and needed to use Fourier transform instead. Also the technical details of the arguments are different, so virtually no part of~\cite{bib:kmr12} can be re-used in our setting.

%
%

\section{Preliminaries}
\label{sec:pre}

\revisionone{Throughout the article, by $c$, $c_1$, \revisiontwo{$c_2$, etc.} 
we denote positive constants. Dependence on a parameter $\alpha$ is always indicated by writing $c(\alpha)$\revisiontwo{,} etc.}

Following~\cite{bib:kmr12}, for $\lambda > 0$ and a continuous function $\psi : \hl \to \hl$ such that $\psi(\xi) \ne \psi(\lambda^2)$ when $\xi \ne \lambda^2$, we define
\formula{
 \psi_\lambda(\xi) & = \revisionone{\frac{1 - \tfrac{\xi}{\lambda^2}}{1 - \tfrac{\psi(\xi)}{\psi(\lambda^2)}}}
}
for $\xi > 0$, $\xi \ne \lambda^2$. This definition is extended continuously at $\xi = \lambda^2$ by $\psi_\lambda(\lambda^2) = \psi(\lambda^2) / (\lambda^2 \psi'(\lambda^2))$ whenever $\psi$ is differentiable at $\lambda^2$ and $\psi'(\lambda^2) > 0$. In this case we say that $\psi_\lambda$ is well-defined. 

If for some $\lambda > 0$ the function $\psi_\lambda$ is well-defined and $\psi_\lambda(\xi) \ne \psi_\lambda(\lambda^2)$ for $\xi \ne \lambda^2$, then $(\psi_\lambda)_\lambda$ can be defined, and
\formula[eq:psilambdalambda]{
 \frac{1}{(\psi_\lambda)_\lambda(\xi^2)} & = \frac{\lambda^2 \psi'(\lambda^2)}{\psi(\xi^2) - \psi(\lambda^2)} - \frac{\lambda^2}{\xi^2 - \lambda^2}
}
for $\xi > 0$, $\xi \ne \lambda^2$. Note that if $\psi : \hl \to \hl$ is twice differentiable and $\psi'(\xi) > 0$, $\psi''(\xi) < 0$ for all $\xi > 0$, then $\psi_\lambda$ is \revisionone{strictly} increasing for every $\lambda > 0$, and hence $(\psi_\lambda)_\lambda$ is well-defined \revisionone{and positive}. Furthermore, if $\psi$ is a complete Bernstein function \revisionone{(equivalently, if $\Psi(\xi) = \psi(\xi^2)$ is the L\'evy--Khintchine exponent of a symmetric L\'evy process with completely monotone jumps)}, then also $\psi_\lambda$ and $(\psi_\lambda)_\lambda$ are complete Bernstein functions (see~\cite{bib:k12, bib:ssv10}).

Below we list some rather elementary results used in the proofs of main results.

\begin{lemma}
\label{l:lambdalambda}
If $\psi, \tilde{\psi} : \hl \to \hl$ are twice differentiable, $\psi'(\xi), \tilde{\psi}'(\xi) > 0$ and $\psi''(\xi), \tilde{\psi}''(\xi) \le 0$ for all $\xi > 0$, and furthermore
\formula[eq:lambdalambda:1]{
 \frac{-\psi''(\xi)}{\psi'(\xi)} & \le \frac{-\tilde{\psi}''(\xi)}{\tilde{\psi}'(\xi)}
}
for all $\xi > 0$, then
\formula[eq:lambdalambda:2]{
 (\psi_\lambda)_\lambda(\xi^2) & \ge (\tilde{\psi}_\lambda)_\lambda(\xi^2)
}
for all $\lambda, \xi > 0$.
\end{lemma}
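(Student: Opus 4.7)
The plan is to reduce~\eqref{eq:lambdalambda:2} to a cleaner monotonicity comparison by exploiting the explicit formula~\eqref{eq:psilambdalambda}. Writing $u = \lambda^2$ and $v = \xi^2$, I would first use the positivity of $(\psi_\lambda)_\lambda(v)$ and $(\tilde{\psi}_\lambda)_\lambda(v)$ (noted immediately before the lemma) to pass to reciprocals. Substituting~\eqref{eq:psilambdalambda} on both sides, the common term $u/(v - u)$ cancels, and the problem reduces to
\formula{
  \frac{\psi'(u)}{\psi(v) - \psi(u)} & \le \frac{\tilde{\psi}'(u)}{\tilde{\psi}(v) - \tilde{\psi}(u)} \, .
}
Because the two factors $\psi(v) - \psi(u)$ and $\tilde{\psi}(v) - \tilde{\psi}(u)$ share the sign of $v - u$, their product is positive, so multiplying through by it and then dividing by $\psi'(u) \tilde{\psi}'(u) > 0$ recasts this in turn as
\formula{
  h(v) & := \frac{\psi(v) - \psi(u)}{\psi'(u)} \ge \frac{\tilde{\psi}(v) - \tilde{\psi}(u)}{\tilde{\psi}'(u)} =: \tilde{h}(v) \, .
}

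To establish $h(v) \ge \tilde{h}(v)$, I would rewrite the hypothesis~\eqref{eq:lambdalambda:1} as $(\log \psi')'(\xi) \ge (\log \tilde{\psi}')'(\xi)$ for all $\xi > 0$ and integrate. This yields $\psi'(w)/\psi'(u) \ge \tilde{\psi}'(w)/\tilde{\psi}'(u)$ when $w \ge u$, and the reverse inequality when $w \le u$ (the direction flips because the integral over $[w, u]$ picks up the opposite sign). Noting that $h'(w) = \psi'(w)/\psi'(u)$, $\tilde{h}'(w) = \tilde{\psi}'(w)/\tilde{\psi}'(u)$, and $h(u) = \tilde{h}(u) = 0$, one further integration gives
\formula{
  h(v) - \tilde{h}(v) & = \int_u^v \expr{\frac{\psi'(w)}{\psi'(u)} - \frac{\tilde{\psi}'(w)}{\tilde{\psi}'(u)}} \D w \ge 0 \, ,
}
since for $v > u$ the integrand is non-negative on $[u, v]$, while for $v < u$ it is non-positive on $[v, u]$ and reversing the limits restores the correct sign. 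The boundary case $v = u$ follows by continuity of $(\psi_\lambda)_\lambda$ and $(\tilde{\psi}_\lambda)_\lambda$ at $\lambda^2$.

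I expect the main obstacle to be the sign bookkeeping across the two regimes $v > u$ and $v < u$: the inequality between the rational expressions $\psi'(u)/(\psi(v)-\psi(u))$ and $\tilde{\psi}'(u)/(\tilde{\psi}(v)-\tilde{\psi}(u))$ does not change direction under the reduction to $h \ge \tilde{h}$ precisely because the two denominators change sign simultaneously, and the final integration produces the correct inequality in both regimes thanks to a matching swap of the limits of integration when $v < u$. Once these two sign checks are in place, the rest of the argument is a routine double application of the fundamental theorem of calculus, translating the pointwise order between the logarithmic derivatives of $\psi'$ and $\tilde{\psi}'$ into a pointwise order between the normalised increment functions $h$ and $\tilde{h}$.
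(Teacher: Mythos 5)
Your proof is correct and follows essentially the same approach as the paper's: both pass to the reciprocal identity~\eqref{eq:psilambdalambda}, cancel the common term $\lambda^2/(\xi^2-\lambda^2)$, and reduce the statement to the comparison of normalised increments $(\psi(\xi_2)-\psi(\xi_1))/\psi'(\xi_1) \ge (\tilde\psi(\xi_2)-\tilde\psi(\xi_1))/\tilde\psi'(\xi_1)$, which is obtained by integrating the hypothesis $(\log\psi')' \ge (\log\tilde\psi')'$ twice. The only cosmetic difference is that the paper proves the case $\lambda < \xi$ and then says "a similar argument" for $\xi < \lambda$, whereas you handle both regimes in one pass by noting the product $(\psi(v)-\psi(u))(\tilde\psi(v)-\tilde\psi(u))$ is always positive and that reversing the limits of integration absorbs the sign flip.
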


\begin{proof}
Integration of~\eqref{eq:lambdalambda:1} in $\xi$ gives
\formula{
 \frac{\psi'(\zeta)}{\psi'(\xi_1)} & \ge \frac{\tilde{\psi}'(\zeta)}{\tilde{\psi}'(\xi_1)}
}
when $0 < \xi_1 < \zeta$. By another integration in $\zeta$,
\formula{
 \frac{\psi(\xi_2) - \psi(\xi_1)}{\psi'(\xi_1)} & \ge \frac{\tilde{\psi}(\xi_2) - \tilde{\psi}(\xi_1)}{\tilde{\psi}'(\xi_1)}
}
when $0 < \xi_1 < \xi_2$. Substituting $\xi_1 = \lambda^2$ and $\xi_2 = \xi^2$, one gets
\formula{
 \frac{\lambda^2 \psi'(\lambda^2)}{\psi(\xi^2) - \psi(\lambda^2)} - \frac{\lambda^2}{\xi^2 - \lambda^2} & \le \frac{\lambda^2 \tilde{\psi}'(\lambda^2)}{\tilde{\psi}(\xi^2) - \tilde{\psi}(\lambda^2)} - \frac{\lambda^2}{\xi^2 - \lambda^2} \, ,
}
that is, \eqref{eq:lambdalambda:2}, provided that $0 < \lambda < \xi$. A similar argument can be given when $0 < \xi < \lambda$. The case $\lambda = \xi > 0$ follows by continuity.
\end{proof}

\begin{lemma}
\label{l:ratioest}
If $\psi : \hl \to \hl$ is twice differentiable, $\psi'(\xi) > 0$ for all $\xi > 0$, and \revisionone{the scaling-type condition~\eqref{eq:reg} holds} for some $\alpha, \beta > 0$ and all $\xi > 0$, then
\formula[eq:ratioest:2]{
 \frac{\alpha}{2} & \le \frac{\xi \psi'(\xi)}{\psi(\xi) - \psi(0^+)} \le \frac{\beta}{2}
}
for all $\xi > 0$, and
\formula[eq:ratioest:3]{
 \expr{\frac{\xi_1}{\xi_2}}^{1 - \frac{\alpha}{2}} & \le \frac{\psi'(\xi_2)}{\psi'(\xi_1)} \le \expr{\frac{\xi_1}{\xi_2}}^{1 - \frac{\beta}{2}} , \qquad & \expr{\frac{\xi_2}{\xi_1}}^{\frac{\alpha}{2}} & \le \frac{\psi(\xi_2) - \psi(0^+)}{\psi(\xi_1) - \psi(0^+)} \le \expr{\frac{\xi_2}{\xi_1}}^{\frac{\beta}{2}}
}
whenever $0 < \xi_1 < \xi_2$.
\end{lemma}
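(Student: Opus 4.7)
The plan is to derive all four inequalities by successive integration of the assumed two‑sided bound. First I would rewrite the scaling‑type condition~\eqref{eq:reg} as
\formula{
 \frac{\alpha - 2}{2 \xi} & \le \frac{\psi''(\xi)}{\psi'(\xi)} = (\log \psi'(\xi))' \le \frac{\beta - 2}{2 \xi} \, ,
}
which is legitimate because $\psi'(\xi) > 0$. Integrating this chain from $\xi_1$ to $\xi_2$ (with $0 < \xi_1 < \xi_2$) and exponentiating gives
\formula{
 \expr{\frac{\xi_2}{\xi_1}}^{\frac{\alpha - 2}{2}} & \le \frac{\psi'(\xi_2)}{\psi'(\xi_1)} \le \expr{\frac{\xi_2}{\xi_1}}^{\frac{\beta - 2}{2}} ,
}
which is exactly the first inequality in~\eqref{eq:ratioest:3} after rewriting the exponents.

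Next, to obtain~\eqref{eq:ratioest:2}, I would fix $\xi > 0$ and apply the inequality just proved with $\xi_1 = s \in (0, \xi)$ and $\xi_2 = \xi$, yielding
\formula{
 \psi'(\xi) \expr{\frac{s}{\xi}}^{\frac{\beta - 2}{2}} & \le \psi'(s) \le \psi'(\xi) \expr{\frac{s}{\xi}}^{\frac{\alpha - 2}{2}}
}
for $s \in (0, \xi)$. Since $\psi$ is increasing and takes values in $\hl$, $\psi(0^+)$ exists and is non‑negative, and $\int_0^\xi \psi'(s) \, \D s = \psi(\xi) - \psi(0^+)$. Because $\alpha, \beta > 0$, the functions $(s/\xi)^{(\alpha-2)/2}$ and $(s/\xi)^{(\beta-2)/2}$ are integrable on $(0, \xi)$, and a direct calculation gives $\int_0^\xi (s/\xi)^{(\alpha-2)/2} \D s = 2 \xi / \alpha$ and similarly with $\beta$. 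Integrating the displayed inequality in $s$ over $(0, \xi)$ therefore yields
\formula{
 \frac{2}{\beta} \, \xi \psi'(\xi) & \le \psi(\xi) - \psi(0^+) \le \frac{2}{\alpha} \, \xi \psi'(\xi) ,
}
which is equivalent to~\eqref{eq:ratioest:2}.

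Finally, since $\psi(\xi) - \psi(0^+) > 0$ and is differentiable on $\hl$, I would rewrite~\eqref{eq:ratioest:2} as
\formula{
 \frac{\alpha}{2 \xi} & \le \frac{\psi'(\xi)}{\psi(\xi) - \psi(0^+)} = \bigl( \log (\psi(\xi) - \psi(0^+)) \bigr)' \le \frac{\beta}{2 \xi}
}
and integrate once more from $\xi_1$ to $\xi_2$, obtaining the second inequality in~\eqref{eq:ratioest:3}. The only slightly delicate point is ensuring integrability at $s = 0^+$ in the second step (which forces the hypothesis $\alpha, \beta > 0$) and checking that $\psi(0^+)$ is finite so that the integration constants match up; both are immediate from $\psi : \hl \to \hl$ and monotonicity of $\psi$. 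Everything else is elementary calculus, so I do not expect any serious obstacle.
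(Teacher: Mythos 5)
Your proof is correct and follows essentially the same chain of integrations as the paper: rewrite~\eqref{eq:reg} as a two-sided bound on $(\log\psi'(\xi))'$, integrate once to get the first pair of inequalities in~\eqref{eq:ratioest:3}, integrate the resulting bound on $\psi'(s)/\psi'(\xi)$ over $s\in(0,\xi)$ to get~\eqref{eq:ratioest:2}, then rewrite~\eqref{eq:ratioest:2} as a bound on $(\log(\psi(\xi)-\psi(0^+)))'$ and integrate once more for the second pair. The only cosmetic difference is that the paper states the lower bounds explicitly and notes the upper bounds are analogous, whereas you spell out both sides; the logical structure and the key calculus steps are the same.
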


\begin{proof}
By~\revisionone{\eqref{eq:reg}}, if $0 < \xi_1 < \xi_2$,
\formula{
 \log \expr{\frac{\xi_2}{\xi_1}}^{1 - \frac{\alpha}{2}} & = \int_{\xi_1}^{\xi_2} \frac{1 - \tfrac{\alpha}{2}}{\zeta} \, \D \zeta \ge \int_{\xi_1}^{\xi_2} \frac{-\psi''(\zeta)}{\psi'(\zeta)} \, \D \zeta = \log \frac{\psi'(\xi_1)}{\psi'(\xi_2)} \, ,
}
proving the lower bound in the first part of~\eqref{eq:ratioest:3}. Hence,
\formula{
 \frac{\xi_2}{\tfrac{\alpha}{2}} & = \int_0^{\xi_2} \expr{\frac{\xi_2}{\xi_1}}^{1 - \frac{\alpha}{2}} \D \xi_1 \ge \int_0^{\xi_2} \frac{\psi'(\xi_1)}{\psi'(\xi_2)} \, \D \xi_1 = \frac{\psi(\xi_2) - \psi(0^+)}{\psi'(\xi_2)} \, ,
}
which shows the lower bound in~\eqref{eq:ratioest:2}. Furthermore,
\formula{
 \log \expr{\frac{\xi_2}{\xi_1}}^{\frac{\alpha}{2}} & = \int_{\xi_1}^{\xi_2} \frac{\tfrac{\alpha}{2}}{\zeta} \D \zeta \le \int_{\xi_1}^{\xi_2} \frac{\psi'(\zeta)}{\psi(\zeta) - \psi(0^+)} \, \D \zeta = \log \frac{\psi(\xi_2) - \psi(0^+)}{\psi(\xi_1) - \psi(0^+)} \, ,
}
proving the other lower bound in~\eqref{eq:ratioest:3}. The upper bounds are proved in the same way.
\end{proof}

\revisionone{When $\psi(\xi^2)$ is the L\'evy--Khintchine exponent of a L\'evy process, then $\psi(0^+) = 0$. Hence, the latter part of~\eqref{eq:ratioest:3} takes \revisiontwo{the simpler} form}
\formula{
 \expr{\frac{\xi_2}{\xi_1}}^{\frac{\alpha}{2}} & \le \frac{\psi(\xi_2)}{\psi(\xi_1)} \le \expr{\frac{\xi_2}{\xi_1}}^{\frac{\beta}{2}} \revisionone{.}
}
\revisionone{Note that in this case}
\formula[eq:invest:2]{
 \expr{\frac{t_2}{t_1}}^{\frac{2}{\beta}} & \le \frac{\psi^{-1}(t_2)}{\psi^{-1}(t_1)} \le \expr{\frac{t_2}{t_1}}^{\frac{2}{\alpha}}
}
for all $t_1, t_2 > 0$ such that $t_1 < t_2$.

\begin{lemma}
\label{l:intlim}
If $g : \hl \to \hl$ is integrable and decreasing, then
\formula{
 \lim_{\xi \to \infty} (\xi g(\xi)) & = 0 .
}
\end{lemma}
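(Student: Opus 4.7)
The plan is to exploit monotonicity of $g$ to bound $\xi g(\xi)$ by a tail integral, which must vanish because $g$ is integrable. Concretely, I would fix $\xi > 0$ and use that $g$ is decreasing to observe that $g(s) \ge g(\xi)$ for every $s \in [\xi/2, \xi]$. Integrating this inequality over the interval $[\xi/2, \xi]$, whose length is $\xi/2$, yields
\formula{
 \frac{\xi}{2} \, g(\xi) & \le \int_{\xi/2}^{\xi} g(s) \, \D s .
}

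Next, since $g$ is integrable on $\hl$, the tail $\int_a^\infty g(s) \, \D s$ tends to $0$ as $a \to \infty$. In particular,
\formula{
 \int_{\xi/2}^{\xi} g(s) \, \D s & \le \int_{\xi/2}^{\infty} g(s) \, \D s \longrightarrow 0 \qquad \text{as } \xi \to \infty .
}
Combining the two displays gives $\xi g(\xi) \to 0$ as $\xi \to \infty$, as desired.

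There is essentially no obstacle here; the only subtlety is the choice of the interval of integration. Using $[\xi/2, \xi]$ (rather than, say, $[\xi, 2\xi]$) is convenient because it only requires integrability of $g$ near infinity and relies solely on the monotonicity of $g$ on $\hl$, with no additional regularity or positivity assumption beyond what is stated.
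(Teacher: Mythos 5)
Your proof is correct, and it takes a different route from the paper. The paper writes $\xi g(\xi) = \int_0^\infty g(\xi)\,\ind_{(0,\xi)}(\zeta)\,\D\zeta$, notes first that $g(\xi)\to 0$ (as an integrable decreasing function), dominates the integrand by $g(\zeta)$, and invokes the Dominated Convergence Theorem. You instead integrate the inequality $g(\xi)\le g(s)$ over $s\in[\xi/2,\xi]$ to get $\tfrac{\xi}{2}\,g(\xi)\le\int_{\xi/2}^{\xi}g(s)\,\D s$, and then let the tail of a convergent integral do the work. Your argument is more elementary: it avoids DCT entirely and needs only that the tail $\int_a^\infty g \to 0$, which follows directly from integrability. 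Both proofs rest on the same monotonicity observation (comparing $g(\xi)$ to values of $g$ on an interval to its left), so the underlying idea is shared, but yours packages it as a direct two-line estimate rather than a convergence theorem. There is nothing missing; the choice of $[\xi/2,\xi]$ is indeed a clean way to localise near infinity.
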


\begin{proof}
\revisionone{As an integrable and decreasing function, $g(\xi)$ converges to $0$ as $\xi \to \infty$.} Since $g(\xi) \ind_{(0, \xi)}(\zeta) \le g(\zeta)$ for all $\xi, \zeta > 0$, by \revisiontwo{the Dominated Convergence Theorem}
,
\formula{
 \lim_{\xi \to \infty} (\xi g(\xi)) & = \lim_{\xi \to \infty} \int_0^\infty g(\xi) \ind_{(0, \xi)}(\zeta) \D \zeta = 0 . \qedhere
}
\end{proof}

\begin{lemma}
\label{l:gest}
If $g : \R \to \hl$ is integrable and decreasing on $\hl$, and $g(\xi) = g(-\xi)$ for $\xi > 0$, then
\formula[eq:gest]{
 \frac{1}{2} \int_0^\infty \min(\xi^2 x^2, 4) g(\xi) \D \xi & \le \fourier g(0) - \fourier g(x) \le \int_0^\infty \min(\xi^2 x^2, 4) g(\xi) \D \xi
}
for all $x \in \R$. Furthermore,
\formula[eq:gmodulus]{
 \tabs{\fourier g(x_1) - \fourier g(x_2)} & \le \int_0^\infty \min(\xi \tabs{x_1 - x_2}, 2) \min(\xi \tabs{x_1 + x_2}, 2) g(\xi) \D \xi
}
for all $x_1, x_2 \in \R$.
\end{lemma}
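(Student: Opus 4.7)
The plan is to use symmetry of $g$ to rewrite $\fourier g$ as cosine integrals over $\hl$, then split each inequality into a pointwise part and, where needed, a part that uses monotonicity of $g$.

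For~\eqref{eq:gest}, evenness of $g$ gives $\fourier g(0) - \fourier g(x) = 2\int_0^\infty (1 - \cos(\xi x)) g(\xi) \D \xi$. The upper bound follows from the pointwise estimate $2(1 - \cos(\xi x)) \le \min(\xi^2 x^2, 4)$. The lower bound cannot be obtained pointwise because $\cos(\xi x)$ oscillates, so I exploit that $g$ is decreasing. Since $g$ is integrable and decreasing on $\hl$, there is a non-negative Borel measure $\mu$ on $\hl$ with $g(\xi) = \mu((\xi, \infty))$ (as $g(\infty^-) = 0$). Fubini's theorem then gives
\formula{
 \int_0^\infty (1 - \cos(\xi x)) g(\xi) \D \xi & = \int_0^\infty G(\eta) \D \mu(\eta), \\
 \int_0^\infty \min(\xi^2 x^2, 4) g(\xi) \D \xi & = \int_0^\infty \Phi(\eta) \D \mu(\eta),
}
where $G(\eta) = \eta - \sin(\eta x)/x$ and $\Phi(\eta) = \int_0^\eta \min(\xi^2 x^2, 4) \D \xi$. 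The lower bound thus reduces to the pointwise claim $4 G(\eta) \ge \Phi(\eta)$ for $\eta > 0$, which I verify by cases on whether $\tabs{\eta x} \le 2$ or $\tabs{\eta x} > 2$. In the first case $\Phi(\eta) = \eta^3 x^2/3$ and the claim becomes $t - \sin t \ge t^3/12$ on $t \in [0, 2]$ (with $t = \tabs{\eta x}$); this follows from $h(0) = 0$ and $h'(t) = 1 - \cos t - t^2/4 \ge 0$ on $[0, 2]$, the latter being an elementary consequence of $\sin(s)/s \ge 1/\sqrt{2}$ on $s \in [0, 1]$. In the second case $\Phi(\eta) = 4\eta - 16/(3\tabs{x})$ and the claim reduces to $\tabs{\sin(\eta x)} \le 4/3$, which is trivial.

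For~\eqref{eq:gmodulus}, no monotonicity of $g$ is needed. The product-to-sum identity $\cos A - \cos B = -2 \sin\tfrac{A+B}{2} \sin\tfrac{A-B}{2}$, applied with $A = \xi x_1$ and $B = \xi x_2$, combined with the pointwise estimate $2\tabs{\sin(t/2)} \le \min(\tabs{t}, 2)$ applied to each factor, gives
\formula{
 2\tabs{\cos(\xi x_1) - \cos(\xi x_2)} & \le \min(\xi \tabs{x_1 + x_2}, 2) \min(\xi \tabs{x_1 - x_2}, 2)
}
pointwise in $\xi > 0$. Integrating against $g(\xi)$ and using the symmetric-cosine representation $\fourier g(x_1) - \fourier g(x_2) = 2 \int_0^\infty (\cos(\xi x_1) - \cos(\xi x_2)) g(\xi) \D \xi$ yields~\eqref{eq:gmodulus}.

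The principal obstacle is the lower bound in~\eqref{eq:gest}: the pointwise comparison fails, so the monotonicity of $g$ must be used through the Fubini rewriting, reducing matters to the elementary but non-obvious estimate $t - \sin t \ge t^3/12$ on $[0, 2]$. The constants $1/2$ and $4/3$ depend on this case analysis, and verifying them is the only non-routine step.
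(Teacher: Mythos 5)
Your proof is correct and follows essentially the same strategy as the paper: evenness reduces the Fourier transform to a cosine integral, the upper bound and the modulus estimate~\eqref{eq:gmodulus} are pointwise, and the lower bound requires passing to the measure $-\D g$ because a pointwise comparison fails. The only organizational difference is in the lower bound: the paper integrates by parts once (to land on $\int G(\eta)(-\D g)(\eta)$ with $G(\eta) = \eta - \sin(\eta x)/x$), applies a pointwise inequality to an intermediate continuous majorant, and then integrates by parts a second time to return to an integral against $g$; you instead apply the Fubini/layer-cake representation to both sides and compare the two $\eta$-level kernels $4G(\eta)$ and $\Phi(\eta)$ in one step, which reduces matters to the single calculus estimate $t - \sin t \ge t^3/12$ on $[0,2]$ rather than the paper's $t - \sin t \ge t^3/8$. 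This is a modest streamlining of the same idea (and avoids the need to check the boundary terms in a second integration by parts), but the key insight — that monotonicity of $g$ must be used via $\mu = -\D g$ — is identical.
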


\begin{proof}
Fix $x > 0$. By symmetry of $g$,
\formula{
 \fourier g(0) - \fourier g(x) & = 2 \int_0^\infty (1 - \cos(\xi x)) g(\xi) \D \xi \revisiontwo{.}
}
Clearly, $1 - \cos(\xi x) \le 2$ and $1 - \cos(\xi x) = 2 \sin (\tfrac{\xi x}{2})^2 \le \tfrac{1}{2} \xi^2 x^2$. Therefore,
\formula{
 \fourier g(0) - \fourier g(x) & \le \int_0^{\frac{2}{x}} \xi^2 x^2 g(\xi) \D \xi + \int_{\frac{2}{x}}^\infty 4 g(\xi) \D \xi .
}
For the lower bound, integration by parts gives
\formula{
 \fourier g(0) - \fourier g(x) & = 2 \lim_{\xi \to \infty} \expr{(\xi - \tfrac{1}{x} \sin(\xi x)) g(\xi)} + 2 \int_0^\infty (\xi - \tfrac{1}{x} \sin(\xi x)) (-\D g(\xi)) ,
}
where the \revisiontwo{integral} 
in the right-hand side is a Lebesgue--Stieltjes one (if $g$ is differentiable, then $(-\D g(\xi)) = (-g'(\xi)) \D \xi$). By \revisionone{Lemma}~\ref{l:intlim}, the limit in the right-hand side is $0$. Furthermore, $(-\D g(\xi))$ is a non-negative measure on $(0, \infty)$, and one easily verifies that $\xi - \tfrac{1}{x} \sin(\xi x) \ge \tfrac{1}{8} \xi^3 x^2$ for $\xi \in (0, \tfrac{2}{x})$ and $\xi - \tfrac{1}{x} \sin(\xi x) \ge \xi - \tfrac{1}{x}$ for $\xi \in (\tfrac{2}{x}, \infty)$. Hence,
\formula{
 \fourier g(0) - \fourier g(x) & \ge \int_0^{\frac{2}{x}} \frac{\xi^3 x^2}{4} \, (-\D g(\xi)) + \int_{\frac{2}{x}}^\infty 2 (\xi - \tfrac{1}{x}) (-\D g(\xi)) .
}
The function $\tfrac{1}{4} \xi^3 x^2 \ind_{(0, 2/x)}(\xi) + 2 (\xi - \tfrac{1}{x}) \ind_{[2/x, \infty)}(\xi)$ is continuous at $\xi = \tfrac{2}{x}$. Therefore, another integration by parts gives
\formula{
 \fourier g(0) - \fourier g(x) & \ge \int_0^{\frac{2}{x}} \tfrac{3}{4} \xi^2 x^2 g(\xi) \D \xi + \int_{\frac{2}{x}}^\infty 2 g(\xi) \D \xi .
}
It follows that
\formula{
 \fourier g(0) - \fourier g(x) & \ge \frac{1}{2} \expr{\int_0^{\frac{2}{x}} \xi^2 x^2 g(\xi) \D \xi + \int_{\frac{2}{x}}^\infty 4 g(\xi) \D \xi} ,
}
as desired. The estimates~\eqref{eq:gest} for $x < 0$ follow by symmetry.

In a similar manner, for $x_1, x_2 \in \R$,
\formula{
 \tabs{\fourier g(x_1) - \fourier g(x_2)} & \le 2 \int_0^\infty \tabs{\cos(\xi x_1) - \cos(\xi x_2)} g(\xi) \D \xi \\
 & = 4 \int_0^\infty \tabs{\sin \tfrac{\xi x_1 - \xi x_2}{2}} \tabs{\sin \tfrac{\xi x_1 + \xi x_2}{2}} g(\xi) \D \xi ,
}
and~\eqref{eq:gmodulus} follows from $\tabs{\sin s} \le \min(s, 1)$ for $s > 0$.
\end{proof}

\begin{lemma}
\label{l:intest}
If $\psi : \hl \to \hl$ and $\xi / \psi(\xi)$ is increasing in $\xi > 0$, then
\formula[eq:intest]{
 \int_0^\xi \frac{\zeta^2}{\psi(\zeta^2)} \, \D \zeta & \le \xi^2 \int_\xi^\infty \frac{1}{\psi(\zeta^2)} \, \D \zeta
}
for all $\xi > 0$.
\end{lemma}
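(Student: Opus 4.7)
The plan is to rewrite both sides of \eqref{eq:intest} in terms of the single monotone function $f(\zeta) = \zeta^2/\psi(\zeta^2)$, which is increasing in $\zeta > 0$ by hypothesis (since $\xi \mapsto \xi/\psi(\xi)$ is increasing and $\zeta \mapsto \zeta^2$ is increasing for $\zeta > 0$).

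With this notation, the left-hand side of \eqref{eq:intest} is simply $\int_0^\xi f(\zeta) \D \zeta$. For the right-hand side, I would pull out a factor of $\zeta^2$ from the integrand to write
\formula{
 \xi^2 \int_\xi^\infty \frac{1}{\psi(\zeta^2)} \, \D \zeta & = \xi^2 \int_\xi^\infty \frac{f(\zeta)}{\zeta^2} \, \D \zeta .
}
Now monotonicity of $f$ does the rest: on the interval $(0, \xi)$ we have $f(\zeta) \le f(\xi)$, so $\int_0^\xi f(\zeta) \D \zeta \le \xi f(\xi)$; on $(\xi, \infty)$ we have $f(\zeta) \ge f(\xi)$, so
\formula{
 \xi^2 \int_\xi^\infty \frac{f(\zeta)}{\zeta^2} \, \D \zeta & \ge \xi^2 f(\xi) \int_\xi^\infty \frac{\D \zeta}{\zeta^2} = \xi f(\xi) .
}
Chaining these two inequalities through the common quantity $\xi f(\xi)$ gives \eqref{eq:intest}.

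There is no real obstacle here; the only thing to notice is the bookkeeping trick of factoring $1/\psi(\zeta^2) = (1/\zeta^2)\cdot f(\zeta)$ on the right so that both sides are integrals against the \emph{same} monotone function, at which point the constant $\xi f(\xi)$ provides the bridge.
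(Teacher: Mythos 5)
Your proof is correct and is essentially the paper's own argument: both bound the two sides through the common intermediate quantity $\xi^3/\psi(\xi^2) = \xi f(\xi)$ by exploiting monotonicity of $\zeta^2/\psi(\zeta^2)$ on each of the two intervals. The only difference is cosmetic — you introduce the notation $f$ and factor the right-hand integrand explicitly, while the paper works directly with the two pointwise inequalities.
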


\begin{proof}
When $0 < \zeta < \xi$, then $\zeta^2 / \psi(\zeta^2) \le \xi^2 / \psi(\xi^2)$, and so
\formula{
 \int_0^\xi \frac{\zeta^2}{\psi(\zeta^2)} \, \D \zeta & \le \int_0^\xi \frac{\xi^2}{\psi(\xi^2)} \, \D \zeta = \frac{\xi^3}{\psi(\xi^2)} \, .
}
When $0 < \xi < \zeta$, then $\zeta / \psi(\zeta^2) \ge \xi^2 / \psi(\xi^2)$, so that
\formula{
 \int_\xi^\infty \frac{1}{\psi(\zeta^2)} \, \D \zeta & \ge \int_\xi^\infty \frac{\xi^2}{\zeta^2 \psi(\xi^2)} \, \D \zeta = \frac{\xi}{\psi(\xi^2)} \, .
}
Formula~\eqref{eq:intest} follows.
\end{proof}

\begin{lemma}
\label{l:greg}
If $g : \R \to \R$ is integrable and regularly varying at infinity with index $-\gamma$ for $\gamma \in (1, 3)$, and $g(x) = g(-x)$ for $x > 0$, then
\formula{
 \lim_{x \to 0^+} \expr{\frac{x}{g(1 / x)} \, (\fourier g(0) - \fourier g(x))} & = \frac{\pi}{\Gamma(\gamma) \tabs{\cos \tfrac{\gamma \pi}{2}}} \, .
}
\end{lemma}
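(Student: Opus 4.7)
The plan is a change of variables combined with dominated convergence. By the symmetry of $g$,
$$\fourier g(0) - \fourier g(x) = 2\int_0^\infty g(s)(1-\cos(sx))\,\D s$$
for $x > 0$. I would fix $A > 0$ large enough that $g$ is positive and regularly varying on $[A, \infty)$. On the range $s \in (0, A)$, the bound $1 - \cos(sx) \le (sx)^2/2$ gives
$$\tabs{\int_0^A g(s)(1-\cos(sx))\,\D s} \le \frac{A^2 x^2}{2}\int_0^A \tabs{g(s)}\,\D s = O(x^2).$$
Since $g(1/x) \sim x^\gamma L(1/x)$ for some slowly varying $L$, the ratio $x^2/(g(1/x)/x) = x^{3-\gamma}/L(1/x)$ tends to $0$ because $\gamma < 3$, so this part of the integral is negligible compared with $g(1/x)/x$.

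For the range $s \in (A, \infty)$, substituting $u = sx$ yields
$$\frac{x}{g(1/x)}\int_A^\infty g(s)(1-\cos(sx))\,\D s = \int_{Ax}^\infty \frac{g(u/x)}{g(1/x)}(1-\cos u)\,\D u.$$
The integrand converges pointwise to $u^{-\gamma}(1-\cos u)$ on $(0, \infty)$ by regular variation of $g$. To pass to the limit I would apply the dominated convergence theorem via Potter's bound: for any $\epsilon > 0$ sufficiently small (so that $\gamma - \epsilon > 1$ and $\gamma + \epsilon < 3$), there are constants $x_0, C > 0$ with
$$\frac{g(u/x)}{g(1/x)} \le C\max\bigl(u^{-\gamma+\epsilon},\, u^{-\gamma-\epsilon}\bigr)$$
whenever $0 < x < x_0$ and $u > Ax$. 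The corresponding dominating function is integrable on $(0, \infty)$: near zero it is of order $u^{2-\gamma-\epsilon}$, which is integrable since $\gamma + \epsilon < 3$, and near infinity of order $u^{-\gamma+\epsilon}$, which is integrable since $\gamma - \epsilon > 1$.

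It then remains to evaluate $I := \int_0^\infty u^{-\gamma}(1-\cos u)\,\D u$. Integration by parts (the boundary terms vanish for $\gamma \in (1, 3)$) together with the classical Mellin identity $\int_0^\infty u^{s-1}\sin u\,\D u = \Gamma(s)\sin(s\pi/2)$ valid for $s \in (-1, 1)$ yields $I = -\Gamma(1-\gamma)\sin(\gamma\pi/2)$; Euler's reflection formula combined with $\sin(\gamma\pi) = 2\sin(\gamma\pi/2)\cos(\gamma\pi/2)$ then rewrites this as $\pi/(2\Gamma(\gamma)\tabs{\cos(\gamma\pi/2)})$. Multiplying by $2$ gives the claimed formula. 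The main technical issue is the dominated convergence step: Potter's bound applies only to arguments above a threshold, which is why I split off the small-$s$ part and dispatch it by the elementary estimate $1-\cos(\cdot) \le (\cdot)^2/2$; after this preparatory step the argument is routine.
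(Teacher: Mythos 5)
Your proof is correct, but it takes a genuinely different (more self-contained) route than the paper. The paper dispatches the asymptotics in one line by citing Theorem~5 of Soni--Soni (1975) on cosine transforms of regularly varying functions, which gives $\lim_{x\to 0^+}\tfrac{x}{g(1/x)}(\fourier g(x)-\fourier g(0)) = 2\Gamma(1-\gamma)\sin\tfrac{\gamma\pi}{2}$, and then finishes with the reflection formula exactly as you do. You instead reprove the relevant instance of that theorem from scratch: splitting off the bounded range where $1-\cos(sx)=O(s^2x^2)$, substituting $u=sx$ in the tail, and passing to the limit by dominated convergence via Potter's bound, then evaluating $\int_0^\infty u^{-\gamma}(1-\cos u)\,\D u$ by parts and a Mellin sine-transform identity. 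The net effect is the same; your version buys self-containment at the cost of length, while the paper's buys brevity at the cost of a nontrivial external reference.

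Two small points worth tightening. First, the Potter threshold $X_0$ (which depends on the chosen $\epsilon$) need not be dominated by the $A$ you fixed beforehand; you should split at $\max(A, X_0)$ rather than at $A$, or remark that enlarging $A$ after fixing $\epsilon$ costs nothing because the elementary $O(x^2)$ estimate for the compact part works for any fixed split point. Second, at $\gamma=2$ the intermediate expressions $\Gamma(1-\gamma)$ and $\Gamma(s)\sin\tfrac{s\pi}{2}$ at $s=2-\gamma=0$ are of indeterminate form $\infty\cdot 0$; the Mellin identity and the subsequent reflection/double-angle manipulations should be understood in the limiting sense, exactly as the paper notes (``for $\gamma=2$ it is understood that the right-hand side equals $\pi$''). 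Once that is said, everything reconciles: the final constant $\pi/(\Gamma(\gamma)\tabs{\cos\tfrac{\gamma\pi}{2}})$ is continuous across $\gamma=2$ and equals $\pi$ there.
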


\begin{proof}
Clearly, $\fourier g(x) = 2 \int_0^\infty g(\xi) \cos(\xi x) \D \xi$. By~\cite[Theorem~5]{bib:ss75},
\formula{
 \lim_{x \to 0^+} \expr{\frac{x}{g(1 / x)} \, (\fourier g(x) - \fourier g(0))} & = 2 \Gamma(1 - \gamma) \sin \tfrac{\gamma \pi}{2} ,
}
where for $\gamma = 2$ it is understood that the right-hand side is equal to $\pi$. Furthermore, $\Gamma(1 - \gamma) \Gamma(\gamma) = \pi / \sin(\gamma \pi)$.
\end{proof}

%
%

\section{Estimates of $\thet_\lambda$}
\label{sec:theta}

Recall that
\formula[eq:thetalambda]{
 \thet_\lambda & = \arctan\expr{\frac{1}{\pi} \int_0^\infty \frac{2}{\lambda} \, \frac{1}{(\psi_\lambda)_\lambda(\xi^2)} \, \D \xi}
}
for $\lambda > 0$.

\begin{lemma}
\label{lem:thetalambdaest:reg}
If $\psi(\xi^2)$ is the L\'evy--Khintchine exponent of a symmetric L\'evy process, $\psi'(\xi) > 0$ for all $\xi > 0$ and \revisionone{the scaling-type condition~\eqref{eq:reg} holds} for some $\alpha, \beta \in [1, 2]$ and all $\xi > 0$, then
\formula{
 \frac{\pi}{\beta} - \frac{\pi}{2} & \le \thet_\lambda \le \frac{\pi}{\alpha} - \frac{\pi}{2}
}
for all $\lambda > 0$.
\end{lemma}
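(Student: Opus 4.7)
The strategy is to benchmark $\thet_\lambda$ against the values obtained for the power functions $\tilde\psi(\xi)=\xi^{\gamma/2}$, $\gamma\in[1,2]$, which saturate the bounds in the scaling-type condition~\eqref{eq:reg}: a direct calculation gives $\frac{-\xi\tilde\psi''(\xi)}{\tilde\psi'(\xi)}=\frac{2-\gamma}{2}$. The corresponding $\tilde\Psi(\xi)=\tilde\psi(\xi^2)=|\xi|^\gamma$ is the L\'evy--Khintchine exponent of a symmetric $\gamma$-stable process, for which $\thet_\lambda$ can be evaluated in closed form. Lemma~\ref{l:lambdalambda} will then translate the one-sided ordering of $\frac{-\psi''}{\psi'}$ into an ordering of the functions $(\psi_\lambda)_\lambda(\xi^2)$, which integrates up to the desired inequality between the $\thet_\lambda$'s, via formula~\eqref{eq:thetalambda}.

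First, for $\tilde\psi(\xi)=\xi^{\gamma/2}$ with $\gamma\in(1,2]$, I would combine~\eqref{eq:thetalambda} and~\eqref{eq:psilambdalambda} and perform the substitution $u=\xi/\lambda$; the parameter $\lambda$ factors out, leaving
\[
\tan\tilde\thet_\lambda = \frac{1}{\pi}\int_0^\infty\biggl(\frac{\gamma}{u^\gamma-1}-\frac{2}{u^2-1}\biggr)du.
\]
Interpreting each term as a separate Cauchy principal value and applying the classical identity
$\pvint_0^\infty\frac{du}{u^s-1}=-\frac{\pi}{s}\cot\frac{\pi}{s}$ (valid for $s>1$, obtainable from $u^s=v$ and the reflection formula for $\Gamma$) gives $\tan\tilde\thet_\lambda = -\cot(\pi/\gamma)=\tan(\pi/\gamma-\pi/2)$. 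Since $\pi/\gamma-\pi/2\in[0,\pi/2)$ for $\gamma\in(1,2]$ and $\arctan$ takes values in $(-\pi/2,\pi/2)$, this forces $\tilde\thet_\lambda=\pi/\gamma-\pi/2$.

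With this benchmark in hand, I would take $\tilde\psi(\xi)=\xi^{\alpha/2}$ for the upper bound: the upper half of~\eqref{eq:reg} is exactly $\frac{-\psi''(\xi)}{\psi'(\xi)}\le \frac{2-\alpha}{2\xi}=\frac{-\tilde\psi''(\xi)}{\tilde\psi'(\xi)}$, so Lemma~\ref{l:lambdalambda} yields $(\psi_\lambda)_\lambda(\xi^2)\ge(\tilde\psi_\lambda)_\lambda(\xi^2)$ for all $\xi>0$. Both sides are positive (by concavity of $\psi$ and $\tilde\psi$), so passing to reciprocals reverses the inequality; integrating in $\xi$ preserves it; and applying the (monotone) $\arctan$ gives $\thet_\lambda\le\tilde\thet_\lambda=\pi/\alpha-\pi/2$. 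The lower bound $\thet_\lambda\ge\pi/\beta-\pi/2$ follows by the symmetric argument with $\tilde\psi(\xi)=\xi^{\beta/2}$ and the opposite half of~\eqref{eq:reg}. The degenerate cases $\alpha=1$ or $\beta=1$ correspond to the Cauchy benchmark, for which the stable integral diverges and the bound is either automatic (upper: $\thet_\lambda<\pi/2$ always) or interpreted in the limit (lower, with $\arctan(+\infty)=\pi/2$). The only genuinely computational step is the explicit evaluation of the stable-case integral above; once it is in place, the comparison is essentially mechanical.
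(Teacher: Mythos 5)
Your proof is correct and follows essentially the same route as the paper: compare $\psi$ with the stable benchmark $\tilde\psi(\xi)=\xi^{\gamma/2}$ via Lemma~\ref{l:lambdalambda}, and then pass through the monotone integral in~\eqref{eq:thetalambda}. The only difference is that you derive the benchmark value $\tilde\thet_\lambda=\tfrac{\pi}{\gamma}-\tfrac{\pi}{2}$ directly from the principal-value identity $\pvint_0^\infty (u^s-1)^{-1}\,\D u=-\tfrac{\pi}{s}\cot\tfrac{\pi}{s}$, whereas the paper imports it from \cite[Example~5.1]{bib:kmr12}, so your version is self-contained.
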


\begin{proof}
If $\tilde{\psi}(\xi) = \xi^{\alpha/2}$, then $-\xi \tilde{\psi}''(\xi) / \tilde{\psi}'(\xi) = 1 - \tfrac{\alpha}{2}$. Hence, by \revisionone{Lemma}~\ref{l:lambdalambda},
\formula{
 (\psi_\lambda)_\lambda(\xi^2) & \ge (\tilde{\psi}_\lambda)_\lambda(\xi^2)
}
for all $\lambda, \xi > 0$. By~\eqref{eq:thetalambda}, it follows that $\thet_\lambda \le \tilde{\thet}_\lambda$, where $\tilde{\thet}_\lambda$ is defined as $\thet_\lambda$, but using $\tilde{\psi}$ instead of $\psi$. By~\cite[Example~5.1]{bib:kmr12}, $\tilde{\thet}_\lambda = \tfrac{\pi}{\alpha} - \tfrac{\pi}{2}$. This proves the upper bound. The lower one is obtained in a similar manner.
\end{proof}

\begin{lemma}
\label{lem:thetalambda:reg}
Suppose that $\psi(\xi^2)$ is the L\'evy--Khintchine exponent of a symmetric L\'evy process, $\psi'(\xi) > 0$ for all $\xi > 0$, and \revisionone{the scaling-type condition~\eqref{eq:reg} holds} for some $\alpha, \beta \in [1, 2]$ and all $\xi > 0$. If $\psi'$ is regularly varying at zero with index $\tfrac{\delta}{2} - 1$ for some $\delta \in [1, 2]$, then
\formula{
 \lim_{\lambda \to 0^+} \thet_\lambda & = \frac{\pi}{\delta} - \frac{\pi}{2} \, .
}
Similarly, if $\psi'$ is regularly varying at infinity with index $\tfrac{\gamma}{2} - 1$ for some $\gamma \in [1, 2]$, then
\formula{
 \lim_{\lambda \to \infty} \thet_\lambda & = \frac{\pi}{\gamma} - \frac{\pi}{2} \, .
}
\end{lemma}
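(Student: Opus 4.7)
The plan is to reduce to the stable (power-function) case by substituting $\xi = \lambda s$ in the defining integral~\eqref{eq:thetalambda} and passing to the limit via dominated convergence. After applying~\eqref{eq:psilambdalambda}, this substitution rewrites
\formula{
 \tan \thet_\lambda & = \frac{1}{\pi} \int_0^\infty 2 \left[\frac{\lambda^2 \psi'(\lambda^2)}{\psi(\lambda^2 s^2) - \psi(\lambda^2)} - \frac{1}{s^2 - 1}\right] \D s ,
}
in which $\lambda$ enters only through the ratios $\psi(\lambda^2 s^2)/\psi(\lambda^2)$ and $\lambda^2 \psi'(\lambda^2)/\psi(\lambda^2)$. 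Regular variation of $\psi'$ at $0$ with index $\tfrac{\delta}{2} - 1$, combined with $\psi(0^+) = 0$ (since $\psi(\xi^2)$ is a L\'evy--Khintchine exponent), implies via Karamata's theorem that $\psi$ is itself regularly varying at $0$ with index $\tfrac{\delta}{2}$ and that $\lambda^2 \psi'(\lambda^2)/\psi(\lambda^2) \to \tfrac{\delta}{2}$ as $\lambda \to 0^+$. Hence, for every $s \in \hl \setminus \{1\}$, the integrand converges pointwise to the stable-case expression $\delta/(s^\delta - 1) - 2/(s^2 - 1)$.

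For the domination, I would apply Lemma~\ref{l:lambdalambda} to compare $\psi$ with $\tilde\psi(\xi) = \xi^{\alpha/2}$, which satisfies $-\xi \tilde\psi''(\xi)/\tilde\psi'(\xi) = (2 - \alpha)/2$, matching the upper bound in~\eqref{eq:reg}. This yields $(\psi_\lambda)_\lambda(\xi^2) \ge (\tilde\psi_\lambda)_\lambda(\xi^2) > 0$, so the (positive) integrand is dominated by the $\lambda$-independent function $\tilde h(s) = \alpha/(s^\alpha - 1) - 2/(s^2 - 1)$. A short Taylor expansion shows that $\tilde h$ is bounded near $s = 1$, and an asymptotic check gives $\tilde h(s) \sim \alpha s^{-\alpha}$ as $s \to \infty$, so $\tilde h$ is integrable on $\hl$ as long as $\alpha > 1$. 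Dominated convergence, together with the stable-case identity $\int_0^\infty (\delta/(s^\delta - 1) - 2/(s^2 - 1)) \D s = \pi \tan(\pi/\delta - \pi/2)$---a restatement of $\tilde\thet_\lambda = \pi/\delta - \pi/2$ from~\cite[Example~5.1]{bib:kmr12} applied to $\tilde\psi(\xi) = \xi^{\delta/2}$---and continuity of $\arctan$ (including at $\pm\infty$) then complete the small-$\lambda$ claim. The large-$\lambda$ statement is obtained by the parallel argument with regular variation at infinity replacing regular variation at $0$.

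The principal technical obstacle is the boundary regime $\alpha = 1$ or $\delta = 1$: in the former case the global scaling bound is too weak for $\tilde h$ to be integrable at infinity, while in the latter the target value $\pi/\delta - \pi/2 = \pi/2$ forces $\tan \thet_\lambda \to +\infty$, which is incompatible with any integrable majorant. I would handle these endpoints by replacing dominated convergence with Fatou's lemma, which, thanks to positivity of the integrand, supplies the ``$\ge$'' direction of the limit, and by invoking the a priori upper bound $\thet_\lambda \le \pi/\alpha - \pi/2$ from Lemma~\ref{lem:thetalambdaest:reg} to constrain the limsup. Everywhere else in the parameter range the dominated-convergence argument proceeds without complication.
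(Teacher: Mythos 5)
Your argument is the paper's argument: the substitution $\xi = \lambda s$ in \eqref{eq:thetalambda}, pointwise convergence of the integrand via Karamata's theorem (giving $\lambda^2\psi'(\lambda^2)/\psi(\lambda^2)\to\delta/2$), domination by comparison with $\tilde\psi(\xi) = \xi^{\alpha/2}$ through Lemma~\ref{l:lambdalambda}, and dominated convergence combined with the closed-form stable-case integral. You are actually more careful than the paper at the endpoint $\alpha = 1$: the paper simply asserts that the majorant $\alpha/(s^\alpha - 1) - 2/(s^2 - 1)$ is integrable on $(0,\infty)$, but for $\alpha = 1$ this function simplifies to $1/(s+1)$, which is not integrable. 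So your caveat identifies a genuine (if harmless) slip in the published proof.

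However, the patch you propose does not fully repair that endpoint. Fatou gives $\liminf_{\lambda\to 0^+}\thet_\lambda \ge \tfrac{\pi}{\delta} - \tfrac{\pi}{2}$, and the a priori bound from Lemma~\ref{lem:thetalambdaest:reg} caps $\limsup_{\lambda\to 0^+}\thet_\lambda \le \tfrac{\pi}{\alpha} - \tfrac{\pi}{2}$; these two close the gap only when $\tfrac{\pi}{\alpha} - \tfrac{\pi}{2} = \tfrac{\pi}{\delta} - \tfrac{\pi}{2}$, i.e.\ when $\delta = \alpha$. For $\alpha = 1$ together with $\delta > 1$ (which is admissible, since the global scaling bound need not be saturated near $0$), the cap $\tfrac{\pi}{\alpha} - \tfrac{\pi}{2} = \tfrac{\pi}{2}$ is strictly weaker than the claimed limit, and the argument is incomplete. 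To close this sub-case one would need a genuinely different bound, for instance a local (small-$\lambda$) majorant obtained from the regular-variation hypothesis itself rather than from the global scaling condition. In practice the point is moot for this paper, since the lemma is only ever applied under the main theorems' standing hypothesis $\alpha\in(1,2]$, where both your argument and the paper's are complete.
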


\begin{proof}
Suppose that $\psi'$ is regularly varying at zero with index $\tfrac{\delta}{2} - 1$ and let $\tilde{\psi}(\xi) = \xi^{\alpha/2}$, so that $-\xi \tilde{\psi}''(\xi) / \tilde{\psi}'(\xi) = 1 - \tfrac{\alpha}{2}$. By Karamata's theorem ~\cite[Theorem~1.5.11]{bib:bgt87}, $\lim_{\lambda \to 0^+} (\lambda^2 \psi'(\lambda^2) / \psi(\lambda^2)) = \tfrac{\delta}{2}$ and $\psi$ is regularly varying at zero with index $\tfrac{\delta}{2}$.

By a substitution $\xi = \lambda s$,
\formula{
 \lim_{\lambda \to 0^+} \thet_\lambda & = \arctan \expr{\frac{1}{\pi} \lim_{\lambda \to 0^+} \int_0^\infty \revisionone{\frac{2}{(\psi_\lambda)_\lambda(\lambda^2 s^2)}} \, \D s} \\
 & = \arctan \expr{\frac{1}{\pi} \lim_{\lambda \to 0^+} \int_0^\infty \expr{\frac{2 \lambda^2 \psi'(\lambda^2) / \psi(\lambda^2)}{\psi(\lambda^2 s^2) / \psi(\lambda^2) - 1} - \frac{2}{s^2 - 1}} \D s} .
}
\revisiontwo{As $\lambda \to 0^+$, the} 
integrand converges pointwise to $\delta / (\revisionone{s^\delta - 1}) - 2 / (\revisionone{s^2 - 1})$\revisionone{. Furthermore, it is positive and bounded above by $2 / (\tilde{\psi}_\lambda)_\lambda(\lambda^2 s^2) = \alpha / (s^\alpha - 1) - 2 / (s^2 - 1)$ by Lemma~\ref{l:lambdalambda}. Note that this upper bound does not depend on $\lambda > 0$ and it is integrable in $s \in (0, \infty)$.} Hence, by \revisiontwo{the Dominated Convergence Theorem} 
and~\cite[Example~5.1]{bib:k12},
\formula{
 \lim_{\lambda \to 0^+} \thet_\lambda & = \arctan \expr{\frac{1}{\pi} \int_0^\infty \expr{\frac{\delta}{s^\delta - 1} - \frac{2}{s^2 - 1}} \D s} = \frac{\pi}{\delta} - \frac{\pi}{2} \, .
}
The other statement is proved in an analogous way.
\end{proof}

%
%

\section{Estimates of $F_\lambda(x)$}
\label{sec:flambda}

Recall that
\formula{
 \fourier G_\lambda(\xi) & = \frac{2 \cos \thet_\lambda}{\lambda} \, \frac{1}{(\psi_\lambda)_\lambda(\xi^2)} && \text{with} & \frac{1}{(\psi_\lambda)_\lambda(\xi^2)} & = \frac{\lambda^2 \psi'(\lambda^2)}{\psi(\xi^2) - \psi(\lambda^2)} - \frac{\lambda^2}{\xi^2 - \lambda^2}
}
for $\lambda > 0$, $\xi \in \R$, and
\formula{
 F_\lambda(x) & = \sin(\lambda \tabs{x} + \thet_\lambda) - G_\lambda(x)
}
for $\lambda > 0$, $x \in \R$.

\begin{lemma}
\label{lem:glambdaest}
If $\psi(\xi^2)$ is the L\'evy--Khintchine exponent of a symmetric L\'evy process, $1 / (1 + \psi(\xi^2))$ is integrable, $\lambda > 0$ and $(\psi_\lambda)_\lambda(\xi)$ is well-defined and increasing in $\xi > 0$, then
\formula{
 \frac{1}{4 \pi} \int_0^\infty \min(\xi^2 x^2, 4) \fourier G_\lambda(\xi) \D \xi & \le G_\lambda(0) - G_\lambda(x) \le \frac{1}{2 \pi} \int_0^\infty \min(\xi^2 x^2, 4) \fourier G_\lambda(\xi) \D \xi
}
for all $x \in \R$. Furthermore,
\formula{
 \tabs{G_\lambda(x_1) - G_\lambda(x_2)} & \le \frac{1}{2 \pi} \int_0^\infty \min(\xi \tabs{x_1 - x_2}, 2) \min(\xi \tabs{x_1 + x_2}, 2) \fourier G_\lambda(\xi) \D \xi
}
for all $x_1, x_2 \in \R$.
\end{lemma}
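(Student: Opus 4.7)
The plan is to reduce both inequalities to Lemma~\ref{l:gest} applied to the function $g := \tfrac{1}{2\pi} \fourier G_\lambda$, so that $G_\lambda = \fourier g$ by Fourier inversion. The bulk of the work is checking that $g$ satisfies the hypotheses of Lemma~\ref{l:gest}; once this is done, the two bounds are immediate from~\eqref{eq:gest} and~\eqref{eq:gmodulus}.

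First I would record the four properties of $g$ needed to invoke Lemma~\ref{l:gest}. Evenness is clear, since $\fourier G_\lambda$ depends on $\xi$ only through $\xi^2$. Non-negativity follows from $\cos\thet_\lambda > 0$ (the expression~\eqref{eq:thetalambda} shows $\thet_\lambda \in (-\tfrac{\pi}{2}, \tfrac{\pi}{2})$) together with positivity of $(\psi_\lambda)_\lambda(\xi^2)$, which is implicit in the assumption that $(\psi_\lambda)_\lambda$ is well-defined and increasing (cf.\ the discussion in Section~\ref{sec:pre}). Monotonicity of $g$ on $\hl$ is a direct consequence of the hypothesis that $(\psi_\lambda)_\lambda(\xi)$ is increasing in $\xi > 0$: then $\xi \mapsto (\psi_\lambda)_\lambda(\xi^2)$ is increasing on $\hl$, and its reciprocal is decreasing.

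The only slightly delicate point is integrability of $\fourier G_\lambda$. Using~\eqref{eq:psilambdalambda}, near infinity $\fourier G_\lambda(\xi)$ is dominated by a constant multiple of $1/\psi(\xi^2)$, which is integrable at infinity by the standing assumption that $1/(1 + \psi(\xi^2))$ is integrable. Near $\xi = \pm \lambda$, the two summands in~\eqref{eq:psilambdalambda} each have a simple pole, but since $\psi$ is differentiable at $\lambda^2$ with $\psi'(\lambda^2) > 0$, the poles cancel and $1/(\psi_\lambda)_\lambda(\xi^2)$ extends continuously across $\xi = \pm\lambda$. Elsewhere, local boundedness is immediate. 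This is the main obstacle, but it is really just bookkeeping; the cancellation of poles was already built into the definition of $(\psi_\lambda)_\lambda$.

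With $g$ verified to be a positive, even, decreasing, integrable function, Fourier inversion yields $G_\lambda(x) = \fourier g(x)$ (justified by integrability of $\fourier G_\lambda$ together with $G_\lambda \in L^2 \cap C_0$ from Theorem~\ref{th:taux}). Applying~\eqref{eq:gest} to $g$ and multiplying through by $2\pi$ inside the integrals gives precisely the first displayed estimate, while~\eqref{eq:gmodulus} applied to $g$ yields the second. No further work is needed.
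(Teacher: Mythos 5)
Your proposal is correct and follows essentially the same route as the paper: reduce to Lemma~\ref{l:gest} by setting $g$ proportional to $\fourier G_\lambda$, verify that $g$ is even, positive, decreasing and integrable, and read off both bounds (the paper takes $g = \fourier G_\lambda$ with $\fourier g = 2\pi G_\lambda$, while you take $g = \tfrac{1}{2\pi}\fourier G_\lambda$ with $\fourier g = G_\lambda$, which is the same computation up to normalization). The paper compresses the hypothesis-checking into one sentence, whereas you spell it out, but the content is identical.
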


\begin{proof}
Due to symmetry of $G_\lambda$, $\fourier(\fourier G_\lambda) = 2 \pi G_\lambda$. Furthermore, $\fourier G_\lambda$ is differentiable and decreasing on $\hl$. Hence, the result follows by \revisionone{Lemma}~\ref{l:gest}.
\end{proof}

\begin{lemma}
\label{lem:glambdaest:reg}
If $\psi(\xi^2)$ is the L\'evy--Khintchine exponent of a symmetric L\'evy process, $1 / (1 + \psi(\xi^2))$ is integrable, $\lambda > 0$, $(\psi_\lambda)_\lambda(\xi)$ is well-defined and $(\psi_\lambda)_\lambda(\xi)$ and $\xi / (\psi_\lambda)_\lambda(\xi)$ are increasing in $\xi > 0$, then
\formula{
 \frac{1}{\pi} \int_{\frac{2}{\tabs{x}}}^\infty \fourier G_\lambda(\xi) \D \xi & \le G_\lambda(0) - G_\lambda(x) \le \frac{4}{\pi} \int_{\frac{2}{\tabs{x}}}^\infty \fourier G_\lambda(\xi) \D \xi
}
for all $x \in \R$.
\end{lemma}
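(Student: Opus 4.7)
The plan is to deduce this estimate from the preceding Lemma~\ref{lem:glambdaest} by splitting the integral $\int_0^\infty \min(\xi^2 x^2, 4) \fourier G_\lambda(\xi) \D\xi$ at the threshold $\xi = 2/\tabs{x}$ and controlling the small-$\xi$ piece by the tail, using Lemma~\ref{l:intest} applied to the function $(\psi_\lambda)_\lambda$.

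First I would observe that on the range $\xi \ge 2/\tabs{x}$ the cut-off equals $4$, so $\int_{2/\tabs{x}}^\infty \min(\xi^2 x^2, 4) \fourier G_\lambda(\xi) \D\xi = 4 \int_{2/\tabs{x}}^\infty \fourier G_\lambda(\xi) \D\xi$. Since $\min(\xi^2 x^2, 4) \ge 0$ and $\min(\xi^2 x^2, 4) \ge 4$ for $\xi \ge 2/\tabs{x}$, the lower bound of Lemma~\ref{lem:glambdaest} immediately yields
\formula{
 G_\lambda(0) - G_\lambda(x) & \ge \frac{1}{4\pi} \int_{\frac{2}{\tabs{x}}}^\infty 4 \fourier G_\lambda(\xi) \D\xi = \frac{1}{\pi} \int_{\frac{2}{\tabs{x}}}^\infty \fourier G_\lambda(\xi) \D\xi,
}
which is the lower estimate claimed.

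For the upper estimate, the key is the small-$\xi$ piece $\int_0^{2/\tabs{x}} \xi^2 x^2 \fourier G_\lambda(\xi) \D\xi$. Using $\fourier G_\lambda(\xi) = (2\cos\thet_\lambda/\lambda)/(\psi_\lambda)_\lambda(\xi^2)$ and the hypothesis that $\xi/(\psi_\lambda)_\lambda(\xi)$ is increasing, I would invoke Lemma~\ref{l:intest} with $\psi$ replaced by $(\psi_\lambda)_\lambda$ and the splitting point $\xi = 2/\tabs{x}$, obtaining
\formula{
 \int_0^{\frac{2}{\tabs{x}}} \frac{\zeta^2}{(\psi_\lambda)_\lambda(\zeta^2)} \D\zeta & \le \frac{4}{x^2} \int_{\frac{2}{\tabs{x}}}^\infty \frac{1}{(\psi_\lambda)_\lambda(\zeta^2)} \D\zeta.
}
Multiplying by $x^2 \cdot 2\cos\thet_\lambda/\lambda$ and noting $\tabs{\cos\thet_\lambda} \le 1$ gives $\int_0^{2/\tabs{x}} \xi^2 x^2 \fourier G_\lambda(\xi) \D\xi \le 4 \int_{2/\tabs{x}}^\infty \fourier G_\lambda(\xi) \D\xi$.

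Adding the two pieces shows that $\int_0^\infty \min(\xi^2 x^2, 4) \fourier G_\lambda(\xi) \D\xi \le 8 \int_{2/\tabs{x}}^\infty \fourier G_\lambda(\xi) \D\xi$, and inserting this into the upper half of Lemma~\ref{lem:glambdaest} yields $G_\lambda(0) - G_\lambda(x) \le (4/\pi) \int_{2/\tabs{x}}^\infty \fourier G_\lambda(\xi) \D\xi$. There is no real obstacle here; the only subtle point is verifying that the hypotheses of Lemma~\ref{l:intest} are indeed met by the function $(\psi_\lambda)_\lambda$, which is precisely why the monotonicity of $\xi/(\psi_\lambda)_\lambda(\xi)$ is imposed in the statement.
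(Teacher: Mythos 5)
Your proof is correct and follows exactly the paper's route: invoke Lemma~\ref{l:intest} with $(\psi_\lambda)_\lambda$ in place of $\psi$ to bound the low-frequency piece $\int_0^{2/\tabs{x}} \xi^2 x^2 \fourier G_\lambda(\xi)\, \D\xi$ by $4\int_{2/\tabs{x}}^\infty \fourier G_\lambda(\xi)\, \D\xi$, then plug into Lemma~\ref{lem:glambdaest}. (The remark that $\tabs{\cos\thet_\lambda}\le 1$ is unnecessary, since $\cos\thet_\lambda$ is a common factor of $\fourier G_\lambda$ on both sides and simply cancels, but this does not affect the argument.)
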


\begin{proof}
Since $\xi / (\psi_\lambda)_\lambda(\xi)$ is increasing in $\xi > 0$, by \revisionone{Lemma}~\ref{l:intest},
\formula{
 \int_0^{\frac{2}{\tabs{x}}} \xi^2 x^2 \fourier G_\lambda(\xi) \D \xi & \le \int_{\frac{2}{\tabs{x}}}^\infty 4 \fourier G_\lambda(\xi) \D \xi .
}
The result follows now from Lemma~\ref{lem:glambdaest}.
\end{proof}

\begin{lemma}
\label{lem:flambdaest}
If $\psi(\xi^2)$ is the L\'evy--Khintchine exponent of a symmetric L\'evy process, $1 / (1 + \psi(\xi^2))$ is integrable, $\lambda > 0$, $(\psi_\lambda)_\lambda(\xi)$ is well-defined and $(\psi_\lambda)_\lambda(\xi)$ and $\xi / (\psi_\lambda)_\lambda(\xi)$ are increasing in $\xi > 0$, then
\formula{
 \frac{\cos \thet_\lambda}{\pi} \int_{\frac{2}{x}}^\infty \frac{2 \lambda \psi'(\lambda^2)}{\psi(\xi^2) - \psi(\lambda^2)} \, \D \xi & \le F_\lambda(x) \le \frac{4}{\pi} \int_{\frac{2}{x}}^\infty \frac{2 \lambda \psi'(\lambda^2)}{\psi(\xi^2) - \psi(\lambda^2)} \, \D \xi
}
for $\lambda, x > 0$ satisfying $\lambda x < \tfrac{\pi}{2} - \thet_\lambda$. The upper bound holds when $\lambda x < 2$.
\end{lemma}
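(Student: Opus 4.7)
The plan is to start from the decomposition
\formula{
 F_\lambda(x) & = \bigl(\sin(\lambda x + \thet_\lambda) - \sin \thet_\lambda\bigr) + \bigl(G_\lambda(0) - G_\lambda(x)\bigr) ,
}
which uses the identity $G_\lambda(0) = \sin \thet_\lambda$. This identity comes from Fourier inversion:
\formula{
 G_\lambda(0) & = \tfrac{1}{\pi} \int_0^\infty \fourier G_\lambda(\xi) \, \D \xi = \cos \thet_\lambda \cdot \tfrac{1}{\pi} \int_0^\infty \tfrac{2 \, \D \xi}{\lambda (\psi_\lambda)_\lambda(\xi^2)} = \cos \thet_\lambda \tan \thet_\lambda ,
}
the last equality being the definition~\eqref{eq:thetalambda} of $\thet_\lambda$. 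Writing $I(x)$ for the integral appearing in the statement of the lemma and using the elementary antiderivative $\int \tfrac{2 \lambda}{\xi^2 - \lambda^2} \D \xi = \log \tfrac{\xi - \lambda}{\xi + \lambda}$, one obtains
\formula{
 \int_{2/x}^\infty \fourier G_\lambda(\xi) \, \D \xi & = \cos \thet_\lambda \biggl( I(x) - \log \frac{2 + \lambda x}{2 - \lambda x} \biggr) .
}
Since $\fourier G_\lambda \ge 0$, the integrand $\tfrac{2 \lambda \psi'(\lambda^2)}{\psi(\xi^2) - \psi(\lambda^2)} - \tfrac{2 \lambda}{\xi^2 - \lambda^2}$ is non-negative on $(\lambda, \infty)$, and in particular $I(x) \ge \log \tfrac{2 + \lambda x}{2 - \lambda x}$ whenever $\lambda x < 2$.

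For the lower bound, the hypothesis $\lambda x < \tfrac{\pi}{2} - \thet_\lambda$ forces $\sin(\lambda x + \thet_\lambda) - \sin \thet_\lambda > 0$. Combined with the lower half of Lemma~\ref{lem:glambdaest:reg}, this yields
\formula{
 F_\lambda(x) & \ge \bigl(\sin(\lambda x + \thet_\lambda) - \sin \thet_\lambda\bigr) + \tfrac{\cos \thet_\lambda}{\pi} \Bigl( I(x) - \log \tfrac{2 + \lambda x}{2 - \lambda x} \Bigr) ,
}
so the desired bound $\tfrac{\cos \thet_\lambda}{\pi} I(x)$ reduces to the trigonometric inequality
\formula{
 \sin(u + \thet) - \sin \thet & \ge \tfrac{\cos \thet}{\pi} \log \tfrac{2 + u}{2 - u}
}
for $\thet \in [0, \tfrac{\pi}{2})$ and $u \in (0, \tfrac{\pi}{2} - \thet)$, applied with $u = \lambda x$ and $\thet = \thet_\lambda$. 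Letting $h(u)$ denote the difference of the two sides, $h(0) = 0$ and $h''(u) = -\sin(u + \thet) - \tfrac{8 u \cos \thet}{\pi (4 - u^2)^2} < 0$; concavity plus $h(0) = 0$ reduces the inequality to the endpoint check $h(\tfrac{\pi}{2} - \thet) \ge 0$. The substitution $\thet = \tfrac{\pi}{2} - 2 v$ with $v \in (0, \tfrac{\pi}{4}]$ converts this endpoint check into $\tan v \ge \tfrac{1}{\pi} \log \tfrac{1 + v}{1 - v}$ on $[0, \tfrac{\pi}{4}]$, which follows because $p(v) = \pi (1 - v^2) - 1 - \cos(2 v)$ satisfies $p'(v) = -2 \pi v + 2 \sin(2 v) < 0$ (from $\sin(2 v) \le 2 v < \pi v$) and $p(\tfrac{\pi}{4}) > 0$ (equivalent to $\pi^3 < 16 (\pi - 1)$). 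This boundary trigonometric step is the main technical obstacle.

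For the upper bound, Lemma~\ref{lem:glambdaest:reg} together with the crude estimate $\sin(\lambda x + \thet_\lambda) - \sin \thet_\lambda \le \lambda x$ yields
\formula{
 F_\lambda(x) & \le \lambda x + \tfrac{4 \cos \thet_\lambda}{\pi} I(x) - \tfrac{4 \cos \thet_\lambda}{\pi} \log \tfrac{2 + \lambda x}{2 - \lambda x} .
}
The elementary bound $u \le \tfrac{4}{\pi} \log \tfrac{2 + u}{2 - u}$ on $(0, 2)$ (whose derivative $\tfrac{16 - 4 \pi + \pi u^2}{\pi (4 - u^2)}$ is manifestly positive) converts $\lambda x$ into $\tfrac{4}{\pi} \log \tfrac{2 + \lambda x}{2 - \lambda x}$, after which a final use of $\log \tfrac{2 + \lambda x}{2 - \lambda x} \le I(x)$ consolidates everything into $\tfrac{4}{\pi} I(x)$.
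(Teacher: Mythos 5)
Your proof is correct and takes essentially the same route as the paper: the same split $F_\lambda(x) = (\sin(\lambda x + \thet_\lambda) - \sin\thet_\lambda) + (G_\lambda(0) - G_\lambda(x))$, the same appeal to Lemma~\ref{lem:glambdaest:reg}, and the same identification of $\int_{2/x}^\infty \tfrac{2\lambda}{\xi^2 - \lambda^2}\,\D\xi = \log\tfrac{2+\lambda x}{2-\lambda x}$ as the compensator to be absorbed into the sine term. The only substantive difference is how the trigonometric estimate for the lower bound is obtained: the paper bounds $\sin(u+\thet)-\sin\thet$ from below and $\log\tfrac{2+u}{2-u}$ from above by linear functions of $u$ (via concavity of $\sin$ and convexity of $\log\tfrac{1+s}{1-s}$) and compares constants, whereas you prove the resulting inequality $\sin(u+\thet)-\sin\thet \ge \tfrac{\cos\thet}{\pi}\log\tfrac{2+u}{2-u}$ directly by showing the difference is concave in $u$ and verifying the endpoint; both lead to the same key inequality and the calculations check out. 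A small remark: in your upper bound the sharper estimate $u \le \tfrac{4}{\pi}\log\tfrac{2+u}{2-u}$ is not needed — the weaker $u \le \log\tfrac{2+u}{2-u}$ used in the paper already suffices once one notes $1 \le \tfrac{4}{\pi}$, but your version works just as well.
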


\begin{proof}
Suppose that $\lambda, x > 0$ and write
\formula[eq:flambda:split]{
 F_\lambda(x) & = (\sin(\lambda x + \thet_\lambda) - \sin(\thet_\lambda)) + (G_\lambda(0) - G_\lambda(x)) .
}
By Lemma~\ref{lem:glambdaest:reg}, $G_\lambda(0) - G_\lambda(x)$ is bounded below and above by a constant times (see~\eqref{eq:psilambdalambda})
\formula{
 \int_{\frac{2}{x}}^\infty \fourier G_\lambda(\xi) \D \xi & = \cos \thet_\lambda \int_{\frac{2}{x}}^\infty \frac{2}{\lambda (\psi_\lambda)_\lambda(\xi^2)} \, \D \xi \\
 & = \cos \thet_\lambda \int_{\frac{2}{x}}^\infty \expr{\frac{2 \lambda \psi'(\lambda^2)}{\psi(\xi^2) - \psi(\lambda^2)} - \frac{2 \lambda}{\xi^2 - \lambda^2}} \D \xi
}
Observe that $\tfrac{\D}{\D s} (\log(1 + s) - \log(1 - s)) \ge \revisionone{2}$ for $s \in (0, 1)$. Therefore, if $\lambda x < 2$, then
\formula{
 \int_{\frac{2}{x}}^\infty \frac{2 \lambda}{\xi^2 - \lambda^2} \D \xi & = \log(1 + \tfrac{\lambda x}{2}) - \log(1 - \tfrac{\lambda x}{2}) \ge \lambda x \ge \sin(\lambda x + \thet_\lambda) - \sin(\thet_\lambda) .
}
Hence,
\formula{
 F_\lambda(x) & \le \int_{\frac{2}{x}}^\infty \frac{2 \lambda}{\xi^2 - \lambda^2} \D \xi + \frac{4 \cos \thet_\lambda}{\pi} \int_{\frac{2}{x}}^\infty \frac{2}{\lambda (\psi_\lambda)_\lambda(\xi^2)} \, \D \xi \\
 & \le \frac{4}{\pi} \int_{\frac{2}{x}}^\infty \frac{2 \lambda}{\xi^2 - \lambda^2} \D \xi + \frac{4}{\pi} \int_{\frac{2}{x}}^\infty \frac{2}{\lambda (\psi_\lambda)_\lambda(\xi^2)} \, \D \xi = \frac{4}{\pi} \int_{\frac{2}{x}}^\infty \frac{2 \lambda \psi'(\lambda^2)}{\psi(\xi^2) - \psi(\lambda^2)} \, \D \xi .
}
The lower bound is found in a similar manner. Observe that $\log(1 + s) - \log(1 - s)$ is convex on $(0, 1)$. Hence, if $\tfrac{\lambda x}{2} < \tfrac{\pi}{4}$, then
\formula{
 \int_{\frac{2}{x}}^\infty \frac{2 \lambda}{\xi^2 - \lambda^2} \D \xi & = \log(1 + \tfrac{\lambda x}{2}) - \log(1 - \tfrac{\lambda x}{2}) \\
 & \le (\log(1 + \tfrac{\pi}{4}) - \log (1 - \tfrac{\pi}{4})) \tfrac{4}{\pi} \, \tfrac{\lambda x}{2} = \tfrac{2}{\pi} (\log \tfrac{4 + \pi}{4 - \pi}) \lambda x .
}
Furthermore, by concavity, $\sin(s + \thet_\lambda) - \sin \thet_\lambda \ge s (1 - \sin \thet_\lambda) / (\tfrac{\pi}{2} - \thet_\lambda)$ for $s \in (0, \tfrac{\pi}{2} - \thet_\lambda)$. It follows that if $\lambda x < \tfrac{\pi}{2} - \thet_\lambda$, then
\formula{
 \int_{\frac{2}{x}}^\infty \frac{2 \lambda}{\xi^2 - \lambda^2} \D \xi & \le \tfrac{2}{\pi} (\log \tfrac{4 + \pi}{4 - \pi}) \, \frac{\tfrac{\pi}{2} - \thet_\lambda}{1 - \sin \thet_\lambda} \, (\sin(\lambda x + \thet_\lambda) - \sin(\thet_\lambda)) \\
 & \le \frac{4 \log \tfrac{4 + \pi}{4 - \pi}}{\pi \cos \thet_\lambda} \, (\sin(\lambda x + \thet_\lambda) - \sin(\thet_\lambda)) ;
}
the last inequality follows from the inequality $1 - \cos s \ge \tfrac{1}{2} s \sin s$ for $s \in (0, \tfrac{\pi}{2})$ (which is easily proved by differentiation) with $s = \tfrac{\pi}{2} - \thet_\lambda$. This gives the desired lower bound,
\formula{
 F_\lambda(x) & \ge \frac{\pi \cos \thet_\lambda}{4 \log \tfrac{4 + \pi}{4 - \pi}} \int_{\frac{2}{x}}^\infty \frac{2 \lambda}{\xi^2 - \lambda^2} \D \xi + \frac{\cos \thet_\lambda}{\pi} \int_{\frac{2}{x}}^\infty \frac{2}{\lambda (\psi_\lambda)_\lambda(\xi^2)} \, \D \xi \\
 & \ge \frac{\cos \thet_\lambda}{\pi} \int_{\frac{2}{x}}^\infty \frac{2 \lambda}{\xi^2 - \lambda^2} \D \xi + \frac{\cos \thet_\lambda}{\pi} \int_{\frac{2}{x}}^\infty \frac{2}{\lambda (\psi_\lambda)_\lambda(\xi^2)} \, \D \xi \\
 & = \frac{\cos \thet_\lambda}{\pi} \int_{\frac{2}{x}}^\infty \frac{2 \lambda \psi'(\lambda^2)}{\psi(\xi^2) - \psi(\lambda^2)} \, \D \xi .
\qedhere
}
\end{proof}

\begin{lemma}
\label{lem:flambdaest:reg}
If $\psi(\xi^2)$ is the L\'evy--Khintchine exponent of a symmetric L\'evy process, $\lambda > 0$, $(\psi_\lambda)_\lambda(\xi)$ and $\xi / (\psi_\lambda)_\lambda(\xi)$ are increasing in $\xi > 0$, and \revisionone{the scaling-type condition~\eqref{eq:reg} holds} for some $\alpha, \beta \in (1, 2]$ and all $\xi > 0$, then
\formula[eq:flambdaest:reg:2]{
 \frac{\alpha - 1}{\pi} \, \frac{\lambda \psi'(\lambda^2)}{x \psi(1/x^2)} & \le F_\lambda(x) \le \frac{40}{\pi (\alpha - 1)} \, \frac{\lambda \psi'(\lambda^2)}{x \psi(1/x^2)}
}
for $\lambda, x > 0$ satisfying $\lambda x < \pi - \tfrac{\pi}{\alpha}$. The upper bound holds when $\lambda x < 2$. Furthermore,
\formula[eq:flambdamodulus:reg]{
 \tabs{F_\lambda(x_1) - F_\lambda(x_2)} & \le 3 \lambda \tabs{x_1 - x_2} + \frac{2 \lambda \psi'(\lambda^2)}{\pi} \int_{2 \lambda}^\infty \frac{\min(\xi \tabs{x_1 - x_2}, 2) \min(\xi \tabs{x_1 + x_2}, 2)}{\psi(\xi^2)} \, \D \xi
}
for $\lambda > 0$ and $x_1, x_2 \in \R$.
\end{lemma}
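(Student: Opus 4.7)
The plan is to address the two-sided estimate~\eqref{eq:flambdaest:reg:2} and the modulus-of-continuity bound~\eqref{eq:flambdamodulus:reg} separately. Both rely on Lemma~\ref{lem:flambdaest} (and, for the second, on Lemma~\ref{lem:glambdaest}) together with the power-law scalings of Lemma~\ref{l:ratioest}. Throughout I would exploit that, as recorded after~\eqref{eq:reg}, the upper inequality in~\eqref{eq:reg} always holds with $\beta = 2$ for complete Bernstein functions, so one may freely use $\psi(\xi_2)/\psi(\xi_1) \le \xi_2/\xi_1$ alongside $\psi(\xi_2)/\psi(\xi_1) \ge (\xi_2/\xi_1)^{\alpha/2}$.

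For the lower bound in~\eqref{eq:flambdaest:reg:2}, observe first that $\thet_\lambda \le \pi/\alpha - \pi/2$ (Lemma~\ref{lem:thetalambdaest:reg}) converts the hypothesis $\lambda x < \pi - \pi/\alpha$ into $\lambda x < \pi/2 - \thet_\lambda$, allowing the lower bound of Lemma~\ref{lem:flambdaest} to apply. I would then drop $\psi(\lambda^2)$ in the denominator and use $\psi(\xi^2) \le \psi(1/x^2)(\xi x)^2$ to obtain $\int_{2/x}^\infty d\xi/\psi(\xi^2) \ge 1/(2x\psi(1/x^2))$; combined with $\cos\thet_\lambda \ge \sin(\pi/\alpha) \ge 2(\alpha-1)/\alpha \ge \alpha - 1$ (Jordan's inequality), this yields the constant $(\alpha-1)/\pi$.

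The upper bound is the main obstacle, because the right-hand side of Lemma~\ref{lem:flambdaest} actually \emph{diverges} as $\lambda x \to 2^-$: the lower limit $2/x$ then approaches the pole $\xi = \lambda$ of the integrand, whereas the target $\frac{\lambda\psi'(\lambda^2)}{x\psi(1/x^2)}$ stays bounded. I would split into the regimes $\lambda x \le 1$ and $1 < \lambda x < 2$. In the first regime, $2/x \ge 2\lambda$, so the $\alpha$-scaling gives $\psi(\xi^2) \ge 2\psi(\lambda^2)$ throughout the integration range and hence $\psi(\xi^2) - \psi(\lambda^2) \ge \psi(\xi^2)/2$; combined with $\psi(\xi^2) \ge \psi(4/x^2)(\xi x/2)^\alpha$ and $\psi(4/x^2) \ge 2\psi(1/x^2)$, this yields $\int_{2/x}^\infty d\xi/(\psi(\xi^2) - \psi(\lambda^2)) \le 2/(x(\alpha-1)\psi(1/x^2))$, whence $F_\lambda(x) \le \frac{16}{\pi(\alpha-1)}\frac{\lambda\psi'(\lambda^2)}{x\psi(1/x^2)}$. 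In the second regime Lemma~\ref{lem:flambdaest} is useless, and I would instead invoke the a priori estimate $F_\lambda(x) \le 1 + G_\lambda(0) = 1 + \sin\thet_\lambda \le 2$, which follows from $\fourier G_\lambda \ge 0$ together with the identity $G_\lambda(0) = \sin\thet_\lambda$ implicit in~\eqref{eq:thetalambda}. A short computation from $\lambda^2\psi'(\lambda^2) \ge \tfrac{\alpha}{2}\psi(\lambda^2)$ (Lemma~\ref{l:ratioest}) and $\psi(\lambda^2) \ge \psi(1/x^2)(\lambda x)^\alpha$ then shows $\frac{\lambda\psi'(\lambda^2)}{x\psi(1/x^2)} \ge \tfrac{\alpha}{2}(\lambda x)^{\alpha-1} \ge \tfrac{\alpha}{2}$ for $\lambda x \ge 1$, so $\frac{40}{\pi(\alpha-1)} \cdot \frac{\lambda\psi'(\lambda^2)}{x\psi(1/x^2)} \ge \frac{20\alpha}{\pi(\alpha-1)} \ge 2$ for every $\alpha \in (1, 2]$; the constant $40$ is picked precisely to subsume both regimes.

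For the modulus estimate~\eqref{eq:flambdamodulus:reg}, I would decompose $F_\lambda(x) = \sin(\lambda|x| + \thet_\lambda) - G_\lambda(x)$; the sine difference contributes at most $\lambda|x_1 - x_2|$. For the $G_\lambda$ piece I would apply the second bound of Lemma~\ref{lem:glambdaest} and split the resulting integral at $\xi = 2\lambda$. On $(0, 2\lambda]$ the crude estimates $\min(\xi|x_1 - x_2|, 2)\min(\xi|x_1 + x_2|, 2) \le 2\xi|x_1 - x_2| \le 4\lambda|x_1 - x_2|$ and $\int_0^\infty \fourier G_\lambda(\xi)\,d\xi = \pi G_\lambda(0) = \pi\sin\thet_\lambda \le \pi$ contribute $2\lambda|x_1 - x_2|$, producing together with the sine term the advertised $3\lambda|x_1 - x_2|$ summand. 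On $[2\lambda, \infty)$ the bounds $\xi^2 - \lambda^2 \ge 3\xi^2/4$ and $\psi(\xi^2) - \psi(\lambda^2) \ge \psi(\xi^2)/2$ (both coming from $\xi \ge 2\lambda$ and $\alpha \ge 1$), plugged into $\fourier G_\lambda(\xi) = \cos\thet_\lambda\bigl(\tfrac{2\lambda\psi'(\lambda^2)}{\psi(\xi^2) - \psi(\lambda^2)} - \tfrac{2\lambda}{\xi^2 - \lambda^2}\bigr)$, give $\fourier G_\lambda(\xi) \le 4\lambda\psi'(\lambda^2)/\psi(\xi^2)$, and the remaining integral is precisely the one appearing in~\eqref{eq:flambdamodulus:reg}.
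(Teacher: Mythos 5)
Your proof is correct, and in one important respect it is more complete than the paper's. For the lower bound of~\eqref{eq:flambdaest:reg:2} (via Lemma~\ref{lem:flambdaest}, the $\beta=2$ scaling, and $\cos\thet_\lambda \ge \sin\tfrac{\pi}{\alpha} \ge \alpha-1$) and for the modulus estimate~\eqref{eq:flambdamodulus:reg} (decomposition~\eqref{eq:flambda:split}, Lemma~\ref{lem:glambdaest}, split at $\xi=2\lambda$), your route is essentially the paper's; the only cosmetic difference is that on $(0,2\lambda]$ you use $\int_0^\infty\fourier G_\lambda = \pi\sin\thet_\lambda \le \pi$ whereas the paper uses $(\psi_\lambda)_\lambda \ge 1$, yielding the coefficient $2$ rather than $4/\pi$ in front of $\lambda\tabs{x_1-x_2}$; both sit under the stated $3$.

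The genuine added value is your treatment of the upper bound on the full range $\lambda x < 2$. The paper's written proof establishes both bounds only for $\lambda x < \pi - \tfrac{\pi}{\alpha}$: the inequality $\psi(\lambda^2) \le (\tfrac{\pi}{4})^\alpha\psi(\xi^2)$ rests on $\xi > \tfrac{4}{\pi}\lambda$, which requires precisely that restriction. Yet the lemma asserts the upper bound for $\lambda x < 2$, and this wider range is actually invoked later — in the dominated-convergence step of the proof of Theorem~\ref{th:tauxasymp:reg}\ref{th:tauxasymp:reg:a}, where $F_\lambda(x)$ must be controlled for all $\lambda < 2/x$. As you observe, the upper bound of Lemma~\ref{lem:flambdaest} diverges logarithmically as $\lambda x \to 2^-$ (the lower limit $2/x$ collides with the simple pole at $\xi=\lambda$), so no simplification of that integral can possibly give the bounded right-hand side of~\eqref{eq:flambdaest:reg:2} uniformly on $\lambda x < 2$; a separate argument is required. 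Your fallback — $F_\lambda(x) \le 1 + G_\lambda(0) = 1 + \sin\thet_\lambda \le 2$ (using $\fourier G_\lambda \ge 0$ and~\eqref{eq:thetalambda}), combined with $\lambda\psi'(\lambda^2)/(x\psi(1/x^2)) \ge \tfrac{\alpha}{2}(\lambda x)^{\alpha-1} \ge \tfrac{\alpha}{2}$ for $\lambda x \ge 1$ via Lemma~\ref{l:ratioest}, so that $\tfrac{40}{\pi(\alpha-1)}\cdot\tfrac{\alpha}{2} \ge \tfrac{40}{\pi} > 2$ — is exactly the right way to close the gap, and the constant $40$ comfortably absorbs both regimes (your regime $\lambda x \le 1$ gives $16/(\pi(\alpha-1))$). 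This is a legitimate repair of an omission in the published argument, not a detour.
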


Note that \revisionone{the scaling-type condition~\eqref{eq:reg}} implies that $1 / (1 + \psi(\xi^2))$ is integrable (by \revisionone{Lemma}~\ref{l:ratioest}) and that $(\psi_\lambda)_\lambda(\xi)$ is well-defined.

\begin{proof}
By Lemma~\ref{lem:thetalambdaest:reg}, $\thet_\lambda \le \tfrac{\pi}{\alpha} - \tfrac{\pi}{2}$. Hence, by Lemma~\ref{lem:flambdaest},
\formula{
 \frac{\sin \tfrac{\pi}{\alpha}}{\pi} \int_{\frac{2}{x}}^\infty \frac{2 \lambda \psi'(\lambda^2)}{\psi(\xi^2) - \psi(\lambda^2)} \, \D \xi & \le F_\lambda(x) \le \frac{4}{\pi} \int_{\frac{2}{x}}^\infty \frac{2 \lambda \psi'(\lambda^2)}{\psi(\xi^2) - \psi(\lambda^2)} \, \D \xi
}
for $\lambda, x > 0$ such that $\lambda x < \pi - \tfrac{\pi}{\alpha}$. In this case $\xi > \tfrac{2}{x}$ implies $\xi > 2 (\pi - \tfrac{\pi}{\alpha})^{-1} \lambda > \tfrac{4}{\pi} \lambda$, and hence, by \revisionone{Lemma}~\ref{l:ratioest}, $\psi(\lambda^2) \le (\tfrac{\pi}{4})^\alpha \psi(\xi^2)$. Therefore,
\formula{
 \int_{\frac{2}{x}}^\infty \frac{2 \lambda \psi'(\lambda^2)}{\psi(\xi^2)} \, \D \xi & \le \int_{\frac{2}{x}}^\infty \frac{2 \lambda \psi'(\lambda^2)}{\psi(\xi^2) - \psi(\lambda^2)} \, \D \xi \le \frac{1}{1 - (\tfrac{\pi}{4})^\alpha} \int_{\frac{2}{x}}^\infty \frac{2 \lambda \psi'(\lambda^2)}{\psi(\xi^2)} \, \D \xi .
}
Finally, again by \revisionone{Lemma}~\ref{l:ratioest},
\formula{
 \int_{\frac{2}{x}}^\infty \frac{1}{\psi(\xi^2)} \, \D \xi & \le \frac{1}{\psi(1/x^2)} \int_{\frac{2}{x}}^\infty \frac{1}{(\xi x)^\alpha} \, \D \xi = \frac{1}{(\alpha - 1) 2^{\alpha - 1} x \psi(1/x^2)} \, ,
}
and a similar lower bound is valid with $\alpha$ replaced by $\beta$. By combining the above estimates, one obtains
\formula{
 \frac{\sin \tfrac{\pi}{\alpha}}{\pi} \, \frac{2 \lambda \psi'(\lambda^2)}{(\beta - 1) 2^{\beta - 1} x \psi(1/x^2)} & \le F_\lambda(x) \le \frac{4}{\pi} \, \frac{1}{1 - (\tfrac{\pi}{4})^{\alpha}} \, \frac{2 \lambda \psi'(\lambda^2)}{(\alpha - 1) 2^{\alpha - 1} x \psi(1/x^2)} \, ,
}
and~\eqref{eq:flambdaest:reg:2} follows by elementary estimates: $\sin \tfrac{\pi}{\alpha} \ge (\alpha - 1)$, $(\beta - 1) 2^{\beta - 1} \le 2$, $2^{\alpha - 1} \ge 1$, $1 - (\tfrac{\pi}{4})^\alpha \ge 1 - \tfrac{\pi}{4} \ge \tfrac{1}{5}$.

Formula~\eqref{eq:flambdamodulus:reg} is proved in a similar way. By Lemma~\ref{lem:glambdaest} and~\eqref{eq:flambda:split}, for $\lambda > 0$ and $x_1, x_2 \in \R$,
\formula{
 \tabs{F_\lambda(x_1) - F_\lambda(x_2)} & \le \lambda x + \frac{1}{2 \pi} \int_0^\infty \frac{2}{\lambda (\psi_\lambda)_\lambda(\xi^2)} \, \min(\xi x, 2) \min(\xi y, 2) \D \xi
}
where for brevity $x = \tabs{x_1 - x_2}$ and $y = \tabs{x_1 + x_2}$. Since $(\psi_\lambda)_\lambda(\xi) \ge (\psi_\lambda)_\lambda(0) = 1$ for $\xi \in (0, 2 \lambda)$, and $1 / (\psi_\lambda)_\lambda(\xi) \le \lambda^2 / (\psi(\xi^2) - \psi(\lambda^2))$ for $\xi > 2 \lambda$,
\formula{
 \tabs{F_\lambda(x_1) - F_\lambda(x_2)} & \le \lambda x + \int_0^{2 \lambda} \frac{2 \xi x}{\pi \lambda} \, \D \xi + \frac{1}{\pi} \int_{2 \lambda}^\infty \frac{\lambda \psi'(\lambda^2)}{\psi(\xi^2) - \psi(\lambda^2)} \, \min(\xi x, 2) \min(\xi y, 2) \D \xi \\
 & \le 3 \lambda x + \frac{\lambda \psi'(\lambda^2)}{(1 - (\tfrac{1}{2})^\alpha) \pi} \int_{2 \lambda}^\infty \frac{\min(\xi x, 2) \min(\xi y, 2)}{\psi(\xi^2)} \, \D \xi ;
}
here the last inequality follows by \revisionone{Lemma}~\ref{l:ratioest}.
\end{proof}

\begin{lemma}
\label{lem:flambdareg}
If $\psi(\xi^2)$ is the L\'evy--Khintchine exponent of a symmetric L\'evy process, $\lambda > 0$, $(\psi_\lambda)_\lambda(\xi)$ is well-defined, and $\psi$ is regularly varying at infinity with index $\tfrac{\gamma}{2}$ for some $\gamma \in (1, 2]$, then
\formula{
 \lim_{x \to 0^+} (x \psi(1 / x^2) F_\lambda(x)) & = \frac{\lambda \psi'(\lambda^2) \cos \thet_\lambda}{\Gamma(\gamma) \tabs{\cos \tfrac{\gamma \pi}{2}}} \, .
}
\end{lemma}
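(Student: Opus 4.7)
The plan is to use the decomposition
\formula{
F_\lambda(x) = \bigl[\sin(\lambda \tabs{x} + \thet_\lambda) - \sin \thet_\lambda\bigr] + \bigl[G_\lambda(0) - G_\lambda(x)\bigr],
}
which rests on the identity $G_\lambda(0) = \sin \thet_\lambda$. I would obtain the latter by Fourier inversion at zero, $G_\lambda(0) = \tfrac{1}{\pi} \int_0^\infty \fourier G_\lambda(\xi) \D \xi$, combined with the defining formula~\eqref{eq:theta-def} of $\thet_\lambda$, which rewrites the right-hand side as $\cos \thet_\lambda \tan \thet_\lambda = \sin \thet_\lambda$. This makes $F_\lambda(0) = 0$, so both brackets vanish at $x = 0$.

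For the first bracket, a Taylor expansion gives $\sin(\lambda x + \thet_\lambda) - \sin \thet_\lambda = \lambda x \cos \thet_\lambda + O(x^2)$, so multiplying by $x \psi(1/x^2)$ produces $\lambda \cos \thet_\lambda \cdot x^2 \psi(1/x^2) + O(x^3 \psi(1/x^2))$. Regular variation of $\psi$ at infinity with index $\gamma/2$ implies that $x^2 \psi(1/x^2)$ converges as $x \to 0^+$ to $c := \lim_{\xi \to \infty} \psi(\xi) / \xi$, which equals $0$ when $\gamma \in (1, 2)$ and equals the linear (Gaussian) coefficient in the complete Bernstein representation of $\psi$ when $\gamma = 2$. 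Hence the first piece contributes $\lambda c \cos \thet_\lambda$ to the limit.

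For the second bracket I would apply Lemma~\ref{l:greg} with $g = \fourier G_\lambda$, which is integrable, even, positive and decreasing on $\hl$, and satisfies $\fourier g = 2 \pi G_\lambda$ by Fourier inversion. Regular variation of $g$ at infinity with index $-\gamma$ will follow from
\formula{
\psi(\xi^2) \fourier G_\lambda(\xi) = \cos \thet_\lambda \biggl[ \frac{2\lambda \psi'(\lambda^2) \psi(\xi^2)}{\psi(\xi^2) - \psi(\lambda^2)} - \frac{2 \lambda \psi(\xi^2)}{\xi^2 - \lambda^2} \biggr] \longrightarrow 2\lambda \cos \thet_\lambda \bigl(\psi'(\lambda^2) - c\bigr)
}
as $\xi \to \infty$, where the limit is strictly positive because $\psi'(\lambda^2) > \psi'(\infty) = c$ (the purely linear case $\psi(\xi) = c \xi$ being excluded by the assumption that $(\psi_\lambda)_\lambda$ is well-defined). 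Lemma~\ref{l:greg} then yields
\formula{
x \psi(1/x^2) \bigl(G_\lambda(0) - G_\lambda(x)\bigr) \longrightarrow \frac{\lambda \cos \thet_\lambda \bigl(\psi'(\lambda^2) - c\bigr)}{\Gamma(\gamma) \tabs{\cos(\gamma \pi/2)}}.
}

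Summing the two contributions gives $\lambda c \cos \thet_\lambda + \lambda \cos \thet_\lambda \bigl(\psi'(\lambda^2) - c\bigr) / (\Gamma(\gamma) \tabs{\cos(\gamma \pi/2)})$. For $\gamma \in (1, 2)$ we have $c = 0$ and this collapses to the asserted formula. For $\gamma = 2$ the identity $\Gamma(2) \tabs{\cos \pi} = 1$ causes the two summands to telescope to $\lambda \psi'(\lambda^2) \cos \thet_\lambda$, again matching the claim. I expect the main obstacle to be this $\gamma = 2$ case, where neither piece alone captures the full contribution; the cancellation works only because the same constant $c$ enters both the asymptotic of $x^2 \psi(1/x^2)$ and the asymptotic constant of $\fourier G_\lambda$ at infinity, and identifying this coincidence cleanly is the most delicate point.
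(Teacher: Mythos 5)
Your proof is correct and follows essentially the same route as the paper: the same decomposition $F_\lambda(x) = (\sin(\lambda|x|+\thet_\lambda)-\sin\thet_\lambda) + (G_\lambda(0)-G_\lambda(x))$, the same identification of $a=\lim_{\xi\to\infty}\psi(\xi^2)/\xi^2$ as the quantity governing both the sine term and the decay constant of $\fourier G_\lambda$, and the same appeal to Lemma~\ref{l:greg}, with the $\gamma=2$ cancellation handled identically. Your explicit remarks that $G_\lambda(0)=\sin\thet_\lambda$ follows from Fourier inversion plus~\eqref{eq:theta-def}, and that $\psi'(\lambda^2)>a$ (needed for $\fourier G_\lambda$ to be genuinely regularly varying of index $-\gamma$), make small implicit steps of the paper's argument visible but do not change the approach.
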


Note that $1 / (1 + \psi(\xi^2))$ is integrable, because it is regularly varying at infinity with index $-\gamma$.

\begin{proof}
Recall that
\formula{
 \fourier G_\lambda(\xi) & = \frac{2 \cos \thet_\lambda}{\lambda} \, \frac{1}{(\psi_\lambda)_\lambda(\xi^2)} = \frac{2 \lambda \psi'(\lambda^2) \cos \thet_\lambda}{\psi(\xi^2) - \psi(\lambda^2)} - \frac{2 \lambda \cos \thet_\lambda}{\xi^2 - \lambda^2} \, ,
}
and that $G_\lambda = \tfrac{1}{2 \pi} \fourier(\fourier G_\lambda)$. Let $a = \lim_{\xi \to \infty} (\psi(\xi^2) / \xi^2)$; necessarily $a \in [0, \infty)$. Then
\formula{
 \lim_{\xi \to \infty} (\psi(\xi^2) \fourier G_\lambda(\xi)) & = 2 \lambda (\psi'(\lambda^2) - a) \cos \thet_\lambda .
}
Therefore, $\fourier G_\lambda(\xi)$ is regularly varying at infinity with index $-\gamma$, and by \revisionone{Lemma}~\ref{l:greg},
\formula{
 \lim_{x \to 0^+} (x \psi(1 / x^2) (G_\lambda(0) - G_\lambda(x)) & = 2 \lambda (\psi'(\lambda^2) - a) \cos \thet_\lambda \lim_{x \to 0^+} \frac{x (G_\lambda(0) - G_\lambda(x))}{\fourier G_\lambda(1/x)} \\
 & = \frac{\lambda (\psi'(\lambda^2) - a) \cos \thet_\lambda}{\Gamma(\gamma) \tabs{\cos \tfrac{\gamma \pi}{2}}} \, .
}
Furthermore,
\formula{
 \lim_{x \to 0^+} (x \psi(1 / x^2) (\sin(\lambda x + \thet_\lambda) - \sin \thet_\lambda)) & = \lambda a \cos \thet_\lambda ,
}
and $F_\lambda(x) = (\sin(\lambda x + \thet_\lambda) - \sin \thet_\lambda) + (G_\lambda(0) - G_\lambda(x))$. The result clearly follows when $a = 0$. If $a > 0$, then necessarily $\gamma = 2$, and hence $\Gamma(\gamma) \tabs{\cos \tfrac{\gamma \pi}{2}} = 1$.
\end{proof}

Recall that the compensated potential kernel $v$ of $\revisionone{X}$ is defined by
\formula{
 v(x) & = \int_0^\infty (p_t(0) - p_t(x)) \D t ,
}
where $p_t(x)$ is the density function of the distribution of $X_t$. Since $\fourier p_t(\xi) = e^{-t \psi(\xi^2)}$, the distributional Fourier transform of $v$ \revisionone{satisfies}
\formula{
 \scalar{\fourier v, \ph} & = \int_0^\infty \int_{-\infty}^\infty e^{-t \psi(\xi^2)}(\ph(0) - \ph(\xi)) \D \xi \D t \\
 & = \int_0^\infty \int_0^\infty e^{-t \psi(\xi^2)}(2 \ph(0) - \ph(\xi) - \ph(-\xi)) \D \xi \D t \\
 & = \int_0^\infty \frac{2 \ph(0) - \ph(\xi) - \ph(-\xi)}{\psi(\xi^2)} \, \D \xi
}
for $\ph$ in the Schwartz class (\revisiontwo{the} Fubini \revisiontwo{theorem} is used in the last equality).

\begin{lemma}
\label{lem:fzero}
If $\psi(\xi^2)$ is the L\'evy--Khintchine exponent of a symmetric L\'evy process, $\lambda > 0$, $(\psi_\lambda)_\lambda(\xi)$ and $\xi / (\psi_\lambda)_\lambda(\xi)$ are increasing in $\xi > 0$, and \revisionone{the scaling-type condition~\eqref{eq:reg} holds} for some $\alpha, \beta \in (1, 2]$ and all $\xi > 0$, then
\formula{
 \lim_{\lambda \to 0^+} \frac{F_\lambda(x)}{2 \lambda \psi'(\lambda^2) \cos \thet_\lambda} & = v(x)
}
locally uniformly in $x \in \R$, where $v(x)$ is the compensated potential kernel of $\revisionone{X}$.
\end{lemma}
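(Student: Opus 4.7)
The plan is to rewrite $F_\lambda(x)/(2\lambda\psi'(\lambda^2)\cos\thet_\lambda)$ as a sum of elementary terms plus an oscillatory integral, pass to the pointwise limit, and then upgrade to local uniform convergence by equicontinuity. First I would observe that $F_\lambda(0) = 0$: from the defining formula for $\fourier G_\lambda$ and~\eqref{eq:thetalambda}, $G_\lambda(0) = \tfrac{1}{\pi}\int_0^\infty \fourier G_\lambda(\xi)\,\D\xi = \cos\thet_\lambda \tan\thet_\lambda = \sin\thet_\lambda$. Expanding $F_\lambda(x) = F_\lambda(x) - F_\lambda(0)$ via $\sin(\lambda\tabs{x}+\thet_\lambda) - \sin\thet_\lambda = \cos\thet_\lambda\sin(\lambda\tabs{x}) - \sin\thet_\lambda(1-\cos(\lambda x))$, and combining with the symmetric Fourier representation of $G_\lambda(0) - G_\lambda(x)$ together with the identity $\fourier G_\lambda(\xi) = 2\lambda\psi'(\lambda^2)\cos\thet_\lambda \cdot H_\lambda(\xi)$, where
\[
 H_\lambda(\xi) := \frac{1}{\psi(\xi^2)-\psi(\lambda^2)} - \frac{1}{\psi'(\lambda^2)(\xi^2-\lambda^2)} = \frac{1}{\lambda^2\psi'(\lambda^2)(\psi_\lambda)_\lambda(\xi^2)}
\]
has a removable singularity at $\xi = \lambda$, one arrives at the key decomposition
\[
 \frac{F_\lambda(x)}{2\lambda\psi'(\lambda^2)\cos\thet_\lambda} = \frac{\sin(\lambda\tabs{x})}{2\lambda\psi'(\lambda^2)} - \frac{(1-\cos(\lambda x))\tan\thet_\lambda}{2\lambda\psi'(\lambda^2)} + \frac{1}{\pi}\int_0^\infty (1-\cos(\xi x))\, H_\lambda(\xi)\,\D\xi.
\]

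Let $a := \psi'(0^+) \in (0,\infty]$. As $\lambda \to 0^+$, the first term tends to $\tabs{x}/(2a)$ (read as $0$ when $a = \infty$), the second vanishes because $\tan\thet_\lambda$ is bounded by Lemma~\ref{lem:thetalambdaest:reg} and $\psi'(\lambda^2) \ge \psi'(1)$ for $\lambda \le 1$, and $H_\lambda(\xi) \to H(\xi) := 1/\psi(\xi^2) - 1/(a\xi^2)$ pointwise, with the convention $1/\infty = 0$. Using $\int_0^\infty(1-\cos(\xi x))/\xi^2\,\D\xi = \pi\tabs{x}/2$, the $\tabs{x}/(2a)$ contribution cancels exactly the $1/(a\xi^2)$ subtraction inside the integral, so that (granting the dominated convergence argued below) the candidate limit is
\[
 \frac{\tabs{x}}{2a} + \frac{1}{\pi}\int_0^\infty (1-\cos(\xi x))\, H(\xi)\,\D\xi = \frac{1}{\pi}\int_0^\infty \frac{1-\cos(\xi x)}{\psi(\xi^2)}\,\D\xi = v(x),
\]
the last identity being the Fubini representation of $v$ derived just before the statement.

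The technical crux is therefore to justify dominated convergence for the integral. Noting that $H_\lambda \ge 0$ by concavity of $\psi$, I would split $(0,\infty) = (0,\lambda/2) \cup [\lambda/2,2\lambda] \cup (2\lambda,\infty)$. On $(2\lambda,\infty)$, the scaling~\eqref{eq:reg} (with $\alpha \ge 1$) gives $\psi(\xi^2) \ge 2\psi(\lambda^2)$, hence $H_\lambda(\xi) \le 2/\psi(\xi^2)$, and $(1-\cos(\xi x))\cdot 2/\psi(\xi^2)$ is a $\lambda$-independent integrable majorant by~\eqref{eq:reg} ($\alpha > 1$ at infinity, $\beta \le 2$ at zero). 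On $(0,\lambda/2)$, the bound $H_\lambda(\xi) \le 4/(3\psi'(\lambda^2)(\lambda^2-\xi^2)) \le C/\psi(\lambda^2)$ (from Lemma~\ref{l:ratioest}) combined with $1-\cos(\xi x) \le \xi^2 x^2/2$ yields a contribution of order $x^2 \lambda^3/\psi(\lambda^2) = o(1)$; on $[\lambda/2,2\lambda]$, using $1-\cos(\xi x) = O(\lambda^2 x^2)$ together with $\int_0^\infty H_\lambda\,\D\xi = \pi\tan\thet_\lambda/(2\lambda\psi'(\lambda^2))$ (from~\eqref{eq:thetalambda}) gives a contribution of order $\lambda x^2/\psi'(\lambda^2) = o(1)$. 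Local uniform convergence then follows from equicontinuity: the bound~\eqref{eq:flambdamodulus:reg}, divided by $2\lambda\psi'(\lambda^2)\cos\thet_\lambda$, is uniformly controlled for $x_1, x_2$ in a compact set and $\lambda \in (0,1)$ (as $\psi'(\lambda^2)$ and $\cos\thet_\lambda$ are bounded below there), so Arzel\`a--Ascoli upgrades the pointwise limit to local uniform convergence. The main obstacle will be the three-region analysis of $H_\lambda$, especially near $\xi \approx \lambda$ where the two singular terms defining $H_\lambda$ must be handled jointly rather than separately.
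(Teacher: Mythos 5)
Your proof is correct, but it takes a genuinely different route from the paper's. The paper works at the level of the distributional Fourier transform $\fourier F_\lambda$ from Theorem~\ref{th:taux}: it rewrites the $\pi\tan\thet_\lambda$ atomic term as a principal-value integral with the same kernel $1/(\psi(\lambda^2)-\psi(\xi^2))$ and absorbs it into the p.v.\ integral, yielding a single absolutely convergent integrand $2(\ph(\xi)-\ph(\lambda)+\ph(-\xi)-\ph(-\lambda))/(\psi(\lambda^2)-\psi(\xi^2))$; dominated convergence is then justified by a single mean-value-theorem bound $\tabs{\psi(\lambda^2)-\psi(\xi^2)}\ge\tabs{\xi^2-\lambda^2}\psi'(2)$ on a compact $\xi$-interval and a trivial bound at infinity, producing convergence in the space of tempered distributions. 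You instead work directly with the Fourier inversion of $G_\lambda$ (legitimate since $\fourier G_\lambda$ is integrable), expand $\sin(\lambda\tabs{x}+\thet_\lambda)-\sin\thet_\lambda$, and obtain an explicit pointwise decomposition into two elementary terms plus $\tfrac{1}{\pi}\int(1-\cos(\xi x))H_\lambda\,\D\xi$; passing to the limit requires your three-region analysis of $H_\lambda$, with the somewhat delicate near-diagonal region $[\lambda/2,2\lambda]$ controlled via the identity $\int_0^\infty H_\lambda = \pi\tan\thet_\lambda/(2\lambda\psi'(\lambda^2))$. Both proofs conclude with the same equicontinuity argument from~\eqref{eq:flambdamodulus:reg}. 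What your route buys is a more elementary proof of the \emph{pointwise} limit that avoids distribution theory and makes the role of $a=\psi'(0^+)$ (finite or infinite) completely transparent; what the paper's route buys is that the dominated-convergence step is a single clean estimate rather than a case analysis, and the distributional convergence is noted to hold in full generality (without the scaling-type hypotheses) whereas your direct argument relies on the scaling bounds at several points (e.g.\ $\psi(\xi^2)\ge 2\psi(\lambda^2)$ for $\xi>2\lambda$, the polynomial lower bound $\psi(\lambda^2)\ge\psi(1)\lambda^\beta$, and boundedness of $\tan\thet_\lambda$).
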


Noteworthy, convergence in the space of tempered distributions holds in full generality, that is, with the hypotheses of Theorem~\ref{th:taux}. Under the assumptions of the lemma, one also has $\thet_\lambda \to \tfrac{\pi}{\gamma} - \tfrac{\pi}{2}$ as $\lambda \to 0^+$ by Lemma~\ref{lem:thetalambda:reg}. As before, \revisionone{the scaling-type condition~\eqref{eq:reg}} implies that $1 / (1 + \psi(\xi^2))$ is integrable (by \revisionone{Lemma}~\ref{l:ratioest}) and that $(\psi_\lambda)_\lambda(\xi)$ is well-defined.

\begin{proof}
By Theorem~\ref{th:taux}, for $\ph$ in the Schwartz class,
\formula{
 \scalar{\frac{\fourier F_\lambda}{\lambda \psi'(\lambda^2) \cos \thet_\lambda}, \ph} & = 2 \pvint_{-\infty}^\infty \frac{\ph(\xi)}{\psi(\lambda^2) - \psi(\xi^2)} \D \xi + \frac{\pi \tan \thet_\lambda}{\lambda \psi'(\lambda^2)} (\ph(\lambda) + \ph(-\lambda)) \\
 & = 2 \pvint_{-\infty}^\infty \frac{\ph(\xi)}{\psi(\lambda^2) - \psi(\xi^2)} \D \xi - 2 \pvint_0^\infty \frac{\ph(\lambda) + \ph(-\lambda)}{\psi(\lambda^2) - \psi(\xi^2)} \, \D \xi \\
 & = 2 \int_0^\infty \frac{\ph(\xi) - \ph(\lambda) + \ph(-\xi) - \ph(-\lambda)}{\psi(\lambda^2) - \psi(\xi^2)} \, \D \xi .
}
As $\lambda \to 0^+$, the integrand converges pointwise to $(2 \ph(0) - \ph(\xi) - \ph(-\xi)) / \psi(\xi^2)$. We claim that \revisiontwo{the Dominated Convergence Theorem} 
applies to the above limit. Indeed,
\formula{
 \tabs{\ph(\xi) - \ph(\lambda) + \ph(-\xi) - \ph(-\lambda)} & \le \tabs{\xi - \lambda} \sup \{ \tabs{\ph'(s) - \ph'(-s)} : 0 < s < \xi + \lambda \} \\
 & \le \tabs{\xi - \lambda} (\xi + \lambda) \tnorm{\ph''}_\infty = \tabs{\xi^2 - \lambda^2} \tnorm{\ph''}_\infty ,
}
for all $\lambda, \xi > 0$, and since $\psi'$ is decreasing,
\formula{
 \tabs{\psi(\lambda^2) - \psi(\xi^2)} & \ge \tabs{\xi^2 - \lambda^2} \psi'(2)
}
for all $\lambda \in (0, 1)$ and $\xi \in (0, 2)$. Hence,
\formula{
 \abs{\frac{\ph(\xi) - \ph(\lambda) + \ph(-\xi) - \ph(-\lambda)}{\psi(\lambda^2) - \psi(\xi^2)}} & \le \frac{\tnorm{\ph''}_\infty}{\psi'(2)}
}
for all $\lambda \in (0, 1)$ and $\xi \in (0, 2)$. On the other hand,
\formula{
 \abs{\frac{\ph(\xi) - \ph(\lambda) + \ph(-\xi) - \ph(-\lambda)}{\psi(\lambda^2) - \psi(\xi^2)}} & \le \frac{4 \tnorm{\ph}_\infty}{\psi(\xi^2) - \psi(1)}
}
for all $\lambda \in (0, 1)$ and $\xi \ge 2$. The upper bound found above is integrable in $\xi \in (0, \infty)$, and the claim is proved. It follows that
\formula{
 \lim_{\lambda \to 0^+} \scalar{\frac{\fourier F_\lambda}{\lambda \psi'(\lambda^2) \cos \thet_\lambda}, \ph} & = 2 \scalar{\fourier v, \ph}
}
for every $\ph$ in the Schwartz class. This proves the desired result, but with locally uniform convergence replaced by convergence in the space of tempered distributions.

By Lemmas~\ref{lem:flambdaest:reg} and~\ref{lem:thetalambdaest:reg}, for all $\lambda > 0$ and $x_1, x_2 \in \R$,
\formula{
 \frac{\tabs{F_\lambda(x_1) - F_\lambda(x_2)}}{\lambda \psi'(\lambda^2) \cos \thet_\lambda} & \le \frac{3 \tabs{x_1 - x_2}}{\psi'(\lambda) \sin \tfrac{\pi}{\alpha}} + \frac{2}{\pi \sin \tfrac{\pi}{\alpha}} \int_{2 \lambda}^\infty \frac{\min(\xi \tabs{x_1 - x_2}, 2) \min(\xi \tabs{x_1 + x_2}, 2)}{\psi(\xi^2)} \, \D \xi .
}
Hence, if $\lambda \in (0, \lambda_0)$ and $x_1, x_2 \in [-x_0, x_0]$, then
\formula{
 \frac{\tabs{F_\lambda(x_1) - F_\lambda(x_2)}}{\lambda \psi'(\lambda^2) \cos \thet_\lambda} & \le \frac{3 \tabs{x_1 - x_2}}{\psi'(\lambda_0) \sin \tfrac{\pi}{\alpha}} + \frac{2}{\pi \sin \tfrac{\pi}{\alpha}} \int_0^\infty \frac{\min(\xi \tabs{x_1 - x_2}, 2) \min(2 \xi x_0, 2)}{\psi(\xi^2)} \, \D \xi .
}
The right-hand side is finite and converges to $0$ as $\tabs{x_2 - x_1} \to 0^+$ by \revisiontwo{the Dominated Convergence Theorem}
. Hence, the functions $F_\lambda(x) / (\lambda \psi'(\lambda^2) \cos \thet_\lambda)$ are equicontinuous in $x \in [-x_0, x_0]$ for $\lambda \in (0, \lambda_0)$. It remains to note that on a bounded interval, distributional convergence and equicontinuity imply uniform convergence.
\end{proof}

%
%

\section{Estimates of hitting times}
\label{sec:taux}

We begin with two technical results.

\begin{proposition}
\label{prop:tauxcont}
If $\revisionone{X}$ is a symmetric L\'evy process with L\'evy--Khintchine exponent $\Psi$, and $1 / (1 + \Psi(\xi))$ is integrable, then $\pr(t < \tau_x < \infty)$ is jointly continuous in $t > 0$ and $x \in \R$.
\end{proposition}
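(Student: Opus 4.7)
The plan is to combine continuity of the Laplace transform $\ex e^{-\lambda \tau_x}$ in $x$ with the absence of atoms of the law of $\tau_x$ on $(0, \infty)$. From~\eqref{eq:general}--\eqref{eq:potential}, $\ex e^{-\lambda \tau_x} = u_\lambda(x)/u_\lambda(0)$ with $u_\lambda = \tfrac{1}{2\pi} \fourier^{-1}(1/(\lambda + \Psi))$. Since $\Psi \ge 0$ and $1/(1+\Psi) \in L^1(\R)$, the elementary bound $1/(\lambda + \Psi) \le \max(1, 1/\lambda)/(1 + \Psi)$ puts $1/(\lambda + \Psi)$ in $L^1(\R)$ for every $\lambda > 0$, so $u_\lambda$ is bounded continuous and hence $x \mapsto \ex e^{-\lambda \tau_x}$ is continuous on $\R$. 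Similarly, the pointwise estimate $e^{-t\Psi(\xi)} \le C(t)/(1+\Psi(\xi))$ (with $C(t) = \sup_{u \ge 0} (1+u) e^{-tu} < \infty$) shows that $X_t$ has a bounded continuous density $p_t$ for every $t > 0$, so $\pr(X_t = y) = 0$ for all $t > 0$, $y \in \R$. Because $X$ is c\`adl\`ag, $\tau_x < \infty$ forces $X_{\tau_x} = x$, so $\pr(\tau_x = t) \le \pr(X_t = x) = 0$ for $t > 0$. Thus the law of $\tau_x$ has no atom on $(0, \infty)$, and $t \mapsto \pr(t < \tau_x < \infty)$ is continuous on $(0, \infty)$ for every $x \in \R$.

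Next I fix $x_0 \in \R$, $t_0 > 0$, and take $(x_n, t_n) \to (x_0, t_0)$. Viewing the laws $\pi_n$ of $\tau_{x_n}$ as probability measures on the compact space $[0, \infty]$ (with $e^{-\lambda \infty} = 0$), pointwise convergence of the Laplace transforms and the continuity theorem (which applies on a compact state space) give weak convergence $\pi_n \Rightarrow \pi_{x_0}$. Since $\pi_{x_0}$ has no atom at $t_0$, this implies $\pr(\tau_{x_n} > t_0) \to \pr(\tau_{x_0} > t_0)$. To pass from $\pr(\tau_x > t)$ to $\pr(t < \tau_x < \infty) = \pr(\tau_x > t) - \pr(\tau_x = \infty)$, I also need continuity of $x \mapsto \pr(\tau_x < \infty)$. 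If $X$ is recurrent, $\pr(\tau_x < \infty) \equiv 1$; otherwise the Chung--Fuchs criterion combined with the integrability of $1/(1+\Psi)$ at infinity yields $1/\Psi \in L^1(\R)$, so the potential kernel $u(x) = \fourier^{-1}(1/\Psi)(x)$ is bounded continuous, and letting $\lambda \to 0^+$ in the identity $\ex e^{-\lambda \tau_x} = u_\lambda(x)/u_\lambda(0)$ gives $\pr(\tau_x < \infty) = u(x)/u(0)$.

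Combining these observations, $g_{x_n}(t) := \pr(t < \tau_{x_n} < \infty) \to g_{x_0}(t)$ pointwise for every $t > 0$. The functions $g_{x_n}$ are monotone decreasing in $t$ and the pointwise limit $g_{x_0}$ is continuous on $(0, \infty)$, so by the standard fact that monotone functions converging pointwise to a continuous limit converge uniformly on compact intervals, the convergence is uniform on $[t_0/2, 2 t_0]$. Combined with continuity of $g_{x_0}$ in $t$, this yields $g_{x_n}(t_n) \to g_{x_0}(t_0)$, i.e., joint continuity at $(x_0, t_0)$. The main subtlety of the plan is the handling of $\pr(\tau_x = \infty)$: weak convergence of $\pi_n$ on $[0, \infty]$ gives only upper semicontinuity of $x \mapsto \pr(\tau_x = \infty)$ (since $\{\infty\}$ is closed in $[0, \infty]$), which is why the separate potential-theoretic argument splitting into the recurrent and transient cases is essential.
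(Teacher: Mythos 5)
Your proof is correct and rests on the same three pillars as the paper's argument: continuity of $x \mapsto \ex e^{-\lambda \tau_x}$, absence of atoms of the law of $\tau_x$ on $(0,\infty)$ (from $\pr(\tau_x = t) \le \pr(X_t = x) = 0$), and the Dini--P\'olya device that monotone pointwise convergence to a continuous limit is locally uniform. Two differences are worth noting. A minor one: you derive the Laplace-transform continuity directly from Fourier inversion of $1/(\lambda + \Psi)$ rather than quoting Sato's Theorem~43.5, which makes the argument self-contained. A more substantive one: you explicitly handle the mass at $\infty$. The paper passes from vague convergence of the laws of $\tau_x$ to continuity in $x$ of $\pr(t < \tau_x < \infty)$ in one step, but vague (or weak-on-$[0,\infty]$) convergence only controls $\pr(\tau_{x_n} \le t)$ and $\pr(\tau_{x_n} > t)$ at non-atoms of the limit; to pass to $\pr(t < \tau_x < \infty)$ one must subtract $\pr(\tau_x = \infty)$, whose continuity in $x$ weak convergence gives only as upper semicontinuity, since $\{\infty\}$ is closed in $[0,\infty]$. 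You correctly identify this issue and close it by a case split: $\pr(\tau_x < \infty) \equiv 1$ in the recurrent case (Sato's Remark~43.12), and $\pr(\tau_x < \infty) = u(x)/u(0)$ with $u$ bounded continuous in the transient case, where Chung--Fuchs together with the tail integrability of $1/(1 + \Psi)$ gives $1/\Psi \in L^1(\R)$. Your version is the more careful of the two.
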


\begin{proof}
By~\cite[Theorem~43.5 and Remark~43.6]{bib:s99}, $\ex e^{-\lambda \tau_x}$ is a continuous function of $x \in \R$ for every $\lambda > 0$. Therefore, the distributions of $\tau_x$ are continuous in $x$ with respect to vague convergence of measures. It follows that the function $\pr(t < \tau_x < \infty)$ is continuous in $x$ at every point $(t, x)$ at which it is continuous in $t$.

Since $\pr(\tau_x = t) \le \pr(X_t = x) = 0$, the function $\pr(t < \tau_x < \infty)$ is continuous and non-increasing in $t > 0$ for every $x \in \R$. This implies that it is in fact jointly continuous in $t > 0$ and $x \in \R$.
\end{proof}

\begin{proposition}
\label{prop:everywhere}
If $\psi(\xi^2)$ is the L\'evy--Khintchine exponent of a symmetric L\'evy process, $1 / (1 + \psi(\xi^2))$ is integrable, $(\psi_\lambda)_\lambda$ is well-defined and $(\psi_\lambda)_\lambda(\xi)$ and $\xi / (\psi_\lambda)_\lambda(\xi)$ are increasing in $\xi > 0$ for all $\lambda > 0$, then equation~\eqref{eq:taux} in Theorem~\ref{th:taux} holds for all $x \in \R$ (and not just for almost all $x \in \R$).
\end{proposition}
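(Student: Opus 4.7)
The plan is to show that both sides of~\eqref{eq:taux} are continuous functions of $x \in \R$ for each fixed $t > 0$ and $n \ge 0$; combined with the almost-everywhere equality supplied by Theorem~\ref{th:taux}, this yields equality for every $x$.

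For the right-hand side, set
\formula{
 R_n(t, x) & = \frac{1}{\pi} \int_0^\infty \cos \thet_\lambda \, e^{-t \Psi(\lambda)} \Psi'(\lambda) \Psi(\lambda)^{n-1} F_\lambda(x) \D \lambda .
}
For each $\lambda > 0$ the function $x \mapsto F_\lambda(x) = \sin(\lambda \tabs{x} + \thet_\lambda) - G_\lambda(x)$ is continuous (since $G_\lambda \in C_0(\R)$), so continuity of $R_n(t, \cdot)$ will follow from the Dominated Convergence Theorem once an integrable-in-$\lambda$ dominant is found uniformly for $x$ in bounded sets. Two bounds on $F_\lambda$ come into play: the trivial one $\tabs{F_\lambda(x)} \le 1 + G_\lambda(0) = 1 + \sin \thet_\lambda \le 2$ (using $\fourier G_\lambda \ge 0$ together with $G_\lambda(0) = \sin \thet_\lambda$, which follows from~\eqref{eq:thetalambda} via $G_\lambda(0) = \tfrac{1}{2\pi} \int \fourier G_\lambda$); and, for $\lambda \tabs{x} < 2$, the upper bound of Lemma~\ref{lem:flambdaest},
\formula{
 \tabs{F_\lambda(x)} & \le \frac{4}{\pi} \int_{2/\tabs{x}}^\infty \frac{2 \lambda \psi'(\lambda^2)}{\psi(\xi^2) - \psi(\lambda^2)} \, \D \xi ,
}
which supplies the smallness of $F_\lambda$ near $\lambda = 0$.

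For $n \ge 1$ the trivial bound already produces an integrable dominant, since the substitution $u = \Psi(\lambda)$ gives $\int_0^\infty e^{-t \Psi(\lambda)} \Psi'(\lambda) \Psi(\lambda)^{n-1} \D \lambda = \Gamma(n) / t^n$. For $n = 0$ the factor $1 / \Psi(\lambda)$ is singular at $\lambda = 0$, and Lemma~\ref{lem:flambdaest} becomes essential: using $\Psi'(\lambda) = 2 \lambda \psi'(\lambda^2)$, the estimate $\tabs{F_\lambda(x)} \lesssim \lambda \psi'(\lambda^2)$ near $\lambda = 0$ cancels the singular factor $\Psi'(\lambda) / \Psi(\lambda)$, yielding an integrable contribution. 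Uniformity of this bound for $x \in [-x_0, x_0]$ is obtained by replacing $\int_{2/\tabs{x}}^\infty \D \xi / \psi(\xi^2)$ by the larger quantity $\int_{2/x_0}^\infty \D \xi / \psi(\xi^2)$, which is finite by the integrability hypothesis on $1/(1 + \psi(\xi^2))$. For the left-hand side at $n = 0$, Proposition~\ref{prop:tauxcont} gives continuity of $\pr(t < \tau_x < \infty)$ in $x$, and combined with continuity of $R_0(t, \cdot)$ and the a.e.\ equality of Theorem~\ref{th:taux}, this establishes~\eqref{eq:taux} for every $x \in \R$ when $n = 0$.

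Finally, for $n \ge 1$, I differentiate the $n = 0$ identity $n$ times in $t$. Differentiation under the integral is justified by the same dominated-convergence argument: each $t$-derivative supplies an additional factor of $-\Psi(\lambda)$, moving the integrand from the $n = 0$ regime into the integrable $n \ge 1$ regime already handled. This shows that $\pr(t < \tau_x < \infty)$ is infinitely differentiable in $t$ for every $x \in \R$, and that its $n$-th time derivative equals $R_n(t, x)$. The main technical obstacle is producing the integrable dominant in the $n = 0$ case near $\lambda = 0$, where the singular factor $\Psi'(\lambda) / \Psi(\lambda)$ must be balanced against the smallness of $F_\lambda(x)$ from Lemma~\ref{lem:flambdaest}, with control uniform in $x$ on bounded sets.
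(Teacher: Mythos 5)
Your argument is correct and follows essentially the same route as the paper: both proofs establish continuity of each side of~\eqref{eq:taux} in $x$ on $\R \setminus \{0\}$ (with $x = 0$ handled trivially since both sides vanish there), using Proposition~\ref{prop:tauxcont} for the left-hand side and the Dominated Convergence Theorem for the right-hand side, with the dominating function obtained from the trivial bound $\tabs{F_\lambda(x)} \le 2$ away from $\lambda = 0$ and from Lemma~\ref{lem:flambdaest} near $\lambda = 0$, and both reduce $n \ge 1$ to $n = 0$ by differentiation in $t$. One small imprecision: you silently replace the denominator $\psi(\xi^2) - \psi(\lambda^2)$ appearing in Lemma~\ref{lem:flambdaest} by $\psi(\xi^2)$, which needs a word of justification (for $\lambda < 1/x_0$ and $\xi \ge 2/x_0$ one has $\psi(\xi^2) - \psi(\lambda^2) \ge \bigl(1 - \psi(1/x_0^2)/\psi(4/x_0^2)\bigr)\psi(\xi^2)$); the paper instead keeps the shifted denominator $\psi(\xi^2) - \psi(1/b^2)$ in its dominating function, which is cleaner.
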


\begin{proof}
It suffices to consider $n = 0$, the result for $n > 0$ follows then by differentiation, see~\cite[Remark~1.2]{bib:k12}. Let $t > 0$. By Proposition~\ref{prop:tauxcont}, the left-hand side of~\eqref{eq:taux} is a continuous function of $x \in \R \setminus \{0\}$. For each $t > 0$, the integrand in the right-hand side of~\eqref{eq:taux} is continuous in $x \in \R \setminus \{0\}$. Therefore, it remains to show that \revisiontwo{the Dominated Convergence Theorem} 
can be applied to prove continuity of the right-hand side of~\eqref{eq:taux} in $x > 0$ (equality for $x = 0$ is trivial, and the result for $x < 0$ follows by symmetry).

Fix $[a, b] \sub (0, \infty)$. By Lemma~\ref{lem:flambdaest}, for $x \in [a, b]$ and $\lambda \in (0, \tfrac{1}{b})$,
\formula{
 \abs{\cos \thet_\lambda e^{-t \psi(\lambda^2)} 2 \lambda \psi'(\lambda^2) (\psi(\lambda^2))^{-1} F_\lambda(x)} & \le \frac{4}{\pi} \, \frac{(2 \lambda \psi'(\lambda^2))^2}{\psi(\lambda^2)} \int_{\frac{2}{b}}^\infty \frac{1}{\psi(\xi^2) - \psi(1 / b^2)} \, \D \xi ,
}
while for $x \in [a, b]$ and $\lambda \ge \tfrac{1}{b}$,
\formula{
 \abs{\cos \thet_\lambda e^{-t \psi(\lambda^2)} 2 \lambda \psi'(\lambda^2) (\psi(\lambda^2))^{-1} F_\lambda(x)} & \le \frac{4 \lambda \psi'(\lambda^2)}{\psi(\lambda^2)} \, e^{-t \psi(\lambda^2)} ,
}
because $\tabs{F_\lambda(x)} \le 2$ (indeed, $\tabs{F_\lambda(x)} \le 1 + \tabs{G_\lambda(x)}$; since $\fourier G_\lambda(\xi) \ge 0$ for all $\xi \in \R$, one has $\tabs{G_\lambda(x)} \le G_\lambda(0)$; finally, $G_\lambda(0) = \sin \thet_\lambda \le 1$; see~\cite[Theorem~1.9(a)]{bib:k12}). Clearly,
\formula{
 \int_{\frac{2}{b}}^\infty \frac{4 \lambda \psi'(\lambda^2)}{\psi(\lambda^2)} \, e^{-t \psi(\lambda^2)} \D \lambda & = \int_{\psi(4 / b^2)}^\infty \frac{2 e^{-t s}}{s} \, \D s < \infty .
}
Furthermore, $\lambda^2 \psi'(\lambda^2) / \psi(\lambda^2) = 1 / \psi_\lambda(\lambda^2) \le 1 / \psi_\lambda(0) = 1$, and therefore
\formula{
 & \int_0^{\frac{2}{b}} \expr{\frac{4}{\pi} \, \frac{(2 \lambda \psi'(\lambda^2))^2}{\psi(\lambda^2)} \int_{\frac{2}{b}}^\infty \frac{1}{\psi(\xi^2) - \psi(1 / b^2)} \, \D \xi} \D \lambda \\
 & \hspace*{8em} \le \frac{16}{\pi} \int_0^{\frac{2}{b}} \psi'(\lambda^2) \D \lambda \int_{\frac{2}{b}}^\infty \frac{1}{\psi(\xi^2) - \psi(1 / b^2)} \, \D \xi < \infty ,
}
which completes the proof.
\end{proof}

By Proposition~\ref{prop:everywhere}, under appropriate assumptions, for $n \ge 0$, $t > 0$ and $x \in \R \setminus \{0\}$,
\formula{
 \expr{-\frac{\D}{\D t}}^n \pr(t < \tau_x < \infty) & = \frac{2}{\pi} \int_0^\infty \cos \thet_\lambda e^{-t \psi(\lambda^2)} \lambda \psi'(\lambda^2) (\psi(\lambda^2))^{n-1} F_\lambda(x) \D \lambda .
}
Throughout this section we denote
\formula[eq:ij]{
 I_n(t, x, a) & = \frac{2}{\pi} \int_a^\infty \cos \thet_\lambda e^{-t \psi(\lambda^2)} \lambda \psi'(\lambda^2) (\psi(\lambda^2))^{n-1} F_\lambda(x) \D \lambda , \\
 J_n(t, x, a) & = \frac{2}{\pi} \int_0^a \cos \thet_\lambda e^{-t \psi(\lambda^2)} \lambda \psi'(\lambda^2) (\psi(\lambda^2))^{n-1} F_\lambda(x) \D \lambda .
}

\revisionone{In the remaining part of the article, $\gammal(k; z)$ and $\gammau(k; z)$ denote the lower and the upper incomplete gamma functions, respectively.}

\begin{lemma}
\label{lem:iest:reg}
If $\psi(\xi^2)$ is the L\'evy--Khintchine exponent of a symmetric L\'evy process, $1 / (1 + \psi(\xi^2))$ is integrable, $(\psi_\lambda)_\lambda(\xi)$ is well-defined and $(\psi_\lambda)_\lambda(\xi)$ and $\xi / (\psi_\lambda)_\lambda(\xi)$ are increasing in $\xi > 0$ for all $\lambda > 0$, and \revisionone{the scaling-type condition~\eqref{eq:reg} holds} for some $\alpha, \beta \in (1, 2]$ and all $\xi > 0$, then
\formula{
 \tabs{I_n(t, x, a)} & \le \tfrac{2}{\pi} t^{-n} \gammau(n; (\alpha - 1)^\beta t \psi(1 / x^2))
}
for all $n \ge 0$, $t, x > 0$ and $a \ge (\pi - \tfrac{\pi}{\alpha}) / x$.
\end{lemma}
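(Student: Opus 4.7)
The plan is to prove this bound by a straightforward substitution after replacing the oscillatory factors in the integrand by crude uniform bounds. The key observation is that on the tail $\lambda \ge a$ the exponential decay $e^{-t \psi(\lambda^2)}$ already does all the work, so one does not need any finer control on $F_\lambda(x)$ than the trivial bound; the hypothesis $a \ge (\pi - \pi/\alpha)/x$ will only be used in the final step, to translate the cutoff $a$ into the quantity $\psi(1/x^2)$.

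First I would invoke $\tabs{\cos \thet_\lambda} \le 1$ and $\tabs{F_\lambda(x)} \le 2$; the latter follows from $F_\lambda(x) = \sin(\lambda \tabs{x} + \thet_\lambda) - G_\lambda(x)$ combined with $\tabs{G_\lambda(x)} \le G_\lambda(0) = \sin \thet_\lambda \le 1$, exactly as in the proof of Proposition~\ref{prop:everywhere}. This reduces the task to estimating $\tfrac{4}{\pi} \int_a^\infty e^{-t \psi(\lambda^2)} \lambda \psi'(\lambda^2) (\psi(\lambda^2))^{n-1} \D \lambda$.

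The natural substitution is $s = t \psi(\lambda^2)$, under which $\D s = 2 t \lambda \psi'(\lambda^2) \D \lambda$ and $(\psi(\lambda^2))^{n-1} = (s / t)^{n-1}$. This immediately converts the remaining integral into $\tfrac{1}{2 t^n} \gammau(n; t \psi(a^2))$, yielding the tentative bound $\tfrac{2}{\pi t^n} \gammau(n; t \psi(a^2))$.

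The final step is to replace $\psi(a^2)$ by a constant multiple of $\psi(1 / x^2)$. Since $\gammau(n; \cdot)$ is decreasing in its second argument, it suffices to verify $\psi(a^2) \ge (\alpha - 1)^\beta \psi(1 / x^2)$. Lemma~\ref{l:ratioest} gives $\psi(a^2) / \psi(1 / x^2) \ge (a x)^\alpha$, and the hypothesis $a x \ge \pi - \pi / \alpha = \pi (\alpha - 1) / \alpha$, combined with $\pi \ge 2 \ge \alpha$, yields $a x \ge \alpha - 1$. Hence $(a x)^\alpha \ge (\alpha - 1)^\alpha \ge (\alpha - 1)^\beta$, where the last inequality uses $\alpha - 1 \in (0, 1]$ and $\alpha \le \beta$ (the latter being forced by the two-sided form of~\eqref{eq:reg}). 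I do not foresee any substantial obstacle; the proof amounts to one substitution plus an elementary chain of inequalities, and the apparent wastefulness of bounding $\tabs{F_\lambda(x)}$ by the constant $2$ is harmless precisely because exponential decay dominates on the tail $\lambda > a$.
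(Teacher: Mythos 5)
Your overall strategy matches the paper's exactly: bound $\lvert F_\lambda(x)\rvert \le 2$ and $\lvert\cos\thet_\lambda\rvert\le 1$, substitute $s = t\psi(\lambda^2)$ to get an incomplete gamma integral, and then relate $\psi(a^2)$ to $\psi(1/x^2)$ via the scaling inequalities of Lemma~\ref{l:ratioest}. However, the last step has a genuine error. You claim that Lemma~\ref{l:ratioest} gives $\psi(a^2)/\psi(1/x^2) \ge (ax)^\alpha$, but that inequality only follows from~\eqref{eq:ratioest:3} when $a^2 \ge 1/x^2$, i.e. $ax \ge 1$. The hypothesis guarantees only $ax \ge \pi - \tfrac{\pi}{\alpha}$, which is strictly less than $1$ whenever $\alpha < \tfrac{\pi}{\pi - 1} \approx 1.47$, so the case $ax < 1$ genuinely occurs. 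In that case, Lemma~\ref{l:ratioest} (applied with $\xi_1 = a^2 < \xi_2 = 1/x^2$) gives the \emph{weaker} bound $\psi(a^2)/\psi(1/x^2) \ge (ax)^\beta$, not $(ax)^\alpha$; and indeed for $\psi(\xi) = \xi^{\beta/2}$ with $\beta > \alpha$ and $ax < 1$ one has $\psi(a^2)/\psi(1/x^2) = (ax)^\beta < (ax)^\alpha$, so your claimed intermediate inequality is simply false. The paper handles this by splitting into the two cases $\alpha > \tfrac{\pi}{\pi-1}$ (where $a_0 x \ge 1$ and monotonicity alone suffices) and $\alpha \le \tfrac{\pi}{\pi-1}$ (where the $\beta$-exponent is used). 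The error in your argument is repairable — in the case $ax < 1$ the correct bound $(ax)^\beta \ge (\alpha-1)^\beta$ still delivers the stated conclusion, because $ax \ge \pi - \tfrac{\pi}{\alpha} \ge \alpha - 1$ — but as written the chain of inequalities passes through a false statement, and the detour through $(\alpha-1)^\alpha \ge (\alpha-1)^\beta$ is a red herring.
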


\begin{proof}
Fix $t, x > 0$ and let $a_0 = (\pi - \tfrac{\pi}{\alpha}) / x$ and $b_0 = t \psi(a_0^2)$. Using $\tabs{F_\lambda(x)} \le 2$ (see the proof of Proposition~\ref{prop:everywhere}) and a substitution $s = t \psi(\lambda^2)$, one finds that
\formula{
 \tabs{I_n(t, x, a)} & \le \frac{4}{\pi} \int_{a_0}^\infty e^{-t \psi(\lambda^2)} \lambda \psi'(\lambda^2) (\psi(\lambda^2))^{n-1} \D \lambda = \frac{2}{\pi t^n} \int_{b_0}^\infty e^{-s} s^{n-1} \D s = \frac{2 \gammau(n; b_0)}{\pi t^n} \, .
}
Furthermore, $b_0 = t \psi(a_0^2) \ge t \psi(1/x^2)$ if $\alpha > \tfrac{\pi}{\pi - 1}$, and $b_0 = t \psi(a_0^2) \ge (\pi - \tfrac{\pi}{\alpha})^\beta t \psi(1/x^2)$ otherwise (by \revisionone{Lemma}~\ref{l:ratioest}). In either case, $b_0 \ge (\alpha - 1)^\beta t \psi(1 / x^2)$.
\end{proof}

\begin{lemma}
\label{lem:jest:reg}
If $\psi(\xi^2)$ is the L\'evy--Khintchine exponent of a symmetric L\'evy process, $(\psi_\lambda)_\lambda(\xi)$ and $\xi / (\psi_\lambda)_\lambda(\xi)$ are increasing in $\xi > 0$ for all $\lambda > 0$, and \revisionone{the scaling-type condition~\eqref{eq:reg} holds} for some $\alpha, \beta \in (1, 2]$ and all $\xi > 0$, then there are constants $c_1(\alpha, \beta, n), c_2(\alpha, \beta, n) > 0$ such that
\formula{
 \frac{c_1(\alpha, \beta, n)}{t^{n+1} x \psi(1/x^2) \sqrt{\psi^{-1}(1/t)}} & \le J_n(t, x, (\pi - \tfrac{\pi}{\alpha}) / x) \le \frac{c_2(\alpha, \beta, n)}{t^{n+1} x \psi(1/x^2) \sqrt{\psi^{-1}(1/t)}}
}
for $n \ge 0$ and $t, x > 0$ such that $t \psi(1/x^2) \ge 1$. Here
\formula[eq:jest:reg:3]{
 c_1(\alpha, \beta, n) & = \frac{(\alpha - 1)^2 \gammal(n + 1 - \tfrac{1}{\beta}; (\alpha - 1)^\beta)}{2 \pi^2} \, , \\
 c_2(\alpha, \beta, n) & = \frac{40 (\gammal(n + 1 - \tfrac{1}{\alpha}; 1) + \gammau(n + 1 - \tfrac{1}{\beta}; 1))}{\pi^2 (\alpha - 1)} \, .
}
\end{lemma}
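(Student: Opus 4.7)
The strategy is to combine the pointwise estimates of $F_\lambda(x)$ from Lemma~\ref{lem:flambdaest:reg} with a change of variables $s = t\psi(\lambda^2)$ in order to reduce $J_n(t,x,a)$ to a gamma-type integral. The integration range $(0, a) = (0, (\pi - \tfrac{\pi}{\alpha})/x)$ is precisely where $\lambda x < \pi - \tfrac{\pi}{\alpha}$, so Lemma~\ref{lem:flambdaest:reg} provides two-sided bounds for $F_\lambda(x)$ by constant multiples of $\lambda \psi'(\lambda^2)/(x \psi(1/x^2))$. Lemma~\ref{lem:thetalambdaest:reg} gives $\cos\thet_\lambda \in [\sin(\pi/\alpha), \sin(\pi/\beta)]$, and the elementary inequality
\[
\sin(\pi/\alpha) = \sin(\pi(\alpha-1)/\alpha) \ge 2(\alpha-1)/\alpha \ge \alpha - 1,
\]
valid for $\alpha \in [1, 2]$ by concavity of sine on $[0, \pi/2]$, furnishes a clean uniform lower bound. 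Substituting these pointwise bounds into the definition of $J_n$ reduces the whole problem to two-sided estimation, up to $\alpha$- and $\beta$-dependent constants, of
\[
\frac{1}{x\psi(1/x^2)} \int_0^a \lambda^2 (\psi'(\lambda^2))^2 (\psi(\lambda^2))^{n-1} e^{-t\psi(\lambda^2)} \, d\lambda.
\]

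Next I would substitute $s = t\psi(\lambda^2)$, under which $\lambda = \sqrt{\psi^{-1}(s/t)}$ and $ds = 2 t \lambda \psi'(\lambda^2)\, d\lambda$. The integrand then rewrites as $\frac{1}{2 \lambda t^{n+1}} \cdot \frac{\lambda^2 \psi'(\lambda^2)}{\psi(\lambda^2)} \cdot s^n e^{-s}\, ds$. By Lemma~\ref{l:ratioest}, the middle factor lies in $[\alpha/2, \beta/2]$, so up to absolute constants the integral above equals
\[
\frac{c}{t^{n+1}} \int_0^{b_0} \frac{s^n e^{-s}}{\sqrt{\psi^{-1}(s/t)}} \, ds, \qquad b_0 := t\psi(a^2).
\]
The scaling inequality~\eqref{eq:invest:2} now controls $\sqrt{\psi^{-1}(s/t)}$ in terms of $\sqrt{\psi^{-1}(1/t)}$: for $s \le 1$ one obtains $1/\sqrt{\psi^{-1}(s/t)} \le s^{-1/\alpha}/\sqrt{\psi^{-1}(1/t)}$ (with $\alpha$ and $\beta$ swapped for the reverse inequality), and for $s \ge 1$ the roles of $\alpha$ and $\beta$ are exchanged. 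For the upper bound I would extend the upper limit of integration to infinity and split the result at $s = 1$, yielding the estimate $(\gammal(n + 1 - \tfrac{1}{\alpha}; 1) + \gammau(n + 1 - \tfrac{1}{\beta}; 1))/\sqrt{\psi^{-1}(1/t)}$.

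For the lower bound I would restrict the integration to $s \in (0, (\alpha - 1)^\beta)$, which lies inside the range $s \le 1$, and apply the opposite scaling inequality to arrive at $\gammal(n + 1 - \tfrac{1}{\beta}; (\alpha-1)^\beta)/\sqrt{\psi^{-1}(1/t)}$. This restriction is legitimate because $b_0 \ge (\alpha - 1)^\beta$ whenever $t\psi(1/x^2) \ge 1$, which follows from the estimate $\psi(a^2) \ge (\alpha-1)^\beta \psi(1/x^2)$, itself a short consequence of Lemma~\ref{l:ratioest} applied to $a = (\pi - \tfrac{\pi}{\alpha})/x$ (essentially as in the proof of Lemma~\ref{lem:iest:reg}). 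The main difficulty is not conceptual but purely bookkeeping: keeping track of the factor $2/\pi$ from the definition of $J_n$, the $\alpha$- and $\beta$-dependent factors from Lemmas~\ref{lem:flambdaest:reg},~\ref{lem:thetalambdaest:reg} and~\ref{l:ratioest}, and verifying that they assemble into precisely the expressions in~\eqref{eq:jest:reg:3}.
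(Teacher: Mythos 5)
Your proposal follows essentially the same route as the paper's proof: apply Lemma~\ref{lem:flambdaest:reg} on $(0,a)$ together with $\cos\thet_\lambda\ge\sin\tfrac{\pi}{\alpha}\ge\alpha-1$ and $\lambda^2\psi'(\lambda^2)/\psi(\lambda^2)\in[\tfrac{\alpha}{2},\tfrac{\beta}{2}]$, substitute $s=t\psi(\lambda^2)$, control $\sqrt{\psi^{-1}(s/t)}$ via~\eqref{eq:invest:2}, split the resulting gamma-type integral at $s=1$ for the upper bound, and restrict to $s<(\alpha-1)^\beta$ for the lower bound after checking $b\ge(\alpha-1)^\beta$. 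The argument is correct and matches the paper in all essential steps.
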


As before, \revisionone{the scaling-type condition~\eqref{eq:reg}} implies that $1 / (1 + \psi(\xi^2))$ is integrable (by \revisionone{Lemma}~\ref{l:ratioest}) and that $(\psi_\lambda)_\lambda(\xi)$ is well-defined.

\begin{proof}
Fix $t, x > 0$ and let $a = (\pi - \tfrac{\pi}{\alpha}) / x$ and $b = t \psi(a^2)$. Denote $J = J_n(t, x, a)$. Observe that when $\lambda < a$, then $\lambda x < \pi - \tfrac{\pi}{\alpha}$ and Lemma~\ref{lem:flambdaest:reg} applies. Hence,
\formula{
 J & \ge \frac{2 (\alpha - 1)}{\pi^2} \int_0^a \cos \thet_\lambda e^{-t \psi(\lambda^2)} \lambda \psi'(\lambda^2) (\psi(\lambda^2))^{n-1} \, \frac{\lambda \psi'(\lambda^2)}{x \psi(1/x^2)} \, \D \lambda .
}
Using $\cos \thet_\lambda \ge \cos(\tfrac{\pi}{\alpha} - \tfrac{\pi}{2}) \ge (\alpha - 1)$, $\lambda^2 \psi'(\lambda^2) \ge \tfrac{\alpha}{2} \psi(\lambda^2)$ (by \revisionone{Lemma}~\ref{l:ratioest}) and a substitution $s = t \psi(\lambda^2)$,
\formula{
 J & \ge \frac{\alpha (\alpha - 1)^2}{\pi^2 x \psi(1/x^2)} \int_0^a e^{-t \psi(\lambda^2)} \psi'(\lambda^2) (\psi(\lambda^2))^n \D \lambda = \frac{\alpha (\alpha - 1)^2}{\pi^2 t^{n+1} x \psi(1/x^2)} \int_0^b \frac{e^{-s} s^n}{2 \sqrt{\psi^{-1}(s/t)}} \, \D s .
}
By \revisionone{Lemma}~\ref{l:ratioest} and~\revisionone{\eqref{eq:invest:2}},
\formula{
 J & \ge \frac{\alpha (\alpha - 1)^2}{2 \pi^2 t^{n+1} x \psi(1/x^2)} \int_0^b \frac{e^{-s} s^n}{\sqrt{\max(s^{2/\beta}, s^{2/\alpha}) \psi^{-1}(1/t)}} \, \D s \\
 & \ge \frac{\alpha (\alpha - 1)^2 \gammal(n + 1 - \tfrac{1}{\beta}; \min(b, 1))}{2 \pi^2 t^{n+1} x \psi(1/x^2) \sqrt{\psi^{-1}(1/t)}} \, .
}
Finally, as in the proof of Lemma~\ref{lem:iest:reg}, $b \ge (\alpha - 1)^\beta t \psi(1/x^2)$. This proves the desired lower bound. The upper bound is shown in a similar manner,
\formula{
 J & \le \frac{80}{\pi^2 (\alpha - 1)} \int_0^a \cos \thet_\lambda e^{-t \psi(\lambda^2)} \lambda \psi'(\lambda^2) (\psi(\lambda^2))^{n-1} \, \frac{\lambda \psi'(\lambda^2)}{x \psi(1/x^2)} \, \D \lambda \\
 & \le \frac{40 \beta}{\pi^2 (\alpha - 1) x \psi(1/x^2)} \int_0^a e^{-t \psi(\lambda^2)} \psi'(\lambda^2) (\psi(\lambda^2))^n \D \lambda \\
 & = \frac{40 \beta}{\pi^2 (\alpha - 1) t^{n+1} x \psi(1/x^2)} \int_0^b \frac{e^{-s} s^n}{2 \sqrt{\psi^{-1}(s/t)}} \, \D s \\
 & \le \frac{20 \beta}{\pi^2 (\alpha - 1) t^{n+1} x \psi(1/x^2)} \int_0^b \frac{e^{-s} s^n}{\sqrt{\min(s^{2/\beta}, s^{2/\alpha}) \psi^{-1}(1/t)}} \, \D s \\
 & \le \frac{20 \beta (\gammal(n + 1 - \tfrac{1}{\alpha}; 1) + \gammau(n + 1 - \tfrac{1}{\beta}; 1))}{\pi^2 (\alpha - 1) t^{n+1} x \psi(1/x^2) \sqrt{\psi^{-1}(1/t)}} \, . \qedhere
}
\end{proof}

As observed in \revisionone{the introduction}, with the hypotheses of Theorems~\ref{th:tauxest:reg} and~\ref{th:tauxasymp:reg}, $\psi(\xi) = \Psi(\sqrt{\xi})$ is a complete Bernstein function (see~\cite{bib:ssv10}), and hence $\psi_\lambda$ and $(\psi_\lambda)_\lambda$ are complete Bernstein functions (see~\cite{bib:k12}). In particular, $(\psi_\lambda)_\lambda$ is well-defined and $(\psi_\lambda)_\lambda(\xi)$ and $\xi / (\psi_\lambda)_\lambda(\xi)$ are increasing in $\xi > 0$ for all $\lambda > 0$. Furthermore, if $\psi$ is regularly varying at zero or at infinity with index $\tfrac{\alpha}{2}$, then $\psi'$ is regularly varying at the same point with index $\tfrac{\alpha}{2} - 1$. \revisionone{The scaling-type condition~\eqref{eq:reg:1} implies the upper bound of~\eqref{eq:reg}, and the lower bound is automatically satisfied with $\beta = 2$.} Finally, $\Psi^{-1}(t) = (\psi^{-1}(t))^{1/2}$, and the relation between the derivatives of $\psi$ and $\Psi$ is given in~\eqref{eq:psipsi}.

Observe that the distributions of $\tau_x$ and $\tau_{-x}$ are equal, and $F_\lambda$ are even functions. Hence, only $x > 0$ needs to be considered in the proofs of main theorems.

\begin{proof}[Proof of Theorem~\ref{th:tauxest:reg}]
\revisionone{Let $\beta = 2$.} Choose $c > 1$ large enough, so that for $s \ge c$,
\formula{
 \tfrac{2}{\pi} s \, \gammau(n; (\alpha - 1)^\beta s) & \le \tfrac{1}{2} c_1(\alpha, \beta, n) ,
}
where $c_1(\alpha, \beta, n)$ is defined in~\eqref{eq:jest:reg:3} in Lemma~\ref{lem:jest:reg} (this is possible, because $\gammau(n; (\alpha - 1)^\beta s)$ decays exponentially fast with $s$ at infinity). Fix $t, x > 0$ such that $t \psi(1 / x^2) \ge c$, and let $a = (\pi - \tfrac{\pi}{\alpha}) / x$. Observe that
\formula{
 x^2 \psi^{-1}(\tfrac{1}{t}) & = \frac{\psi^{-1}(1 / t)}{\psi^{-1}(\psi(1 / x^2))} \le \frac{\psi^{-1}(1 / t)}{\psi^{-1}(c / t)} \le 1 .
}
Hence, by Lemmas~\ref{lem:iest:reg} and~\ref{lem:jest:reg}, if $t \psi(1 / x^2) \ge c$, then
\formula{
 t^n \tabs{I_n(t, x, a)} & \le \tfrac{2}{\pi} \gammau(n; (\alpha - 1)^\beta t \psi(1 / x^2)) \le \frac{c_1(\alpha, \beta, n)}{2 t \psi(1 / x^2)} \, , \\
 t^n J_n(t, x, a) & \ge \frac{c_1(\alpha, \beta, n)}{t x \psi(1 / x^2) \sqrt{\psi^{-1}(1/t)}} \ge \frac{c_1(\alpha, \beta, n)}{t \psi(1 / x^2)} \, ,
}
so that $\tabs{I_n(t, x, a)} \le \tfrac{1}{2} J_n(t, x, a)$. It follows that
\formula{
 \tfrac{1}{2} J_n(t, x, a) & \le \expr{-\frac{\D}{\D t}}^n \pr(t < \tau_x < \infty) \le \tfrac{3}{2} J_n(t, x, a) ,
}
and the theorem follows now directly from Lemma~\ref{lem:jest:reg}, with $C_1(\alpha, n) = \tfrac{1}{2} c_1(\alpha, \beta, n)$, $C_2(\alpha, n) = \tfrac{3}{2} c_2(\alpha, \beta, n)$ (see~\eqref{eq:jest:reg:3} in Lemma~\ref{lem:jest:reg}) and $C_3(\alpha, n) = c$.
\end{proof}

\begin{proof}[Proof of Corollary~\ref{cor:nzero}]
For brevity, denote the constants of Theorem~\ref{th:tauxest:reg} by $C_j = C_j(\alpha, 0)$ for $j = 1, 2, 3$; recall that $C_3 \ge 1$. Suppose first that $t \psi(1 / x^2) \ge C_3$. By~\revisionone{\eqref{eq:invest:2}},
\formula{
 t x \psi(1 / x^2) \sqrt{\psi^{-1}(1/t)} & = t \psi(1/x^2) \sqrt{\frac{\psi^{-1}(1/t)}{\psi^{-1}(\psi(1 / x^2))}} \ge \frac{t \psi(1 / x^2)}{(t \psi(1 / x^2))^{1 / \alpha}} \ge C_3^{1 - 1/\alpha} \ge 1 .
}
Hence, estimate~\eqref{eq:tauxest:reg:5} follows from~\eqref{eq:tauxest:reg:2} with arbitrary $\tilde{C}_1(\alpha) \le C_1$ and $\tilde{C}_2(\alpha) \ge 2 C_2$. Consider now the case $t \psi(1 / x^2) \le C_3$. Again by~\revisionone{\eqref{eq:invest:2}},
\formula{
 t x \psi(1 / x^2) \sqrt{\psi^{-1}(1/t)} & \le t \psi(1/x^2) \sqrt{\frac{\psi^{-1}(C_3/t)}{\psi^{-1}(\psi(1 / x^2))}} \le \frac{t \psi(1 / x^2)}{(t \psi(1 / x^2) / C_3)^{1 / \alpha}} \le C_3 .
}
Hence,
\formula{
 \pr(\tau_x > t) & \le 1 \le \frac{2C_3}{1 + t x \psi(1 / x^2) \sqrt{\psi^{-1}(1/t)}} \, .
}
Finally, by~\eqref{eq:tauxest:reg:2},
\formula{
 \pr(\tau_x > t) & \ge \pr(\tau_x > C_3 / \psi(1 / x^2)) \ge \frac{C_1}{C_3 x \sqrt{\psi^{-1}(\psi(1 / x^2) / C_3)}} \\
 & = \frac{C_1}{C_3} \sqrt{\frac{\psi^{-1}(1 / x^2)}{\psi^{-1}(\psi(1 / x^2) / C_3)}} \ge \frac{C_1}{C_3} \ge \frac{C_1}{C_3} \, \frac{1}{1 + t x \psi(1 / x^2) \sqrt{\psi^{-1}(1/t)}} \, .
}
Therefore, \eqref{eq:tauxest:reg:2} holds with arbitrary $\tilde{C}_1(\alpha) \le C_1 / C_3$ and $\tilde{C}_2(\alpha) \ge 2 C_3$.
\end{proof}

\begin{remark}
\label{rem:const}
\revisionone{From the proof of Theorem~\ref{th:tauxest:reg} it follows that the constants in this result are given by
\formula{
 C_1(\alpha, n) & = \frac{(\alpha - 1)^2 \gammal(n + \tfrac{1}{2}; (\alpha - 1)^2)}{4 \pi^2} \, , \\
 C_2(\alpha, n) & = \frac{60 (\gammal(n + 1 - \tfrac{1}{\alpha}; 1) + \gammau(n + \tfrac{1}{2}; 1))}{\pi^2 (\alpha - 1)} \, ,
}
and $C_3(\alpha, n) > 1$ is large enough, so that for $s \ge C_3(\alpha, n)$,
\formula{
 \tfrac{2}{\pi} s \, \gammau(n; (\alpha - 1)^2 s) & \le C_1(\alpha, n) .
}
In a similar way, in Corollary~\ref{cor:nzero},
\formula{
 \tilde{C}_1(\alpha) & = \frac{C_1(\alpha, 0)}{C_3(\alpha, 0)} , & \tilde{C}_2(\alpha) & = 2 C_2(\alpha, 0) + 2 C_3(\alpha, 0) .
}
}
\end{remark}

\begin{proof}[Proof of Theorem~\ref{th:tauxasymp:reg}]
\emph{Part~\ref{th:tauxasymp:reg:a}.}
\revisionone{As before, let $\beta = 2$.} We claim that by \revisiontwo{the Dominated Convergence Theorem}
,
\formula{
 & \lim_{x \to 0^+} (x \psi(1 / x^2) J_n(t, x, \tfrac{2}{x})) \\
 & \hspace*{3em} = \frac{2}{\pi \Gamma(\gamma) \tabs{\cos \tfrac{\gamma \pi}{2}}} \int_0^\infty (\cos \thet_\lambda)^2 e^{-t \psi(\lambda^2)} \lambda^2 (\psi'(\lambda^2))^2 (\psi(\lambda^2))^{n-1} \D \lambda
}
for all $n \ge 0$ and $t > 0$. Indeed, the left-hand side is the limit of integrals (see~\eqref{eq:ij}), with integrands convergent pointwise to the integrand in the right-hand side by Lemma~\ref{lem:flambdareg}. Furthermore, by Lemma~\ref{lem:flambdaest:reg}, the integrands in the left-hand side are bounded by
\formula{
 \frac{80}{\pi (\alpha - 1)} \cos \thet_\lambda e^{-t \psi(\lambda^2)} \lambda^2 (\psi'(\lambda^2))^2 (\psi(\lambda^2))^{n-1} ,
}
which is easily shown to be integrable in $\lambda \in (0, \infty)$, because $\lambda^2 \psi'(\lambda^2) \le \tfrac{\beta}{2} \psi(\lambda^2)$. The claim is proved.

On the other hand, by Lemma~\ref{lem:iest:reg}, for $x \in (0, 1)$,
\formula{
 x \psi(1 / x^2) \tabs{I_n(t, x, \tfrac{2}{x})} & \le \frac{2 x \psi(1 / x^2) \gammau(n; (\alpha - 1)^\beta t \psi(1 / x^2))}{\pi t^n} \le c(\alpha, \beta, n, t) x .
}

\noindent\emph{Part~\ref{th:tauxasymp:reg:b}.}
\revisionone{Again let $\beta = 2$.} Fix $x > 0$ and $a = \tfrac{2}{x}$. Observe that
\formula{
 & t^{n+1} \sqrt{\psi^{-1}(1 / t)} J_n(t, x, a) \\
 & \hspace*{1em} = \frac{4}{\pi} \, t^{n+1} \sqrt{\psi^{-1}(1 / t)} \int_0^a e^{-t \psi(\lambda^2)} \psi'(\lambda^2) (\psi(\lambda^2))^n \, \frac{F_\lambda(x)}{2 \lambda \psi'(\lambda^2) \cos \thet_\lambda} \, (\cos \thet_\lambda)^2 \, \frac{\lambda^2 \psi'(\lambda^2)}{\psi(\lambda^2)} \, \D \lambda
}
for all $n \ge 0$ and $t > 0$. By Lemmas~\ref{lem:fzero} and~\ref{lem:thetalambda:reg}, and Karamata's theorem~\cite[Theorem~1.5.11]{bib:bgt87},
\formula{
 \lim_{\lambda \to 0^+} \frac{F_\lambda(x)}{2 \lambda \psi'(\lambda^2) \cos \thet_\lambda} & = v(x) , & \lim_{\lambda \to 0^+} \thet_\lambda & = \frac{\pi}{\delta} - \frac{\pi}{2} , & \lim_{\lambda \to 0^+} \frac{\lambda^2 \psi'(\lambda^2)}{\psi(\lambda^2)} & = \frac{\delta}{2} \, .
}
We claim that
\formula{
 \lim_{t \to \infty} \expr{\frac{4}{\pi} \, t^{n+1} \sqrt{\psi^{-1}(1 / t)} e^{-t \psi(\lambda^2)} \psi'(\lambda^2) (\psi(\lambda^2))^n \ind_{(0, a)}(\lambda) \D \lambda} & = \frac{2 \Gamma(n + 1 - \tfrac{1}{\delta})}{\pi} \, \delta_0(\D \lambda) ,
}
with the vague limit of measures in the left-hand side. Indeed, the density function converges 
to $0$ \revisiontwo{uniformly} on $[\eps, a)$ for every $\eps > 0$. Furthermore, by a substitution $s = t \psi(\lambda^2)$,
\formula{
 & \lim_{t \to \infty} \expr{\frac{4}{\pi} \, t^{n+1} \sqrt{\psi^{-1}(1 / t)} \int_0^a e^{-t \psi(\lambda^2)} \psi'(\lambda^2) (\psi(\lambda^2))^n \D \lambda} \\
 & \hspace*{3em} = \lim_{t \to \infty} \expr{\frac{2}{\pi} \int_0^{t \psi(a^2)} \sqrt{\frac{\psi^{-1}(1 / t)}{\psi^{-1}(s / t)}} \, e^{-s} s^n \D s} = \frac{2}{\pi} \int_0^{\infty} e^{-s} s^{n - 1/\delta} \D s = \frac{2 \Gamma(n + 1 - \tfrac{1}{\delta})}{\pi} \, ;
}
the second equality follows by \revisiontwo{the Dominated Convergence Theorem}
, because $\psi^{-1}$ is regularly varying at zero with index $\tfrac{2}{\delta}$, and $\psi^{-1}(1 / t) / \psi^{-1}(s / t) \le \max(s^{-2 / \alpha}, s^{-2 / \beta})$ for $s, t > 0$ by \revisionone{Lemma}~\ref{l:ratioest} and~\revisionone{\eqref{eq:invest:2}}. The claim is proved.

It follows that
\formula{
 \lim_{t \to \infty} \expr{t^{n+1} \sqrt{\psi^{-1}(1 / t)} J_n(t, x, a)} & = \tfrac{2}{\pi} \Gamma(n + 1 - \tfrac{1}{\delta}) v(x) (\cos(\tfrac{\pi}{\delta} - \tfrac{\pi}{2}))^2 \tfrac{\delta}{2} .
}
Finally, by Lemma~\ref{lem:iest:reg},
\formula{
 t^{n+1} \sqrt{\psi^{-1}(1 / t)} \tabs{I_n(t, x, a)} & \le \tfrac{2}{\pi} t \sqrt{\psi^{-1}(1 / t)} \, \gammau(n; (\alpha - 1)^\beta t \psi(1 / x^2)) ,
}
and the right-hand side converges to $0$ as $t \to \infty$.
\end{proof}

\begin{remark}
\label{rem:lim}
\revisionone{From the proof Theorem~\ref{th:tauxasymp:reg} it follows that in part~\ref{th:tauxasymp:reg:a},
\formula{
 & \lim_{x \to 0} \expr{\tabs{x} \Psi(\tfrac{1}{\tabs{x}}) (-\tfrac{\D}{\D t})^n \pr(\tau_x > t)} \\
 & \hspace*{3em} = \frac{1}{2 \pi \Gamma(\gamma) \tabs{\cos \tfrac{\gamma \pi}{2}}} \int_0^\infty (\cos \thet_\lambda)^2 e^{-t \Psi(\lambda)} (\Psi'(\lambda))^2 (\Psi(\lambda))^{n-1} \D \lambda ,
}
with $\thet_\lambda$ given by~\eqref{eq:theta-def}. Also, in part~\ref{th:tauxasymp:reg:b},
\formula{
 \lim_{t \to \infty} \expr{t^{n + 1} \Psi^{-1}(\tfrac{1}{t}) (-\tfrac{\D}{\D t})^n \pr(\tau_x > t)} & = \frac{\delta \Gamma(n + 1 - \tfrac{1}{\delta}) (\sin \tfrac{\pi}{\delta})^2}{\pi} \, v(x) \revisiontwo{,}
}
for all $n \ge 0$ and $x \in \R \setminus \{0\}$, where $v(x)$ is the compensated potential kernel of~$X$.}
\end{remark}

\begin{remark}
\revisionone{The proofs clearly indicate that} the hypotheses of \revisionone{Theorem~\ref{th:tauxest:reg}} can be slightly relaxed to the following: $\psi(\xi^2)$ is the L\'evy--Khintchine exponent of a symmetric L\'evy process; $1 / (1 + \psi(\xi^2))$ is integrable; $(\psi_\lambda)_\lambda(\xi)$ is well-defined and $(\psi_\lambda)_\lambda(\xi)$ and $\xi / (\psi_\lambda)_\lambda(\xi)$ are increasing in $\xi > 0$ for all $\lambda > 0$; \revisionone{scaling-type condition}~\eqref{eq:reg:1} holds for some $\alpha \in (1, 2]$ and all $\xi > 0$, \revisionone{and a similar upper bound $\xi \Psi''(\xi) / \Psi'(\xi) \le \beta - 1$ holds for some $\beta \in (1, 2]$ and all $\xi > 0$} (the upper bound is now non-trivial also for $\beta = 2$). Apparently, these conditions can be further weakened at the price of more technical arguments. Since many important examples already belong to the class considered in this article, we decided to focus on simplicity rather than complete generality.
\end{remark}

%
%

\section*{Acknowledgments}

We thank the anonymous referees for careful reading of the manuscript and numerous comments, which helped us improve the article.

%
%

%
%

\end{document}